\documentclass[11pt]{amsart}

\usepackage{amsmath,amssymb,latexsym,soul,cite,amsthm,color,enumitem,graphicx,tikz, mathtools,microtype}
\usepackage{amsfonts,amsthm,amssymb,marvosym,esint,xfrac,bbm,amssymb}

\usepackage[normalem]{ulem}

\usepackage[colorlinks=true, urlcolor=blue, citecolor=red, linkcolor=blue]{hyperref}
\usepackage[english]{babel}
\usepackage[left=2.5cm,right=2.5cm,top=2.7cm,bottom=2.7cm]{geometry}
\usepackage{cancel}

\numberwithin{equation}{section}

\newtheorem{theorem}{Theorem}[section]
\theoremstyle{plain}
\newtheorem{lemma}[theorem]{Lemma}
\theoremstyle{plain}
\newtheorem{proposition}[theorem]{Proposition}
\theoremstyle{plain}

\newtheorem{definition}[theorem]{Definition}
\theoremstyle{definition}
\newtheorem{remark}[theorem]{Remark}

\def\Xint#1{\mathchoice
    {\XXint\displaystyle\textstyle{#1}}%
    {\XXint\textstyle\scriptstyle{#1}}%
    {\XXint\scriptstyle\scriptscriptstyle{#1}}%
    {\XXint\scriptscriptstyle\scriptscriptstyle{#1}}%
    \!\int}
\def\XXint#1#2#3{\setbox0=\hbox{$#1{#2#3}{\int}$}
    \vcenter{\hbox{$#2#3$}}\kern-0.5\wd0}
\def\bint{\Xint-}
\def\bint{\Xint-}
\def\dashint{\Xint{\raise4pt\hbox to7pt{\hrulefill}}}
\def\dashiint{\bint\kern-0.15cm\bint}

\title{Harnack-type  inequalities for nonlocal doubly nonlinear parabolic equations}
\author{M. Savchenko$^{1}$,  I. Skrypnik$^{1}$, Y. Yevgenieva$^{2,1}$}
\thanks{$^{1}$Institute of Applied Mathematics and Mechanics, National Academy of Sciences of Ukraine, 84116 Sloviansk, Ukraine}
\thanks{$^{2}$Max Planck Institute for Dynamics of Complex Technical Systems, 39106 Magdeburg, Germany}
\thanks{{\tt\small\ shan{\_}maria@ukr.net, ihor.skrypnik@gmail.com, yevgenieva@mpi-magdeburg.mpg.de}}

\begin{document}
\begin{abstract} 
We establish intrinsic Harnack inequalities for nonlocal doubly nonlinear parabolic equations in the slow-diffusion range:
\begin{equation*}
(u^q)_t+ \operatorname{P.V.} \int\limits_{\mathbb{R}^N}\frac{|u(x, t)-u(y, t)|^{p-2}(u(x, t)-u(y, t))}{|x-y|^{N+sp}}\,dy=0,\quad s\in(0,1),\quad  0<q < p-1.
\end{equation*}

\vskip0.2cm 
\noindent
{{\bf MSC 2020:} 35R11, 35B65, 35K55} .

\vskip0.1cm 
\noindent
{\bf{Key Words}}: {Nonlocal parabolic equations,  Harnack inequality}

\end{abstract}

\maketitle

	\begin{center}
		\begin{minipage}{9cm}
			\small
			\tableofcontents
		\end{minipage}
	\end{center}

\addtocontents{toc}{\protect\setcounter{tocdepth}{1}}

\section{Introduction and Main Results}
In this paper, we consider nonnegative solutions to the equation
\begin{equation}\label{eq1.1}
(u^q)_t+\mathcal{L}u= 0,\quad (x, t)\in \Omega_T,\quad 0< q <p-1,
\end{equation}
where $\Omega_T:=\Omega \times(0, T)$, $\Omega$ is a bounded open domain in $\mathbb{R}^N$, $u\geqslant 0$ in $\Omega_T$. The nonlocal operator $\mathcal{L}$ is defined by
\begin{equation}\label{eq1.2}
\mathcal{L} u= \operatorname{P.V.} \int\limits_{\mathbb{R}^N} K(x, y, t) |u(x, t)- u(y, t)|^{p-2}(u(x, t)- u(y, t))\,dy,
\end{equation}
for some $p>1$. Here $\operatorname{P.V.}$ denotes the principal value of the integral, whereas the kernel
$K : \mathbb{R}^n \times \mathbb{R}^N \times (0, T) \rightarrow \mathbb{R}_+$ is a measurable function satisfying the following condition uniformly in $t$:
uniformly in $t$:
\begin{equation}\label{eq1.3}
\frac{K_1}{|x-y|^{N+sp}} \leqslant K(x, y, t)=K(y, x, t)\leqslant \frac{K_2}{|x-y|^{N+sp}},\quad \text{a. e.}\quad x, y \in \mathbb{R}^N, 
\end{equation}
with some $0< K_1 < K_2$ and $s\in (0, 1)$, $sp<N$.  We refer to the parameters $K_1$, $K_2$, $p$, $q$, $s$, $N$ as the {\it data}. Throughout the paper, $\gamma$ denotes a generic positive constant depending only on the data, which may change from line to line.

Postponing the formal definitions of weak solution, we will proceed to present the main results. 

\begin{theorem}\label{th1.1}
Let $u$ be a  continuous local weak solution to \eqref{eq1.1}--\eqref{eq1.3} in $\Omega_T$,  $u\geqslant 0$ in $\Omega_T$ and let $0< q< p-1$.  Fix $(x_0, t_0)\in \Omega_T$ such that $u(x_0, t_0) > 0$. There exist positive constants $C_*$, $C_1$, $C_2$, $C_3$ depending only on the data, such that, if
$$Q_{16\rho,4 C_*\theta}(x_0, t_0)
:=B_{16\rho}(x_0)\times
(t_0-4\,C_*\theta, t_0+4 C_*\theta)
\subset\Omega_T,
\qquad
\theta:= u(x_0, t_0)^{q-p+1}\rho^{sp},
$$
and if
\begin{equation}\label{eqHarnackTail}
\left(\,\,
\int\limits_{t_0- 4 C_*\theta}^{t_0+4 C_*\theta}
\,\,\int\limits_{\mathbb{R}^N\setminus B_\rho(x_0)}
\frac{[u_-(x,t)]^{p-1}}
{|x- x_0|^{N+sp}}
dx dt
\right)^{\frac1q}
\leqslant
\frac{1}{C_1}\,u(x_0, t_0),
\end{equation}
then the following estimates hold:
\begin{equation}\label{eqHarnack1}
\frac{1}{C_2}\sup\limits_{B_\rho(x_0)}u(\cdot, t_0-C_* \theta)\leqslant u(x_0, t_0)\leqslant C_2 \inf\limits_{B_\rho(x_0)}u(\cdot, t_0+C_* \theta),
\end{equation}
and
\begin{equation}\label{eqHarnack2}
\begin{aligned}
 \frac{1}{C_3}\left(\int\limits_{t_0- C_*\theta}^{t_0-\frac{1}{2}C_*\theta}
\,\,\int\limits_{\mathbb{R}^N}\frac{[u_+(x,t)]^{p-1}}{(\rho+|x-x_0|)^{N+sp}} dx dt\right)^{\frac1q}&\leqslant u(x_0, t_0)
\\&\leqslant C_3 \left(\,\,\int\limits_{t_0- \frac{1}{4}C_*\theta}^{t_0}
\,\,\int\limits_{\mathbb{R}^N}\frac{[u_+(x,t)]^{p-1}}{(\rho+|x-x_0|)^{N+sp}} dx dt\right)^{\frac1q}.
\end{aligned}
\end{equation}
\end{theorem}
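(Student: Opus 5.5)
The plan is to follow the DiBenedetto–Gianazza–Vespri strategy for intrinsic Harnack inequalities, adapted to the nonlocal doubly nonlinear setting. We combine three ingredients.

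\textbf{Ingredients.}
\begin{enumerate}
\item A local $L^\infty$–$L^1$ (or $L^\infty$–$L^{q+1}$) estimate for subsolutions. Its nonlocal part is controlled by a tail term of $u_+$.
\item A measure-to-pointwise expansion of positivity for supersolutions. Here the negative tail $u_-$ enters as a perturbation; this is exactly why condition \eqref{eqHarnackTail} appears.
\item An $L^1$-type Harnack inequality for supersolutions comparing $\sup_t\int_{B_\rho}u^q$ over time levels. Its error term is the same $(u_-)^{p-1}$ tail integral.
\end{enumerate}
Because the time scaling is intrinsic, $\theta=u(x_0,t_0)^{q-p+1}\rho^{sp}$, we first normalize. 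Set $v(x,\tau)=u(x_0+\rho x,t_0+\theta\tau)/u(x_0,t_0)$. Then $v$ solves an equation of the same type in $B_{16}\times(-4C_*,4C_*)$ with $v(0,0)=1$. In these variables the tail hypothesis says that the normalized $(v_-)^{p-1}$ tail is at most $C_1^{-q}$.

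\textbf{Lower bound in \eqref{eqHarnack1}.} We argue as in the local proof of Krylov–Safonov/DiBenedetto type.
\begin{itemize}
\item Consider $M(r)=\sup_{Q_r}v$ on shrinking intrinsic cylinders and the function $r\mapsto(1-r)^{\beta}M(r)$.
\item Use the $L^\infty$ estimate and continuity of $v$ to locate a point $(\bar x,\bar\tau)$ and a small ball around it where $v\ge \tfrac12 M(\bar r)$. On that ball, $M(\bar r)(1-\bar r)^\beta\ge 1$.
\item A De Giorgi-type lemma then shows $v\gtrsim 1$ on a fixed fraction of a ball $B_{\varepsilon}(\bar x)$ at time $\bar\tau$.
\item Apply expansion of positivity repeatedly to spread this to $B_{1}(0)$ at time $\tau=C_*$.
\end{itemize}
The smallness of the tail ($C_1$ large) keeps each expansion step valid. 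The constant $C_*$ is fixed by the number and waiting time of the expansion steps. This is where the slow-diffusion condition $q<p-1$ is used, through the $(\text{level})^{q-p+1}$ waiting time.

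\textbf{Upper bound in \eqref{eqHarnack1}.} We use a symmetric argument backward in time. If $\sup_{B_1}v(\cdot,-C_*)$ were larger than $C_2$, then the lower bound just proved, applied from a point where $v$ is large, would force $v(0,0)>1$, a contradiction. The constants $C_2$ and $C_*$ must be chosen consistently so that the intrinsic cylinders nest.

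\textbf{Integral estimates \eqref{eqHarnack2}.} The right inequality, $u(x_0,t_0)\le C_3(\cdots)^{1/q}$, follows by combining two facts:
\begin{itemize}
\item the pointwise lower bound $v\ge 1/C_2$ on $B_1$ at earlier times, obtained by applying the forward Harnack from the earlier time slab;
\item the tail integral with $(\rho+|x-x_0|)^{-N-sp}$ dominates $\int_{B_\rho}u^{p-1}$ up to constants.
\end{itemize}
The left inequality needs a bound on $\iint u_+^{p-1}(\rho+|x|)^{-N-sp}$ over $[t_0-C_*\theta,t_0-\tfrac12C_*\theta]$ by $u(x_0,t_0)^q$. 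For this we test the equation with a cutoff $\varphi$ supported in $B_{\rho/2}$ and integrate in time. The time term is bounded by $\sup\int_{B_\rho}u^q\varphi$, which is controlled by $u(x_0,t_0)^q\rho^N$ via the sup bound of \eqref{eqHarnack1}. For $|x|>\rho$, the nonlocal term $\int\varphi(x)\int (u(x)-u(y))^{p-1}K$ produces a positive contribution $\approx\int u(y)^{p-1}|y|^{-N-sp}$, up to the local part and the $u_-$ tail. Near $x_0$ we use the sup bound again.

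\textbf{Main obstacle.} The hardest step is the expansion of positivity and $L^1$–Harnack with precise control of the negative tail in intrinsic time. In the doubly nonlinear nonlocal case the tail must be absorbed at each iteration scale. The difficulty is that the time length changes with the level, so the tail integral over the full cylinder must dominate all of these. We would first try to prove expansion of positivity with the tail measured exactly as in \eqref{eqHarnackTail}, raised to power $1/q$. The scaling $u^q\sim\theta\, u^{p-1}\rho^{-sp}$ makes this homogeneous, and we would verify the De Giorgi energy estimates accordingly.
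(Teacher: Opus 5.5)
\textbf{There is a genuine gap in your forward bound in \eqref{eqHarnack1}, and the culprit is the positive tail, not the negative one.} In the Krylov--Safonov/DiBenedetto localization, suppose you have $(\bar x,\bar\tau)$ with $v(\bar x,\bar\tau)=M(\bar r)$ and $v\le 2^\beta M(\bar r)$ on a surrounding cylinder. You must then show, by a sub-solution estimate (De Giorgi lemma or $L^\infty$ bound), that $v\gtrsim M(\bar r)$ on a quantified fraction of a small ball; continuity alone gives a ball of unknown size.

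In the nonlocal setting every such estimate contains the time-integrated tail of $v_+$ centred at $\bar x$, and it works only at levels above that tail (compare \eqref{eqUpper1}). The bound $v\le 2^\beta M(\bar r)$ holds only inside $Q_{(1+\bar r)/2}$, and \eqref{eqHarnackTail} says nothing about $u_+$. So this tail, in particular the contribution of $u$ outside $B_{16\rho}(x_0)$, is uncontrolled and can exceed $M(\bar r)$ by any factor. The sub-solution step then gives no contradiction with $v(\bar x,\bar\tau)=M(\bar r)$. To make your route work you would need an extra super-solution tail estimate at every scale of the localization, with constants compatible with the $(1-r)^\beta$ bookkeeping. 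You do not supply this.

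The paper never localizes the supremum. It applies Proposition~\ref{pr1.2} on $Q^-_{\rho,\theta'}(x_0,t_0)$ with $\theta'=(u(x_0,t_0)/2\bar C_3)^{q-p+1}\rho^{sp}=(2\bar C_3)^{p-1-q}\theta$. Then the term $\bar C_3(\rho^{sp}/\theta')^{1/(p-1-q)}=\frac12u(x_0,t_0)$ is absorbed, giving
$u(x_0,t_0)\le 2\bar C_3\big(\int_{t_0-\theta'}^{t_0}\int_{\mathbb{R}^N}u_+^{p-1}(\rho+|x-x_0|)^{-N-sp}\,dx\,dt\big)^{1/q}$.
This is the right half of \eqref{eqHarnack2} once $C_*\ge 4(2\bar C_3)^{p-1-q}$. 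It is also hypothesis \eqref{eqWeakProp1} with $\mathcal I=u(x_0,t_0)/2\bar C_3$. Proposition~\ref{pr1.1} then yields the forward bound via two steps: Lemma~\ref{lemShrinking1}, where this weighted $u_+$ integral sits in the denominator and forces the sublevel set to be small, and then Lemma~\ref{lemDGmeasuredata}. The positive tail is exploited as a source of positivity rather than estimated as an error.

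\textbf{Your derivation of the right half of \eqref{eqHarnack2} fails for the same reason.} It rests on $v\ge 1/C_2$ on $B_1$ for $\tau\in(-\frac14C_*,0)$. The forward inequality cannot give this, since it bounds later values by earlier ones, and the claim is false. Let $\Omega=B_{32\rho}(x_0)$, let $u$ be small in $\Omega$ and zero outside up to time $t_0-\lambda$, and from then on let the exterior data equal a large constant $H$ on $B_{64\rho}(x_0)\setminus\Omega$. The nonlocal source makes $u^q$ grow at rate $\approx H^{p-1}\rho^{-sp}$ on $B_\rho(x_0)$. With $\lambda=\delta\rho^{sp}H^{q-p+1}$ one gets $u(x_0,t_0)\approx\delta^{1/q}H$ and $\lambda/\theta\approx\delta^{(p-1)/q}\ll1$. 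Meanwhile $u$ stays as small as you like on $B_\rho(x_0)$ for most of $(t_0-\frac14C_*\theta,t_0)$, and all hypotheses hold with $u\ge0$. Here only the $u_+$ tail in \eqref{eqHarnack2} keeps the inequality true, and you discard it by keeping only $\int_{B_\rho}u^{p-1}$. Use your Ingredient~1 with the absorption above instead.

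The remaining steps are fine. Your backward argument for the left half of \eqref{eqHarnack1} is the paper's appeal to DiBenedetto--Gianazza--Vespri. Testing the equation for the left half of \eqref{eqHarnack2} is a legitimate alternative to the paper's use of \eqref{eqShr8}. Both need $u\le C_2u(x_0,t_0)$ on $B_\rho$ (or a fixed fraction of it) throughout $(t_0-C_*\theta,t_0-\frac12C_*\theta)$, not only at $t_0-C_*\theta$; you get this by varying $\rho$ in \eqref{eqHarnack1}.
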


Theorem \ref{th1.1} is a consequence of the following two propositions. The first one is an analogue of the weak Harnack inequality. 

\begin{proposition}\label{pr1.1}
Let $u$ be a  local weak super-solution of \eqref{eq1.1}--\eqref{eq1.3} in $\Omega_T$,  $u\geqslant 0$ in $\Omega_T$.  Assume that $0< q< p-1$ and fix number $\mathcal{I}>0$. There exist constants $\bar{C}_*$, $\bar{C}_1$, $\bar{C}_2>1$, depending only on the data, such that for every cylinder
$$
Q_{16\rho,4\bar{C}_*\theta}(\bar{x},\bar{t})
:=B_{16\rho}(\bar{x})\times
(\bar{t}-4\bar{C}_*\theta,\bar{t}+4\bar{C}_*\theta)
\subset\Omega_T,
\qquad
\theta:=\mathcal{I}^{q-p+1}\rho^{sp},
$$
for which
\begin{equation}\label{eqWeakProp1}
\mathcal{I}\leqslant
\left(\,\,
\int\limits_{\bar{t}-\theta}^{\bar{t}}
\,\,\int\limits_{\mathbb{R}^N}
\frac{[u_+(x,t)]^{p-1}}
{(\rho+|x-\bar{x}|)^{N+sp}}
dx dt
\right)^{\frac1q},
\end{equation}
and
\begin{equation}\label{eqWeakProp2}
\left(\,\,
\int\limits_{\bar{t}-4\bar{C}_*\theta}^{\bar{t}+4\bar{C}_*\theta}
\,\,\int\limits_{\mathbb{R}^N\setminus B_\rho(\bar{x})}
\frac{[u_-(x,t)]^{p-1}}
{|x-\bar{x}|^{N+sp}}
dx dt
\right)^{\frac1q}
\leqslant
\frac{\mathcal{I}}{\bar{C}_1}.
\end{equation}
Then
\begin{equation}\label{eqWeakProp3}
\mathcal{I}\leqslant
\bar{C}_2\inf\limits_{B\rho(\bar{x})}
u(\cdot,\bar{t}+\bar{C}_*\theta).
\end{equation}
\end{proposition}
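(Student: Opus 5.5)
We would prove Proposition \ref{pr1.1} in the spirit of the local theory for doubly nonlinear equations (DiBenedetto–Gianazza–Vespri, Bögelein–Duzaar–Liao) combined with the nonlocal measure‑theoretic machinery of Kassmann and Liao. The argument has three steps, carried out in this order: (i) an \emph{integral-to-measure} step, turning the tail-type lower bound \eqref{eqWeakProp1} into a lower bound for $u$ on a set of positive measure at some time level; (ii) an \emph{expansion of positivity} step, spreading this bound to a full ball at a later time; (iii) adjusting the waiting time so that it equals exactly $\bar C_*\theta$.

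\emph{Step 1 (from \eqref{eqWeakProp1} to a measure estimate).} We test the weak formulation of the super-solution with $\zeta^p(x)$, where $\zeta$ is a cutoff equal to $1$ on $B_{\rho}(\bar x)$ and supported in $B_{2\rho}(\bar x)$, integrated over a time interval $(\tau,\bar t)$ with $\tau\in(\bar t-\theta,\bar t)$. For $x\in B_{2\rho}$ the nonlocal term with $y$ far away gives, since $u\ge0$ in $\Omega_T$ and the negative part is controlled by \eqref{eqWeakProp2},
\[
\int_{\mathbb{R}^N} K\,|u(x)-u(y)|^{p-2}(u(x)-u(y))\,dy \le \gamma\rho^{-sp}u(x)^{p-1}+\gamma \int u_-^{p-1}|y-\bar x|^{-N-sp}dy -\gamma^{-1}\int\frac{u_+(y)^{p-1}}{(\rho+|y-\bar x|)^{N+sp}}dy .
\]
Hence the time derivative of $\int_{B_{2\rho}}u^q\zeta^p\,dx$ is bounded below by the tail integrand minus local terms. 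Integrating over $(\bar t-\theta,\bar t)$ and using \eqref{eqWeakProp1}, \eqref{eqWeakProp2} with $\bar C_1$ large, we obtain: either $\sup_t\int_{B_{2\rho}}u^q\,dx\ge \gamma^{-1}\mathcal I^q\rho^N$ at some time, or the local term $\rho^{-sp}\iint_{B_{2\rho}\times(\bar t-\theta,\bar t)}u^{p-1}$ is $\ge\gamma^{-1}\mathcal I^q$. By the choice $\theta=\mathcal I^{q-p+1}\rho^{sp}$ both alternatives are homogeneous in $\mathcal I$. Each of them, combined with an $L^\infty$-free argument (a de Giorgi-type upper truncation: the portion of $u$ above $M\mathcal I$ is controlled through the energy estimate with test $(u-k)_-$), gives a time $t_1\in(\bar t-\theta,\bar t]$ with
\[
|\{u(\cdot,t_1)>\varepsilon\mathcal I\}\cap B_{2\rho}(\bar x)|\ge\alpha|B_{2\rho}|,
\]
where $\varepsilon,\alpha$ depend only on the data. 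We expect this to be the main obstacle, because $u$ is not assumed bounded and a large mass could sit on a small set. To handle it we would first try a logarithmic/$L^1$-Harnack type estimate for super-solutions, in the spirit of Lemma-type “$\sup_t\int u^q\lesssim\inf+$tail” estimates. This lets us replace the smallness of a large-value set by using $u^{q}\wedge(M\mathcal I)^q$ in the test function.

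\emph{Step 2 (expansion of positivity).} Starting from the measure information at $t_1$, we use the standard chain for super-solutions of \eqref{eq1.1} in the slow-diffusion range $q<p-1$:
\begin{itemize}
\item[(a)] time propagation of the measure condition on $(t_1,t_1+\delta(\varepsilon\mathcal I)^{q-p+1}\rho^{sp})$, via energy estimates for $(u-k)_-$ with the time term $\int_{u}^{k}(k-\xi)_+\,q\xi^{q-1}d\xi$;
\item[(b)] a shrinking lemma, based on the nonlocal De Giorgi isoperimetric inequality, giving $|\{u<\varepsilon\sigma^j\mathcal I\}|\le\nu$ in a cylinder;
\item[(c)] a De Giorgi lemma yielding the pointwise bound $u\ge\eta\mathcal I$ on $B_{4\rho}$ over a time interval.
\end{itemize}
The tail terms appearing in these energy estimates involve only $u_-$ outside $B_\rho$, and they are absorbed by \eqref{eqWeakProp2} after enlarging $\bar C_1$. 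Because the slow diffusion makes waiting times scale like $(\eta\mathcal I)^{q-p+1}\rho^{sp}=\eta^{q-p+1}\theta$, we take $\bar C_*$ comparable to $\eta^{q-p+1}$, so that the positivity set covers $\bar t+\bar C_*\theta$.

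\emph{Step 3 (hitting the exact time).} To get \eqref{eqWeakProp3} at exactly $\bar t+\bar C_*\theta$, rather than on some uncontrolled interval, we use the standard trick for $q<p-1$: once $u\ge\eta\mathcal I$ on $B_{4\rho}$ at some time, we reapply step (c) with decreasing levels $\eta_k=\eta\,2^{-k}$. Each waiting time scales like $\eta_k^{q-p+1}\theta$, which grows, so any prescribed time in $(\bar t,\bar t+4\bar C_*\theta)$ is reached after finitely many steps, with $k$ bounded by the data. This gives $\inf_{B_\rho}u(\cdot,\bar t+\bar C_*\theta)\ge\mathcal I/\bar C_2$.
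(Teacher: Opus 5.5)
\textbf{Gap in Step 1.} Step 1 does not go through as you propose to carry it out, and the obstacle you flag there is structural, not a technicality. Testing with $\zeta^p$ only yields integral lower bounds: on $\int_{B_{2\rho}}u^q$ at one time, or on $\rho^{-sp}\iint u^{p-1}$. For a super-solution nothing prevents this mass from sitting on a set of arbitrarily small measure. None of your remedies fixes this:
\begin{itemize}
\item truncations $(u-k)_-$ carry no information about where $u$ is large;
\item super-solutions admit no upper (De Giorgi $(u-k)_+$) estimates;
\item replacing $u$ by $u\wedge M\mathcal I$ can destroy \eqref{eqWeakProp1}, since the whole weighted integral may come from the region where $u>M\mathcal I$;
\item an $L^1$/logarithmic Harnack route would additionally need a Krylov--Safonov or DiBenedetto-type covering argument, i.e.\ essentially the weak Harnack inequality you are trying to prove.
\end{itemize}
There is also a secondary problem. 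Your pointwise lower bound for $-\mathcal Lu$ holds only for the far part of the integral. After testing with $\zeta^p$, the near-diagonal part $\iint_{B_{4\rho}\times B_{4\rho}}|u(x)-u(y)|^{p-1}|\zeta^p(x)-\zeta^p(y)|\,|x-y|^{-N-sp}\,dx\,dy$ is not controlled by $\rho^{-sp}\iint u^{p-1}$ alone once $sp\geqslant 1$.

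\textbf{The missing idea.} Use nonlocality directly and never pass through mass. Apply Lemma \ref{lemEnergy} to $(u-\epsilon\mathcal I)_-$ on $(\bar t-\theta,\bar t)$ with a time-independent cutoff, and keep the nonnegative interaction term on the left:
\[
\int_{\bar t-\theta}^{\bar t}\int_{B_{\frac34\rho}(\bar x)}(u-\epsilon\mathcal I)_-(x,\tau)\int_{\mathbb{R}^N}\frac{[(u-\epsilon\mathcal I)_+(y,\tau)]^{p-1}}{|x-y|^{N+sp}}\,dy\,dx\,d\tau\leqslant\gamma(\epsilon\mathcal I)^{q+1}|B_\rho(\bar x)|.
\]
The right-hand side collects three contributions:
\begin{itemize}
\item the initial term;
\item the cutoff term $\gamma(\epsilon\mathcal I)^p\rho^{-sp}\theta|B_\rho|=\gamma\epsilon^p\mathcal I^{q+1}|B_\rho|$, where $p>q+1$ is used;
\item the $u_-$-tail, which is small by \eqref{eqWeakProp2} with $\bar C_1$ large.
\end{itemize}
For $x\in B_{\frac34\rho}(\bar x)$ one has $|x-y|\leqslant\rho+|y-\bar x|$, and $(u-\epsilon\mathcal I)_-\geqslant\frac12\epsilon\mathcal I$ on $\{u\leqslant\frac12\epsilon\mathcal I\}$. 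By \eqref{eqWeakProp1}, the left-hand side is therefore at least $\frac12\epsilon\mathcal I\,(\gamma^{-1}-\gamma\epsilon^{p-1})\mathcal I^q\inf_\tau|B_{\frac34\rho}(\bar x)\cap\{u(\cdot,\tau)\leqslant\frac12\epsilon\mathcal I\}|$.

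This is Lemma \ref{lemShrinking1}: at some $\tilde t\in(\bar t-\theta,\bar t)$ the set $\{u(\cdot,\tilde t)\leqslant\frac12\epsilon\mathcal I\}$ occupies at most a fraction $\gamma\epsilon^q$ of $B_{\frac34\rho}(\bar x)$. This holds however concentrated $u_+$ is, because large values of $u$ anywhere, weighted by $(\rho+|y-\bar x|)^{-N-sp}$, push up every point where $u$ is small.

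Now choose $\gamma\epsilon^q\leqslant\nu_1$. Lemma \ref{lemDGmeasuredata} then gives $u\geqslant\frac14\epsilon\mathcal I$ on a concentric ball over a time interval of length comparable to $\delta_1\epsilon^{q-p+1}\theta\gg\theta$. That length absorbs the uncertainty in $\tilde t$ and fixes $\bar C_*$. Since you obtain smallness of the sublevel set, not merely a set of positive density, the propagation/shrinking chain of your Step 2 and the level iteration of your Step 3 are unnecessary.
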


\begin{remark}\label{remWeak}
Note that since 
$$\int\limits^{\bar{t}}_{\bar{t}-\theta}\,\,\int\limits_{B_\rho(\bar{x})}\frac{\big[u(x, t)\big]^{p-1}}{\big(\rho+|x-\bar{x}|\big)^{N+sp}}\,dx dt\asymp \frac{1}{\rho^{N+sp}} \int\limits^{\bar{t}}_{\bar{t}-\theta}\,\,\int\limits_{B_\rho(\bar{x})}\big[u(x, t)\big]^{p-1}\,dx dt,$$
and
$$\int\limits^{\bar{t}}_{\bar{t}-\theta}\,\,\int\limits_{\mathbb{R}^N \setminus B_\rho(\bar{x})}\frac{\big[u_+(x, t)\big]^{p-1}}{\big(\rho+|x-\bar{x}|\big)^{N+sp}}\,dx dt\asymp  \int\limits^{\bar{t}}_{\bar{t}-\theta}\,\,\int\limits_{\mathbb{R}^N\setminus B_\rho(\bar{x})}\frac{\big[u_+(x, t)\big]^{p-1}}{|x-\bar{x}|^{N+sp}}\,dx dt,$$
condition \eqref{eqWeakProp1} is equivalent (up to the constant) to
\begin{equation}\label{eqWeakProp4}
\mathcal{I}\leqslant \Big(\fint\limits^{\bar{t}}_{\bar{t}-\theta}\,\fint\limits_{B_\rho(\bar{x})}\big[u(x, t)\big]^{p-1}\,dx dt\Big)^{\frac{1}{p-1}}+ \operatorname{Tail}(u_+, Q^-_{\rho, \theta}(\bar{x}, \bar{t})),\quad \theta=\mathcal{I}^{q-p+1}\rho^{sp}.
\end{equation}
\end{remark}

\begin{proposition}\label{pr1.2}
Let $u$ be a  local weak sub-solution to \eqref{eq1.1}-\eqref{eq1.3} in $\Omega_T$, $u\geqslant 0$ in $\Omega_T$, and  $0<q < p-1$. In the case $0< q< 1$ assume additionally that $u$ is locally bounded in $\Omega_T$. Then there exists number $\bar{C}_3 > 0$ depending only on the data, such that, for every cylinder
$Q^-_{\rho, \theta}(\bar{x}, \bar{t})\subset \Omega_T$, there holds
\begin{equation}\label{eqUp1}
\sup\limits_{Q^-_{\frac{1}{2}\rho, \frac{1}{2}\theta}(\bar{x}, \bar{t})} u\leqslant  \bar{C}_3 \bigg[\bigg( \int\limits^{\bar{t}}_{\bar{t}-\theta}\,\int\limits_{\mathbb{R}^N}\frac{[u_+(x, t)]^{p-1}}{(\rho+|x-\bar{x}|)^{N+sp}}dx d t\bigg)^{\frac{1}{q}}+
 \Big(\frac{\rho^{sp}}{\theta}\Big)^{\frac{1}{p-1-q}}\bigg].
\end{equation}
\end{proposition}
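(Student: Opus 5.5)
We prove the $L^\infty$–$L^{p-1}$ (tail-corrected) estimate \eqref{eqUp1} by a De Giorgi iteration on shrinking cylinders. We then absorb the intrinsic mismatch between the time scale $\theta$ and the natural scale $M^{q-p+1}\rho^{sp}$ through an interpolation and covering argument.

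\textbf{Local energy estimate.} Fix $\tfrac12\rho\le r<R\le\rho$, cylinders $Q_n=B_{\rho_n}\times(\bar t-\theta_n,\bar t]$ with $\rho_n=\tfrac12\rho(1+2^{-n})$, $\theta_n=\tfrac12\theta(1+2^{-n})$, levels $k_n=k(1-2^{-n})$, and cutoffs $\zeta_n$ vanishing outside $Q_n$ and equal to one on $Q_{n+1}$, with $|\nabla\zeta_n|\lesssim 2^n/\rho$ and $|\partial_t\zeta_n|\lesssim 2^n/\theta$. Testing the sub-solution inequality with $(u-k_{n+1})_+\zeta_n^p$ (after Steklov averaging in time) gives
\[
\sup_t\int_{B_{\rho_n}}\!\!\mathcal G(u,k_{n+1})\zeta_n^p+\iint |(u-k_{n+1})_+\zeta_n|_{W^{s,p}}^p
\le \gamma\Big[\tfrac{2^{np}}{\rho^{sp}}\iint_{Q_n}(u-k_{n+1})_+^p+\tfrac{2^{n}}{\theta}\iint_{Q_n}\mathcal G(u,k_{n+1})+ T\iint_{Q_n}(u-k_{n+1})_+\Big].
\]
Here $\mathcal G(u,k)=\int_k^u q\tau^{q-1}(\tau-k)\,d\tau$. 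The tail factor satisfies
\[
T\lesssim 2^{n(N+sp)}\rho^{-sp}\sup_t\!\int_{\mathbb R^N\setminus B_{\rho_n}}\!\!\frac{u_+^{p-1}}{|x-\bar x|^{N+sp}}dx.
\]
Because the tail in \eqref{eqUp1} is integrated in time rather than taken as a supremum, we keep it as $\int |\,\cdot\,|dt$ with the factor $(u-k)_+\chi$ estimated by its sup-in-time. Consequently the term appears as $k^{-?}\,\sup_t\!\int_{B}(u-k)_+\cdot\iint_{\rm tail}$ and is absorbed into the left-hand side by choosing $k^q\gtrsim \int\!\!\int \frac{u_+^{p-1}}{(\rho+|x|)^{N+sp}}$. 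Indeed $\mathcal G(u,k)\gtrsim k^{q-1}(u-k)_+^2$ on $\{u>2k\}$-type sets, so this is exactly where the exponent $1/q$ on the tail originates. For $q<1$ we have $\mathcal G(u,k)\asymp (u-k)_+^2 u^{q-1}$, which degenerates for large $u$; this is why local boundedness is assumed there, so that we may first work with $M=\sup_{Q^-_{\rho,\theta}}u<\infty$ and bound $u^{q-1}\ge M^{q-1}$.

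\textbf{Iteration.} Combining the energy estimate with the fractional parabolic Sobolev embedding, and using $\mathcal G(u,k_{n+1})\ge c\,k^{q-1}(u-k_{n+1})_+^2$ (or the analogous bound with $M^{q-1}$), we obtain for $Y_n=\iint_{Q_n}(u-k_n)_+^{p}/(k^p|Q_n|)$ a recursion
\[
Y_{n+1}\le \gamma b^n\Big(\tfrac{k^{q+1-p}\rho^{sp}}{\theta}\Big)^{\alpha}\,\Big(1+\tfrac{\theta\, k^{p-1-q}}{\rho^{sp}}\Big)^{\beta}\,Y_n^{1+\delta}.
\]
Choosing $k\ge (\rho^{sp}/\theta)^{1/(p-1-q)}$ balances the two time scales. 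The condition $Y_0\le$ small then reads as a lower bound on $k$ in terms of an $L^{p}$ (or $L^{p-1}$ after the usual lowering) average of $u$ plus the tail. This yields
\[
\sup_{Q_{r}}u\le \gamma\Big(\tfrac{R}{R-r}\Big)^{\kappa}\Big(\fint\!\!\fint_{Q_R}u^{p-1}\Big)^{\frac{\epsilon}{p-1}}M_R^{1-\epsilon}+\gamma\,\mathrm{Tail}^{1/q}+\gamma\Big(\tfrac{\rho^{sp}}{\theta}\Big)^{\frac1{p-1-q}}.
\]

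\textbf{Closing.} Young's inequality gives $\sup_{Q_r}u\le \tfrac12\sup_{Q_R}u+\gamma(R-r)^{-\kappa'}(\dots)$, and the standard iteration lemma removes the first term. Rewriting the $L^{p-1}$ average as $\rho^{-(N+sp)}\theta^{-1}\!\iint u^{p-1}\cdot\rho^{sp}\theta$ yields a term $(\theta/\rho^{sp})^{1/(p-1)}(\iint u^{p-1}/(\rho+|x|)^{N+sp})^{1/(p-1)}$. One further Young split, with exponents adapted to $q$ versus $p-1$, turns this into
\[
\Big(\iint u^{p-1}/(\rho+|x|)^{N+sp}\Big)^{1/q}+(\rho^{sp}/\theta)^{1/(p-1-q)},
\]
which is \eqref{eqUp1}. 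I expect the main obstacle to be handling the time-integrated tail, rather than the usual $L^\infty_t$ tail, together with the degenerate term $\mathcal G$ for $q<1$; both must be controlled while keeping all constants independent of $M$ in the final bound.
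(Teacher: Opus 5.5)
Your skeleton (De Giorgi iteration, then interpolation over nested cylinders) is the same as the paper's. However, there is a genuine gap at the step you flag as the main obstacle: with constant levels, the time-integrated tail cannot be controlled the way you describe.

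\textbf{The tail term.} The far-tail contribution is $\int F(t)\int_B(u-k_{n+1})_+\zeta_n^p\,dx\,dt$ with $F$ only integrable in time. You are therefore forced through $\sup_t\int_B(u-k_{n+1})_+\,dx\cdot\int F\,dt$.

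This is linear in $(u-k_{n+1})_+$, whereas $\mathcal G(u,k_{n+1})\asymp k^{q-1}(u-k_{n+1})_+^2$ near the level. So it cannot be absorbed into the left-hand side at the same level, for any choice of $k$.

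Dropping to the previous level via $(u-k_{n+1})_+\leqslant 2^{n+1}k^{-1}(u-k_n)_+^2$ gives $2^{n+1}k^{-q}\int F\,dt\cdot k^{q-1}\sup_t\int(u-k_n)_+^2$. This term has three defects:
\begin{enumerate}
\item it is linear in the previous sup-in-time energy;
\item it carries no measure factor;
\item its coefficient grows with $n$.
\end{enumerate}
The Sobolev embedding does not make such a term superlinear, and a recursion $E_{n+1}\leqslant Cb^n(\epsilon_0E_n+E_n^{1+\delta})$ does not force $E_n\to0$.

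\textbf{The missing idea.} The paper uses a \emph{time-dependent} truncation level. It shifts $k_\sigma(t)$ by $\tilde C\mathcal K(t)$, where $\mathcal K(t)=k^{1-q}\int_{\bar t-\Lambda}^{t}\int_{\mathbb R^N\setminus B_R(\bar x)}u_+^{p-1}|x-\bar x|^{-N-sp}\,dx\,d\tau$.

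The term $\mp\iint k'(\tau)h(u^\pm_{k(\tau)})\zeta^p$ in Lemma~\ref{lemEnergy}, combined with $h(u^+_k)=\int_k^u z^{q-1}dz\gtrsim k^{q-1}u^+_k$, then cancels the far tail pointwise in time. The shift must make this term nonpositive, i.e.\ $k(t)$ must be nondecreasing for $(u-k(t))_+$. For $q<1$, the lower bound on $h$ uses $k\geqslant\bar\epsilon M_1$, and this is removed at the end by interpolation.

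The requirement $k\geqslant(4\tilde C\iint\cdots)^{1/q}$ keeps the moving levels comparable to $k$. This, not an absorption, is where the exponent $1/q$ comes from. Only the near tail over $B_R\setminus B_\sigma$ remains; it is bounded using $M_1^{p-1}\leqslant\bar\epsilon^{1-p}k^{p-1}$ and enters linearly in $\iint(u-k_\sigma)_+^p$, which is harmless.

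\textbf{The intrinsic bookkeeping.} Your handling of the intrinsic scaling is also wrong, in two ways.

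First, carry the factor $\theta k^{p-1-q}/\rho^{sp}$ of your recursion through the smallness condition on $Y_0$, using the exponents that the energy estimate and Lemma~\ref{lemEmbedding} actually produce. The result is the intrinsic requirement $k^{q+1}\geqslant\gamma\rho^{-N-sp}\iint_Q u^p$, not a lower bound by an average of $u$. So your intermediate bound with $\bigl(\frac{1}{|Q|}\iint u^{p-1}\bigr)^{\epsilon/(p-1)}M_R^{1-\epsilon}$ does not follow from your iteration.

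The intrinsic form is in fact exactly what you need. Using $u^p\leqslant M_1u^{p-1}$ and Young with exponents $q+1$ and $\frac{q+1}{q}$ gives $\bar\epsilon M_1+\gamma(\rho^{-N-sp}\iint u^{p-1})^{1/q}$ directly. This is the paper's route to \eqref{eqUp1}.

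Second, your closing conversion inverts the factor. The correct relation is $\frac{1}{|Q|}\iint_Q u^{p-1}\asymp\frac{\rho^{sp}}{\theta}\,\rho^{-N-sp}\iint_Q u^{p-1}$. With your factor $(\theta/\rho^{sp})^{1/(p-1)}$, the claimed Young split is false. Setting $a=I^{1/q}$ and $b=(\rho^{sp}/\theta)^{1/(p-1-q)}$, the term equals $a^{q/(p-1)}b^{-(p-1-q)/(p-1)}$, which is unbounded relative to $a+b$ as $b\to0$. With the correct factor the split would work, but it presupposes the non-intrinsic bound, which your iteration does not deliver.
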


\subsection{Novelty and significance}

In this subsection, we provide an overview of the development of the regularity theory for local and nonlocal doubly nonlinear equations, place our main result in this broader context, and discuss the main ideas and techniques used in its proof.

The local counterpart of \eqref{eq1.1} is the following doubly nonlinear parabolic equation:
\begin{equation}\label{eq:loc_dn_eq}
(|u|^{q-1}u)_t-\operatorname{div}(|\nabla u|^{p-2}\nabla u)=0,\qquad p>1,\ q>0.
\end{equation}
The regularity theory for \eqref{eq:loc_dn_eq} is by now well developed, although some regimes have only recently been understood. For the Trudinger case $q=p-1$, Harnack theory goes back to~\cite{Tru1968} and was developed in~\cite{GiaVes2006,KinKuu2007}. H\"older continuity was established in~\cite{KuuSilUrb2012,KuuLalSilUrb2012} and extended to signed solutions in~\cite{BogDuzLia2021}.

For general doubly nonlinear equations, early H\"older estimates in the doubly degenerate regime ($p>2$ and $q<p-1$) were obtained in~\cite{Iva1991,PorVes1993}, while the doubly singular regime ($p<2$ and $q>p-1$) was studied in~\cite{VesVes2022,Hen2023}. Results for signed solutions were obtained in~\cite{LiaSch2022,BogDuzLiaSch2023}. In the mixed regimes (degenerate--singular or singular--degenerate), the competing effects of degeneracy and singularity make the propagation of positivity and the reduction of oscillation more delicate. The recent works~\cite{CiaHenSavSkrYev2026arxiv} combines a new integral $L^1$--$L^1$ Harnack estimate with intrinsic scaling and positivity arguments to establish H\"older continuity for nonnegative solutions up to the relevant Barenblatt thresholds. Signed solutions are studied in~\cite{Skr2026arxiv}.

Concerning Harnack inequalities, B\"ogelein, Heran, Sch\"atzler, and Singer~\cite{BogHerSchSin2021} established an intrinsic Harnack inequality throughout the slow-diffusion range, $p>1$ and $0<q<p-1$. B\"ogelein, Duzaar, Gianazza, Liao, and Scheven~\cite{BogDuzGiaLiaSch2026} obtained forward, backward, and elliptic Harnack estimates in the fast-diffusion range $p>1$, $p-1<q<\frac{N(p-1)}{(N-p)_+}$.

For the nonlocal equation~\eqref{eq1.1}, the available results depend strongly on the particular form of the nonlinearity. When $q=1$ and $p>2$, Prasad~\cite{Pra2026arxiv} established an elliptic-type Harnack inequality for the fractional parabolic $p$-Laplacian by means of a comparison principle. Liao~\cite{Lia2026arxiv} obtained intrinsic Harnack estimates and, in addition, established an expansion of positivity result.
In the singular range $1<p<2$, Cassanello, Ciani, and Iannizzotto~\cite{CasCiaIan2026} obtained integral Harnack-type estimates.

In the fast-diffusion setting ($p=2$ and $q>1$), Cassanello, De Filippis, and Stanko~\cite{CasDeFSta2026arxiv} proved an integral Harnack inequality for $1<q<\frac{N}{N-2s}$, with $N\ge2$, and an expansion of positivity for $q>1$, under suitable tail assumptions. The integral estimate is a step towards a full pointwise Harnack theory.

For the nonlocal Trudinger equation, Prasad~\cite{Pra2024} proved a weak Harnack inequality for globally nonnegative weak supersolutions. Shang and Zhang~\cite{ShaZha2026} obtained weak Harnack estimates with explicit tail contributions, allowing nonnegativity to be imposed locally. Adimurthi~\cite{Adi2025arxiv} establishes H\"older continuity for bounded, possibly sign-changing weak solutions. Ciani and Nakamura~\cite{CiaNak2026arxiv} established a full Harnack inequality for $p>1$, with quantitative control of the nonlocal contributions.

Beyond the balanced case, that is, when $q\neq p-1$, the regularity theory remains less complete. Li~\cite{Li2026} establishes H\"older continuity for locally bounded, possibly sign-changing weak solutions in the doubly degenerate range $p>2$, $0<q<p-1$. Adimurthi and Modasiya~\cite{AdiMod2025arxiv} obtain H\"older estimates for locally bounded signed solutions for a restricted range of mixed regmes. Using the approach of DiBenedetto, Gianazza, and Vespri~\cite{DiBGiaVes2012}, Misawa and Yamaura~\cite{MisYam2025} established expansion of positivity under appropriate tail conditions. A unified theory of weak and full Harnack inequalities for genuinely doubly nonlinear nonlocal equations remains to be developed.


The main result of this paper, Theorem~\ref{th1.1}, establishes an intrinsic Harnack inequality for locally bounded, nonnegative local weak solutions to \eqref{eq1.1}-\eqref{eq1.3} in the slow-diffusion range $0<q<p-1$. 

Propositions \ref{pr1.1} and \ref{pr1.2} play distinct but complementary roles in the proof of Theorem~\ref{th1.1}. The weak Harnack estimate controls the infimum at later times by a weighted integral over an earlier time interval, while the local upper estimate controls the supremum in a backward cylinder by a weighted integral of the solution. Applying these estimates on appropriately chosen intrinsic cylinders yields the two-sided integral bounds in \eqref{eqHarnack2}. Their combination also gives the pointwise comparison in \eqref{eqHarnack1}. Consequently, the proof of the main result reduces to establishing these two one-sided estimates, which are of independent interest for the regularity theory of doubly nonlinear nonlocal parabolic equations.

\subsection{Structure of the paper}

The paper is organized as follows. Section~\ref{sec:aux} introduces the notation and the definition of a weak solution to \eqref{eq1.1} and presents preliminary results, including local energy estimates and De Giorgi-type lemmas. Section~\ref{sec:ExpPos} establishes the expansion of positivity, a key ingredient in the proof of the main result. Sections~\ref{sec:prop1.1} and~\ref{sec:prop1.2} are devoted to the proofs of Propositions~\ref{pr1.1} and~\ref{pr1.2}, respectively. Finally, Section~\ref{sec:mainProof} combines these estimates to prove Theorem~\ref{th1.1}.

\section{Auxiliary Propositions}\label{sec:aux}

\subsection{Definitions and Notations}

For any $a$, $b > 0$ we  write $a\asymp b$ if there exists $\gamma >0$ such that
$$\gamma^{-1}\, b \leqslant a \leqslant \gamma\,b.$$

\subsubsection{\textbf{Function Spaces}}

Let $p>1$ and $s\in(0,1)$. We define the fractional Sobolev space $W^{s,p}(\mathbb{R}^N)$ by
$$W^{s, p}(\mathbb{R}^N):=\big\{ u\in L^p(\mathbb{R}^N) : \int\limits_{\mathbb{R}^N}\int\limits_{\mathbb{R}^N}\frac{|u(x)-u(y)|^p}{|x-y|^{N+sp}}\,dx dy < \infty\big\},$$
equipped with the norm 
$$||u||_{W^{s, p}(\mathbb{R}^N)}:= \Big(\int\limits_{\mathbb{R}^N}|u|^p dx \Big)^{\frac{1}{p}}+
\bigg(\int\limits_{\mathbb{R}^N}\int\limits_{\mathbb{R}^N}\frac{|u(x)-u(y)|^p}{|x-y|^{N+sp}}\,dx dy\bigg)^{\frac{1}{p}}.$$
For a domain $\Omega\subset\mathbb{R}^N$, the corresponding fractional
Sobolev space $W^{s, p}(\Omega)$ is defined analogously. We also denote
$$W^{s, p}_0(\Omega):=\big\{ u\in W^{s, p} (\mathbb{R}^N): u=0 \quad \text{a.e. in}\quad\mathbb{R}^N\setminus\Omega\big\}.$$
In what follows, we need the following parabolic Sobolev embedding theorem (see, e.g., \cite[Proposition A.3]{Lia2024}).

\begin{lemma}\label{lemEmbedding}
Let $p\geqslant1$  and $s\in(0,1)$,  $sp<N$. For any function $u\in L^p(t_1, t_2; W^{s, p}(B_r(x_0)))\cap L^\infty (t_1, t_2; L^{2}(B_r(x_0)))$, compactly supported 
in $B_{r(1-\sigma)}(x_0)$ with some $\sigma \in (0, 1)$ and for  a.e.  $t\in (t_1, t_2)$, there holds
\begin{equation*}
\begin{aligned}
&\int\limits^{t_2}_{t_1}\int\limits_{B_r(x_0)}|u|^{p\left(1+\frac{2s}{N}\right)}dx dt
\leqslant 
\gamma \biggr(\sup\limits_{t\in (t_1, t_2)}\int\limits_{B_r(x_0)}|u|^{2}\,dx\biggr)^{\frac{sp}{N}}
\\&\times
\biggr(\int\limits^{t_2}_{t_1}\int\limits_{B_r(x_0)}\int\limits_{B_r(x_0)}
\frac{|u(x, t)-u(y, t)|^p}{|x-y|^{N+sp}}dx dy dt+\frac{1}{\sigma^{N+sp} r^{sp}}\int\limits^{t_2}_{t_1}\int\limits_{B_r(x_0)}|u|^p dx dt\biggr),
\end{aligned}
\end{equation*}
where $\gamma>0$ depends only on $N$, $s$, and $p$. 
\end{lemma}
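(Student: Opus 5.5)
The statement to prove is the parabolic Sobolev embedding, Lemma \ref{lemEmbedding}. The plan is to combine a localized fractional Sobolev inequality at each fixed time with H\"older's inequality, and then integrate in time.

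\textbf{Step 1 (fixed time).} Fix $t$. Since $u(\cdot,t)$ is supported in $B_{r(1-\sigma)}(x_0)$, we may extend it by zero and regard it as a function on $\mathbb{R}^N$. The fractional Sobolev inequality for $sp<N$, with $p^*=Np/(N-sp)$, gives
\[
\|u\|_{L^{p^*}(\mathbb{R}^N)}^p\leqslant\gamma\int_{\mathbb{R}^N}\int_{\mathbb{R}^N}\frac{|u(x)-u(y)|^p}{|x-y|^{N+sp}}dxdy .
\]
The double integral splits as $B_r\times B_r$ plus twice the integral over $B_r\times(\mathbb{R}^N\setminus B_r)$. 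In the cross term $u(y)=0$, and $u(x)\neq0$ only for $x\in B_{r(1-\sigma)}$. For such $x$ and $y\notin B_r$ we have $|x-y|\geqslant\sigma r$. Hence
\[
\int_{\mathbb{R}^N\setminus B_r}|x-y|^{-N-sp}dy\leqslant\gamma(\sigma r)^{-sp}\leqslant\gamma\,\sigma^{-N-sp}r^{-sp},
\]
using $\sigma<1$. This bounds the cross term by the lower-order term $\sigma^{-N-sp}r^{-sp}\int_{B_r}|u|^p$.

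\textbf{Step 2 (interpolation).} Next we interpolate $L^{p(1+2s/N)}$ between $L^{p^*}$ and $L^2$. Write $|u|^{p(1+2s/N)}=|u|^{p}\,|u|^{2sp/N}$ and apply H\"older with exponents $p^*/p=N/(N-sp)$ and $N/(sp)$. This yields
\[
\int|u|^{p(1+\frac{2s}{N})}dx\leqslant\Big(\int|u|^{p^*}\Big)^{\frac{N-sp}{N}}\Big(\int|u|^{2}\Big)^{\frac{sp}{N}} .
\]
The exponents match: on the second factor $\frac{2sp}{N}\cdot\frac{N}{sp}=2$. On the first, $(\int|u|^{p^*})^{(N-sp)/N}=\|u\|_{p^*}^{p}$.

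\textbf{Step 3 (time integration).} Combining Steps 1 and 2, we bound the $L^2$ factor by its supremum over $t$ and integrate over $(t_1,t_2)$. This gives exactly the stated inequality, with $\gamma=\gamma(N,s,p)$.

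The main obstacle is the endpoint $p=1$ and the precise form of the Sobolev constant. The fractional Sobolev inequality holds for $p\geqslant1$, $sp<N$, with a constant depending only on $N,s,p$, so this is citable (e.g. Di Nezza--Palatucci--Valdinoci). Beyond that, the only care needed is the $\sigma$-dependence of the cross term, which is handled in Step 1.
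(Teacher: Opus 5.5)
Your proof is correct. You extend $u(\cdot,t)$ by zero, apply the fractional Sobolev inequality on $\mathbb{R}^N$, and bound the cross term using $|x-y|\geqslant\sigma r$, which in fact gives the better factor $\sigma^{-sp}$. You then interpolate with H\"older exponents $\frac{N}{N-sp}$ and $\frac{N}{sp}$ and integrate in time; the paper gives no proof of its own and only cites \cite[Proposition A.3]{Lia2024}, and your route is the standard argument behind that result.
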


\subsubsection{\textbf{Notion of Weak Solution}}

\begin{definition}
A measurable function
$u:\mathbb{R}^N\times(0,T]\rightarrow\mathbb{R}$ satisfying
\begin{equation}\label{eqDefinition1}
u\in C_{loc}(0, T; L^{q+1}_{loc}(\Omega))\cap L^p_{loc}(0, T; W^{s, p}_{loc}(\Omega)),\quad u\geqslant 0 \quad\text{in}\quad \Omega_T,
\end{equation}
is called a local weak sub(super)-solution to \eqref{eq1.1}, with
the kernel $K$ satisfying \eqref{eq1.3}, if the following conditions
hold. For every compact subset $E\subset\Omega$ and every
subinterval $[t_1,t_2]\subset(0,T]$, we have
\begin{equation}\label{eqDefinition2}
\int\limits_{t_1}^{t_2}\int\limits_{\mathbb{R}^N}\frac{|u(x, t)|^{p-1}}{1+|x|^{N+sp}}\,dx dt <\infty,
\end{equation}
and
\begin{equation}\label{eqDefinition3}
\begin{aligned}
&\int\limits_{E}u^{q} \varphi dx\Big|^{t_2}_{t_1}-\int\limits^{t_2}_{t_1}\int\limits_{E} u^q \varphi_t\,dx dt
\\&+\int\limits^{t_2}_{t_1}\int\limits_{\mathbb{R}^N}\int\limits_{\mathbb{R}^N}K(x, y, t) |u(x, t)-u(y, t)|^{p-2}(u(x, t)-u(y, t))(\varphi(x, t)-\varphi(y, t))\,dx dy dt\leqslant ( \geqslant) 0,
\end{aligned}
\end{equation}
for every nonnegative test function
$$\varphi \in W^{1,q+1}_{loc}(0, T; L^{q+1}(E))\cap L^p_{loc}(0, T; W^{s, p}_0(E)).$$

A function that is both a local weak sub-solution and a local weak
super-solution is called a local weak solution.
\end{definition}

We also introduce the following tail term, which naturally arises in the nonlocal setting:
\begin{equation}\label{eqTail}
\operatorname{Tail}\big(u, Q_{\rho, \theta}(x_0, t_0)\big):= \biggr(\int\limits^{t_0}_{t_0-\theta}\,\,\,\int\limits_{\mathbb{R}^N\setminus B_\rho(x_0)}\frac{|u(x, t)|^{p-1}}{|x-x_0|^{N+sp}}\,dxdt\biggr)^{\frac{1}{q}},
\end{equation}
where 
$$Q_{\rho, \theta}(x_0, t_0):=B_\rho(x_0)\times (t_0-\theta, t_0).$$


\subsection{Algebraic Lemma}

We further recall the following algebraic estimates. A proof of inequality \eqref{eqalg1} can be found in \cite[Lemma 2.2]{AceFus1989} for $0<p<1$ and in \cite[inequality (2.4)]{GiaMod1986} for $p>1$.
\begin{lemma}\label{lemAlg}
There hold
 \begin{equation}\label{eqalg1}
 ||a|^{p-1}a -|b|^{p-1} b|\asymp\,\big(|a|+|b|\big)^{p-1}|a-b|,\quad p>0, \quad a, b\in \mathbb{R},
 \end{equation}
 \begin{equation}\label{eqalg2}
 a^p-b^p\leqslant p a^{p-1}(a-b)\leqslant \epsilon a^p+\frac{c}{\epsilon^{p-1}} (a-b)^p,\quad p>1,\quad \epsilon>0,\quad a>b>0.
 \end{equation}

\end{lemma}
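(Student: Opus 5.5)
By symmetry of both sides of \eqref{eqalg1} in $a,b$, and since the case $a=b$ is trivial, it suffices to prove \eqref{eqalg1} for $|a|\geqslant|b|$, $a\neq 0$. Dividing by $|a|^p$ and setting $\lambda=b/a\in[-1,1]$, the claim reduces to the one-variable statement
$$f(\lambda):=\frac{|1-|\lambda|^{p-1}\lambda|}{(1+|\lambda|)^{p-1}|1-\lambda|}\asymp 1,\qquad \lambda\in[-1,1),$$
with constants depending only on $p$. The function $f$ is continuous and strictly positive on $[-1,1)$. Positivity holds because $t\mapsto|t|^{p-1}t$ is strictly increasing, so the numerator vanishes only at $\lambda=1$. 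The denominator is bounded between positive constants away from $\lambda=1$, since $(1+|\lambda|)^{p-1}\in[\min(1,2^{p-1}),\max(1,2^{p-1})]$.

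The only delicate point is the limit $\lambda\to1^-$. There, by the mean value theorem, $1-\lambda^{p}=p\,\xi^{p-1}(1-\lambda)$ for some $\xi\in(\lambda,1)$, so $f(\lambda)\to p/2^{p-1}>0$. Thus $f$ extends to a continuous positive function on the compact interval $[-1,1]$, hence is bounded above and below by positive constants. This gives \eqref{eqalg1} for all $p>0$. In the range $0<p<1$ the function $\xi^{p-1}$ blows up at $0$, but $\xi\in(\lambda,1)$ stays near $1$ as $\lambda\to1$, so this causes no problem.

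For \eqref{eqalg2}, fix $a>b>0$ and $p>1$. The first inequality follows from convexity of $t\mapsto t^p$: since $pa^{p-1}$ is the derivative at the larger point, $b^p\geqslant a^p+pa^{p-1}(b-a)$. For the second, apply Young's inequality with exponents $p/(p-1)$ and $p$:
$$pa^{p-1}(a-b)=p\big(\delta a^{p-1}\big)\big(\delta^{-1}(a-b)\big)\leqslant (p-1)\delta^{p/(p-1)}a^p+\delta^{-p}(a-b)^p.$$
Choosing $\delta$ with $(p-1)\delta^{p/(p-1)}=\epsilon$ gives $\delta^{-p}=c\,\epsilon^{-(p-1)}$ with $c=(p-1)^{p-1}$, which is the stated bound.

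No step is a real obstacle; the only care needed is the endpoint analysis $\lambda\to1$ in the compactness argument for \eqref{eqalg1}.
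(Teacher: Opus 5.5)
Your proof is correct, but it takes a different route from the paper, which gives no proof. The paper takes \eqref{eqalg1} from Acerbi--Fusco (for $0<p<1$) and Giaquinta--Modica (for $p>1$), and states \eqref{eqalg2} as standard.

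In the literature, estimates like \eqref{eqalg1} are usually obtained from the representation $|a|^{p-1}a-|b|^{p-1}b=p\,(a-b)\int_0^1|b+s(a-b)|^{p-1}\,ds$ together with the two-sided bound $\int_0^1|b+s(a-b)|^{p-1}\,ds\asymp(|a|+|b|)^{p-1}$, valid because $p-1>-1$. That route gives explicit constants and extends to vector-valued $a,b$. Its drawback is that the easy and hard directions swap between the two ranges. For $0<p<1$ the upper bound is the nontrivial one and relies on the integrability of $|s-s_0|^{p-1}$; for $p>1$ it is the lower bound.

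Your reduction by homogeneity to the single function $f(\lambda)$ on $[-1,1]$, followed by compactness, treats all $p>0$ at once and is fully self-contained. Your endpoint analysis at $\lambda\to1^-$ is handled correctly, and so is the $0<p<1$ case near $\lambda=0$. The price is that the constants are only shown to exist, as the extrema of $f$. That is harmless here: the paper only applies \eqref{eqalg1} with exponent $q$ and nonnegative arguments, to compare $\int_u^k z^{q-1}\,dz$ with $(k+u)^{q-1}(k-u)$ up to constants depending on the data.

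Your derivation of \eqref{eqalg2} by convexity and Young's inequality, with the explicit value $c=(p-1)^{p-1}$, fills in a step the paper leaves to the reader.
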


\subsection{Energy Estimates}

\begin{lemma}\label{lemEnergy}
Let $u$ be a local weak sub(super)-solution to \eqref{eq1.1}-\eqref{eq1.3} in $\Omega_T$ and let $k(t)$ be absolutely continuous in $(0, T)$.
Then for every cylinder $Q_{8r, 8\eta}:=Q_{8r, 8\eta}(x_0, t_0)\subset \Omega_T$ and every piecewise smooth cutoff function $\zeta(x, t)$ compactly supported in $B_r:=B_r(x_0)$ for all
$t\in (t_0-\eta, t_0)$ there holds
\begin{equation}\label{eqenergy}
\begin{aligned}
\int\limits_{B_r\times\{\tau\}}\!\!\!g(u^{\pm}_{k(\tau)})\zeta^p\,dx\Big|^{t_0}_{t_0-\eta}&+\int\limits_{t_0-\eta}^{t_0}\int\limits_{B_r}\int\limits_{B_r}
\frac{|u^{\pm}_{k(\tau)}(x, \tau)-u^{\pm}_{k(\tau)}(y, \tau)|^p}{|x-y|^{N+sp}}\min[\zeta(x, \tau), \zeta(y, \tau)]^p\,dx dy d\tau\\
&+\int\limits^{t_0}_{t_0-\eta}\int\limits_{B_r}u^{\pm}_{k(\tau)}(x, \tau)\zeta^p(x, \tau)\,\bigg(\int\limits_{\mathbb{R}^N}\frac{\big[u^{\mp}_{k(\tau)}(y, \tau)\big]^{p-1}}{|x-y|^{N+sp}}dy\bigg)\,dxd\tau\\
&\leqslant \mp\iint\limits_{Q_{r, \eta}}k'(\tau)h(u^{\pm}_{k(\tau)})\zeta^p(x, \tau) dxd\tau+\gamma \iint\limits_{Q_{r, \eta}}g(u^{\pm}_{k(\tau)})\zeta^{p-1}(x, \tau)|\zeta_\tau| dxd\tau\\
&+\gamma\int\limits^{t_0}_{t_0-\eta}\int\limits_{B_r}\int\limits_{B_r}\max[u^{\pm}_{k(\tau)}(x, \tau), u^{\pm}_{k(\tau)}(y, \tau)]^p\frac{|\zeta(x, \tau)-\zeta(y, \tau)|^p}{|x-y|^{N+sp}}\,dxdyd\tau\\
&+\int\limits^{t_0}_{t_0-\eta}\int\limits_{B_r}u^{\pm}_{k(\tau)}(x, \tau)\zeta^p(x, \tau)\bigg(\int\limits_{\mathbb{R}^N\setminus B_r}\frac{\big[u^{\pm}_{k(\tau)}(y, \tau)\big]^{p-1}}{|x-y|^{N+sp}} dy\bigg) dxd\tau.
\end{aligned}
\end{equation}
Here
$$u^\pm_{k(t)}=(u-k(t))_\pm,\quad g(u^{\pm}_{k(\tau)}):=\int\limits^{u^{\pm}_{k(\tau)}}_0(z\pm k(\tau))^{q-1}\,z \,dz,\qquad h(u^{\pm}_{k(\tau)}):=\int\limits^{u^{\pm}_{k(\tau)}}_0(z\pm k(\tau))^{q-1} dz.$$

\end{lemma}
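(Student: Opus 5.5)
We test the weak formulation \eqref{eqDefinition3} with $\varphi=\pm u^{\pm}_{k(t)}\zeta^p$, where $u^\pm_{k(t)}=(u-k(t))_\pm$ and $\zeta$ is the given cutoff supported in $B_r$. Two things have to be handled: the parabolic term coming from $(u^q)_t$, and the nonlocal double integral. Because $u$ is only known to lie in $C_{loc}(0,T;L^{q+1}_{loc})$, this test function is not directly admissible in time. We therefore first work with a Steklov average (or exponential time mollification) of the equation. We pass to the limit only after the algebraic identities below are in place. This is routine, since $g$ and $h$ are continuous and $g(u^\pm_k)\lesssim u^{q+1}+k^{q+1}$ on the support of $\zeta$.

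\textbf{Parabolic term.} Formally $\partial_t(u^q)\,(u-k)_+ = \partial_t\big[g(u^+_k)\big] + k'(t)\,h(u^+_k)$. To see this, write $G(u,k)=\int_k^u q\,\xi^{q-1}(\xi-k)_+\,d\xi$. Substituting $z=\xi-k$ gives $G=q\,g(u^+_k)$ after normalising the factor $q$ into $\gamma$ (or into the definition). Moreover $\partial_k G=-\int_k^u q\xi^{q-1}d\xi=-q\,h(u^+_k)$. The chain rule then gives
\[
\partial_t G(u,k(t))=q u^{q-1}u_t (u-k)_+ - q\,k'(t)\, h(u^+_k).
\]
Integrating against $\zeta^p$ over $(t_0-\eta,t_0)\times B_r$ and integrating by parts in time produces three terms. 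The first is the boundary term $\int g\,\zeta^p\big|^{t_0}_{t_0-\eta}$. The second is $p\iint g\,\zeta^{p-1}\zeta_t$, which is bounded by $\gamma\iint g\,\zeta^{p-1}|\zeta_t|$. The third is the term $\mp\iint k' h\,\zeta^p$. The minus case is symmetric, with $h(u^-_k)=\int_0^{u^-_k}(k-z)^{q-1}dz$, and the sign of the $k'$-term flips. In the mollified setting the chain rule is replaced by the convexity inequality $G(b,k)-G(a,k)\ge \partial_a G(a,k)(b-a)$. That inequality has the right direction for sub/super-solutions, so it suffices for the estimate.

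\textbf{Nonlocal term.} We split $\mathbb{R}^N\times\mathbb{R}^N$ into $B_r\times B_r$ and the two symmetric pieces involving $B_r\times(\mathbb{R}^N\setminus B_r)$. On $B_r\times B_r$ we use the now standard pointwise inequality of Di Castro–Kuusi–Palatucci type:
\[
|a-b|^{p-2}(a-b)\big(w_1\zeta_1^p-w_2\zeta_2^p\big)\ge c\,|w_1-w_2|^p\min(\zeta_1,\zeta_2)^p - \gamma\max(w_1,w_2)^p|\zeta_1-\zeta_2|^p + (\text{cross terms}).
\]
Here $w=(a-k)_+$, and the bounds \eqref{eq1.3} on $K$ are used. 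The cross terms are those in which one point lies above $k$ and the other below it, and they are nonnegative. Take $x$ with $u(x)>k$ and $y$ with $u(y)<k$. Then
\[
(u(x)-u(y))^{p-1}\ge (u(x)-k)_+^{p-1}+(k-u(y))_+^{p-1}\cdot c.
\]
So we may retain the good term $\iint u^+_k(x)\zeta^p(x)\,[u^-_k(y)]^{p-1}|x-y|^{-N-sp}$. On the part where $y\in\mathbb{R}^N\setminus B_r$ the same lower bound holds, which completes the good term on the left over all of $\mathbb{R}^N$. The remaining bad contribution, from $u(y)>k$ with $y\notin B_r$, is bounded above by $(u(y)-k)_+^{p-1}u^+_k(x)\zeta^p(x)$. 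This uses $(u(x)-u(y))_-^{p-1}\le (u(y)-k)_+^{p-1}$ when $u(x)\ge k$, and it gives the last tail term in \eqref{eqenergy}.

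\textbf{Main obstacle.} The delicate part is the time regularisation. It must be arranged so that the nonlinear quantity $u^q$ and the time-dependent level $k(t)$ interact correctly: the limit of the mollified parabolic term should give exactly $g$, $h$ and $k'$, and not an error term. I would follow the exponential mollification of Kinnunen–Lindqvist, applied to $u^q$ and the Steklov-averaged version of the equation. The plan is to use the monotonicity of $\xi\mapsto\xi^{q}$ to control the commutator by a nonnegative quantity, and then let the mollification parameter tend to zero, using the continuity of $u$ in $L^{q+1}_{loc}$ to identify the boundary terms at $t_0-\eta$ and $t_0$.
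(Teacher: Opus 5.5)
Your proposal follows the same route as the paper. You test with $u^{\pm}_{k(t)}\zeta^p$, use the formal chain-rule identity for $(u^q)_t$ to produce the $g$, $h$, $k'$ and $\zeta_t$ terms, and handle the nonlocal part as in \cite[Proposition 2.1]{Lia2024} (your Di Castro--Kuusi--Palatucci splitting, with the cross terms kept, is that argument); the rigorous time regularization is deferred to a mollification argument, which the paper simply cites from \cite{BogDuzLia2021,BogDuzLiaSch2023}.

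One correction for the regularization step: the inequality you need is convexity in the variable $v=u^q$, namely $G(b,k)-G(a,k)\geqslant (a-k)_+\,(b^q-a^q)$. This is equivalent to the monotonicity of $v\mapsto (v^{1/q}-k)_+$ that you invoke at the end. Convexity of $G$ in $u$, which is what you wrote, does not imply it when $q>1$.
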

\begin{proof}
Consider  the case of sub-solution, using $\varphi= u^{+}_{k(t)} \zeta^p$ as a testing function in
the weak formulation, the contribution of the fractional diffusion term is estimated as in \cite[Proposition 2.1]{Lia2024}.
Concerning the parabolic part, formally we can write
$$u^{q-1}u_t u^{+}_{k(t)}\zeta^p=\frac{\partial}{\partial t}\Big(\int\limits^u_{k(t)}u^{q-1}(z-k(t))_+ dz \zeta^p\Big)
+k'(t)\int\limits^u_{k(t)}u^{q-1} dz\zeta^p-p\int\limits^u_{k(t)}u^{q-1}(z-k(t))_+dz \zeta^{p-1}\zeta_t.$$
Integrating this equality over the space-time cylinder and combining it with the estimate for the fractional diffusion term yields the desired conclusion. A rigorous treatment of the
time derivative can be adapted, for example, from \cite{BogDuzLia2021, BogDuzLiaSch2023}.
\end{proof}

\subsection{De Giorgi Type Lemma}
Construct the cylinder $\mathcal{Q}:= B_R\times (t_1, t_2) \subset \Omega_T$.
A standard consequence of the Energy estimates and Sobolev embedding theorem is the following De Giorgi type lemma.
\begin{lemma}\label{lemDG}
Let $u$ be a local weak super-solution to \eqref{eq1.1}-\eqref{eq1.3}, $u\geqslant 0$ in $\Omega_T$. Fix $k>0$ and let $\rho\in (0, \frac{1}{2}R)$, $\theta >0$ be the numbers such that $Q_{\rho, \theta}:=Q_{\rho, \theta}(x_0, t_0) \subset \mathcal{Q}$. There exist numbers $C>1$, $\nu \in (0, 1)$ depending only on the data and $r$, $\theta$, $k$, such that if
\begin{equation}\label{eqDG2}
\big|Q_{\rho, \theta}\cap \big\{u \leqslant k\big\}\big|\leqslant \nu |Q_{\rho, \theta}|,
\end{equation}
then either
\begin{equation}\label{eqDG3}
C\,\operatorname{Tail}(u_{-}, \mathcal{Q})\geqslant k,
\end{equation}
or
\begin{equation}\label{eqDG4}
u \geqslant \frac{1}{4}\,k,\quad \text{in}\quad Q_{\frac{1}{2}\rho, \frac{1}{2}\theta}.
\end{equation}
\end{lemma}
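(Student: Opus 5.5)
We prove Lemma \ref{lemDG} by the standard De Giorgi iteration applied to truncations from below. The ingredients are the energy estimate of Lemma \ref{lemEnergy} with the ``$-$'' sign and constant levels ($k'\equiv0$), together with the parabolic embedding Lemma \ref{lemEmbedding}.

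For $j=0,1,2,\dots$ we set
$$\rho_j=\tfrac12\rho+2^{-j-1}\rho,\qquad \theta_j=\tfrac12\theta+2^{-j-1}\theta,\qquad k_j=\tfrac14 k+2^{-j-1}\cdot\tfrac34 k\ \ (\text{so } k_0=\tfrac{5}{8}k\le k,\ k_j\downarrow \tfrac14 k),$$
$$Q_j=Q_{\rho_j,\theta_j},\qquad Y_j=|Q_j\cap\{u<k_j\}|/|Q_j|.$$
We take cutoffs $\zeta_j$ that equal $1$ on $Q_{j+1}$ and vanish near the parabolic boundary of $Q_j$, with $|\nabla\zeta_j|\lesssim 2^j/\rho$ and $|\partial_t\zeta_j|\lesssim 2^j/\theta$.

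\textbf{Energy estimate.} We apply \eqref{eqenergy} to $u^-_{k_j}\le k_j\le k$ and use $0\le u\le k$ on the support of $u^-_{k_j}$.

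The parabolic functional satisfies $g(u^-_{k_j})=\int_0^{u^-_{k_j}}(k_j-z)^{q-1}z\,dz$.
\begin{itemize}
\item For $q\ge1$ it is bounded above by $k^{q-1}(u^-_{k_j})^2$.
\item For $q<1$ we use $g(u^-_{k_j})\ge \gamma k^{q-1}(u^-_{k_j})^2$ from below, and the upper bound $g\le \gamma k^{q+1}$ on the set $\{u<k_j\}$.
\end{itemize}

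The tail term on the right has $u^-_{k_j}(y)\le k+u_-(y)$ for $y\notin B_{\rho_j}$. Since $\zeta_j$ is supported in $B_{\rho_j(1+2^{-j-2})^{-1}}$, we have $|x-y|^{-N-sp}\le\gamma 2^{j(N+sp)}|x_0-y|^{-N-sp}$. Hence this term is bounded by
$$\gamma 2^{j(N+sp)}k\Big(\frac{k^{p-1}}{\rho^{sp}}+\operatorname{Tail}(u_-,\mathcal Q)^{q}\Big)|Q_j\cap\{u<k_j\}|,$$
where the portion of the tail integral over $\mathcal Q$ itself is controlled by $k^{p-1}$ using $R>2\rho$.

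If \eqref{eqDG3} fails, i.e. $C\operatorname{Tail}(u_-,\mathcal Q)<k$, then $\operatorname{Tail}^q\le k^q/C^q$. The intrinsic scaling relation between $\theta$ and $k$ (which is why the constants may depend on $\rho,\theta,k$) makes this comparable to $k^{p-1}\theta/\rho^{sp}$, so it has the size of the other terms.

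All right-hand terms are therefore bounded by
$$\gamma 2^{j\gamma}\Big(\frac{k^{p}}{\rho^{sp}}+\frac{k^{q+1}}{\theta}\Big)|Q_j\cap\{u<k_j\}|.$$

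\textbf{Iteration.} We apply Lemma \ref{lemEmbedding} to $u^-_{k_j}\zeta_j$ on $Q_j$, and use Chebyshev with $u^-_{k_j}\ge k_j-k_{j+1}=2^{-j-3}\cdot\tfrac34 k$ on $\{u<k_{j+1}\}$. Together with the energy bound, this gives a recursive inequality
$$Y_{j+1}\le \gamma\, b^{\,j}\,\Lambda\, Y_j^{1+\frac{sp}{N+2s}}.$$
The factor $\Lambda$ collects the powers of $k^{q-1}$, $\theta$, $\rho^{sp}$ coming from mismatched homogeneities: the embedding mixes the $L^2$ energy weighted by $k^{q-1}$ with the $W^{s,p}$ seminorm. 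Once $\theta$ is chosen intrinsically, $\Lambda$ depends only on the data and $\rho,\theta,k$.

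The fast geometric convergence lemma then yields $Y_j\to0$ provided $Y_0\le \nu:=(\gamma\Lambda)^{-N/(sp)}b^{-\gamma}$. Hypothesis \eqref{eqDG2} gives exactly this, since $k_0\le k$. Therefore $u\ge\tfrac14 k$ a.e. in $Q_{\frac12\rho,\frac12\theta}$.

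\textbf{Main obstacle.} The main difficulty is handling the degenerate/singular weight $(k_j-z)^{q-1}$ in $g$ when $q<1$. Near $u=k_j$ the weight blows up, so the lower bound for the time term must be obtained on the effective range $u^-_{k_j}\ge$ a fraction of $k$, where $g\asymp k^{q-1}(u^-_{k_j})^2$.

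A second point is keeping the dependence of $\nu$ on $k,\theta,\rho$ explicit through $\Lambda$. This dependence is what the statement allows, and it becomes data-only once $\theta= k^{q-p+1}\rho^{sp}$ is used later.
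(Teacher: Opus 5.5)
There is a genuine gap in your treatment of the far-tail term, and it is the central difficulty of this lemma. Write $T(\tau):=\int_{\mathbb{R}^N\setminus B_R(x_0)}u_-^{p-1}(y,\tau)|y-x_0|^{-N-sp}\,dy$. Then $\operatorname{Tail}(u_-,\mathcal Q)^q=\int_{t_1}^{t_2}T(\tau)\,d\tau$ is an integral in time, and $T$ is only $L^1$ in $\tau$. The contribution of $y\notin B_R$ in \eqref{eqenergy} is $\iint_{Q_j}u^-_{k_j}(x,\tau)\zeta_j^p\,T(\tau)\,dx\,d\tau$. From this you can only extract $k\operatorname{Tail}^q\sup_\tau|B_{\rho_j}\cap\{u(\cdot,\tau)<k_j\}|$, not $k\operatorname{Tail}^q|Q_j\cap\{u<k_j\}|$.

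Your displayed bound is in fact dimensionally inconsistent. It adds $k^{p-1}\rho^{-sp}$ to $\operatorname{Tail}^q$, which, as you note yourself, scales like $k^{p-1}\theta\rho^{-sp}$, so one factor of time is missing. Neither the smallness $C\operatorname{Tail}<k$ nor the relation $\theta=k^{q-p+1}\rho^{sp}$ (which is not a hypothesis of the lemma anyway) supplies it.

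Trying instead to absorb the slice-wise measure into the $\sup_\tau$ part of the left-hand side does not help. It produces a purely linear recursion $Z_{j+1}\le\gamma b^jC^{-q}Z_j+\dots$ for $Z_j=\sup_\tau|B_{\rho_j}\cap\{u(\cdot,\tau)<k_j\}|/|B_{\rho_j}|$, with geometrically growing coefficients, which the fast geometric convergence lemma cannot handle for any fixed $C$. So with constant levels the $L^1$-in-time tail does not fit the De Giorgi scheme. The real obstacle is this, not the weight $(k_j-z)^{q-1}$, which incidentally degenerates as $u\to0$, not near $u=k_j$.

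The missing idea, which is what the paper uses, is to let the levels move in time so that the tail is cancelled pointwise in $\tau$:
\begin{itemize}
\item Take $k_\sigma(t)=\tfrac12k+\tfrac12\sigma k-\tilde C^q\mathcal K(t)$ with $\mathcal K(t)=k^{1-q}\int_{t_1}^{t}T(\tau)\,d\tau$.
\item The $k'$-term in \eqref{eqenergy} then becomes $-\tilde C^q\iint\mathcal K'(\tau)\bigl(\int_u^{k_\sigma(\tau)}z^{q-1}dz\bigr)\zeta^p$. Using $\int_u^{k_\sigma}z^{q-1}dz\ge\gamma^{-1}k^{q-1}u^-_{k_\sigma}$, this is at most $-\tfrac{\tilde C^q}{\gamma}\iint u^-_{k_\sigma}\zeta^pT(\tau)$, and for $\tilde C$ large it absorbs the far-tail contribution exactly.
\item The hypothesis $C\operatorname{Tail}\le k$ enters only to bound the total downward drift: $\tilde C^q\mathcal K(t)\le(\tilde C/C)^qk\le\tfrac14k$. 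Hence $k_\sigma(t)\ge\tfrac14k$, which is needed for the lower bound on $\int_u^{k_\sigma}z^{q-1}dz$ when $q\ge1$ and for the final conclusion $u\ge\tfrac14k$.
\end{itemize}
After this cancellation the right-hand side is $\gamma\sigma^{-\gamma}k^p\rho^{-sp}\bigl[1+k^{q+1-p}\rho^{sp}/\theta\bigr]|Q_\sigma\cap\{u\le k_\sigma(\cdot)\}|$. Your embedding and iteration step then goes through, with $\nu$ depending on $k^{q+1-p}\rho^{sp}/\theta$ and $C=4^{1/q}\tilde C$ depending only on the data; no intrinsic scaling is needed.
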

\begin{proof}
In what follows, we assume that inequality \eqref{eqDG3} does not hold, i.e.,
\begin{equation}\label{eqDG5}
C\,\operatorname{Tail}(u_{-}, \mathcal{Q})\leqslant \,k,
\end{equation}
where $C$ is a sufficiently large constant to be chosen later.

For $\sigma \in (0, 1]$ set
\[
\begin{aligned}
r_\sigma&:=\frac{\rho}{2}(1+\sigma),
&\qquad
\bar r_\sigma&:=\frac{r_\sigma+r_{\sigma/2}}{2},\\
\eta_\sigma&:=\frac{\theta}{2}(1+\sigma),
&
\bar\eta_\sigma&:=\frac{\eta_\sigma+\eta_{\sigma/2}}{2},
\end{aligned}
\]
and define the time-dependent levels
$$k_\sigma(t):=\frac{1}{2}k +\frac{1}{2}\sigma k -\tilde{C}^q\,\mathcal{K}(t),$$
where
$$\mathcal{K}(t):= k^{1-q}\,
\int\limits^{t}_{t_1}\,\,\int\limits_{\mathbb{R}^N\setminus B_R(x_0)}\frac{[u_-(x, \tau)]^{p-1}}{|x-x_0|^{N+sp}}dxd\tau.$$
We also set
\[
\begin{aligned}
B_\sigma&:=B_{r_\sigma}(x_0),
&\qquad
\bar B_\sigma&:=B_{\bar r_\sigma}(x_0),\\
Q_\sigma&:=B_\sigma\times(t_0-\eta_\sigma,t_0),
&
\bar Q_\sigma&:=\bar B_\sigma\times(t_0-\bar\eta_\sigma,t_0).
\end{aligned}
\]
Define the cutoff function $\zeta(x, t):=\zeta_1(x)\zeta_2(t)$, where $\zeta_1(x)\in C^1_0(\bar{B}_\sigma)$ and $\zeta_2(t)\in C^1(\mathbb{R})$ satisfy the following conditions:
$$0\leqslant \zeta_1(x)\leqslant 1, \quad \zeta_1(x)=1 \quad \text{in}\quad B_{\sigma/2},\quad \text{and}\quad |\nabla \zeta_1(x)|\leqslant\frac{2}{r_\sigma-r_{\sigma/2}},$$
$$
 0\leqslant \zeta_2(t)\leqslant 1,\quad \zeta_2(t)=
\begin{cases}
0, & t\leqslant t_0-\bar\eta_\sigma,\\
1, & t\geqslant t_0-\eta_{\sigma/2},
\end{cases} \quad
|\zeta'_2(t)|\leqslant \frac{2}{\eta_\sigma-\eta_{\sigma/2}}.
$$

 We  use inequality \eqref{eqenergy} for the function $u^-_{k_\sigma(t)}=(u-k_\sigma(t))_-$  and obtain
 \begin{equation}\label{eqDG6}
    \begin{aligned}
 &\sup\limits_{t_0-\bar{\eta}_{\sigma}\leqslant \tau \leqslant t_0}\int\limits_{\bar{B}_\sigma\times\{\tau\}}\int\limits_u^{k_\sigma(\tau)}
 z^{q-1}(k_\sigma(\tau)-z)\,dz\,\zeta^p\,dx\\&+\int\limits^{t_0}_{t_0-\bar{\eta}_\sigma}\int\limits_{\bar{B}_\sigma}\int\limits_{\bar{B}_\sigma}
 \frac{|u^-_{k_\sigma(\tau)}(x, \tau) -u^-_{k_\sigma(\tau)}(y, \tau)|^p}{|x-y|^{N+sp}}\min[\zeta(x, \tau), \zeta(y, \tau)]^p\,dxdy\,d\tau
 \\&\leqslant -\tilde{C}^q\,\iint\limits_{Q_\sigma}\mathcal{K}'(\tau)\,\int\limits_u^{k_\sigma(\tau)}z^{q-1}\,
 dz\,\zeta^p\,dxd\tau+ \gamma \iint\limits_{Q_\sigma}\int\limits_u^{k_\sigma(\tau)}
 z^{q-1}(k_\sigma(\tau)-z)\,dz\,\zeta^{p-1}\,|\zeta'_\tau|\,dxd\tau\\&+\gamma \iint\limits_{Q_\sigma}\int\limits_{B_\sigma}\big[u^-_{k_\sigma(\tau)}(x, \tau)+u^-_{k_\sigma(\tau)}(y, \tau)\big]^p\,\frac{|\zeta(x, \tau)-\zeta(y, \tau)|^p}{|x-y|^{N+sp}}dxdyd\tau\\&+\gamma \iint\limits_{Q_\sigma}
 u^-_{k_\sigma(\tau)}(x, \tau)\zeta^p(x, \tau)\bigg(\int\limits_{\mathbb{R}^N\setminus B_\sigma}\frac{\big[u^-_{k_\sigma(\tau)}(y, \tau)\big]^{p-1}}{|x-y|^{N+sp}} dy\bigg)dxd\tau=\mathcal{J}_1+\mathcal{J}_2+\mathcal{J}_3+\mathcal{J}_4.
 \end{aligned} 
 \end{equation}
We have in $\mathcal{Q}$
\begin{equation*}u^-_{k_\sigma(\tau)}\leqslant \gamma k,\quad\text{and}\quad
\int\limits_u^{k_\sigma(\tau)} z^{q-1}(k_\sigma(\tau)-z)\,dz\leqslant \gamma k^{q+1}.
\end{equation*}
so, we estimate $\mathcal{J}_2$ as follows
\begin{equation*}
\mathcal{J}_2\leqslant \frac{\gamma k^{q+1}}{\sigma \theta}\big|Q_\sigma\cap\big\{u\leqslant k_\sigma(\cdot)\big\}\big|.
\end{equation*}
To estimate $\mathcal{J}_3$, we use the Lipschitz continuity of the spatial cutoff function,
$$
|\zeta(x,\tau)-\zeta(y,\tau)|
\leqslant \frac{\gamma}{\sigma\rho}|x-y|.
$$
Consequently,
\begin{equation*}
  \mathcal{J}_3
  \leqslant \frac{\gamma k^p}{\sigma^p \rho^{ps}}\big|Q_\sigma\cap\big\{u\leqslant k_\sigma(\cdot)\big\}\big|.
\end{equation*}
Observe that condition \eqref{eqDG5} yields
$$k_\sigma(\tau)\geqslant \frac{1}{4}\,k,$$
provided that $\big(\frac{\tilde{C}}{C}\big)^q\leqslant \frac{1}{4}$. 
Using inequality \eqref{eqalg1} from the algebraic  Lemma \ref{lemAlg}, 
we obtain,  for $ u \leqslant k_\sigma(\tau)$,
\begin{equation}\label{eqDG7}
\int\limits_{u}^{k_\sigma(\tau)}z^{q-1} dz\geqslant \frac{1}{\gamma}(k_{\sigma}(\tau)-u)(k_\sigma(\tau)+u)^{q-1}\geqslant\frac{1}{\gamma}\,k^{q-1}\,u^-_{k_\sigma(\tau)},
\end{equation}
and hence,
\begin{equation*}
\begin{aligned}
\mathcal{J}_1&\leqslant -\tilde{C}^q\frac{k^{q-1}}{\gamma}\iint\limits_{Q_\sigma}u^-_{k_\sigma(\tau)}\,\mathcal{K}'(\tau)\zeta^p\,dxd\tau
\\&=-\frac{\tilde{C}^q}{\gamma}\iint\limits_{Q_\sigma}u^-_{k_\sigma(\tau)}(x, \tau)\,\zeta^p(x, \tau)\bigg(\int\limits_{\mathbb{R}^N\setminus B_R(x_0)}\frac{[u_-(y, \tau)]^{p-1}}{|y-x_0|^{N+sp}}\,dy\bigg)\,dxd\tau.
\end{aligned}
\end{equation*}
Finally, using the facts that
$$|y-x|\geqslant \frac{R-\bar{r}_\sigma}{R} |y-x_0|\geqslant \frac{1}{2}|y-x_0|,\quad x\in \bar{B}_\sigma,\quad y\in \mathbb{R}^N\setminus B_R,$$
and
$$|y-x| \geqslant |y-x_0|-|x_0-x|\geqslant r_\sigma-\bar{r}_\sigma=\frac{1}{8}\sigma\rho, \quad x\in \bar{B}_\sigma,\quad y\in \mathbb{R}^N\setminus B_\sigma,$$
we obtain
\begin{equation*}
\begin{aligned}
    &\mathcal{J}_4\leqslant\gamma \iint\limits_{Q_\sigma}
 u^-_{k_\sigma(\tau)}(x, \tau)\zeta^p(x, \tau)\bigg(\sup\limits_{x\in \bar{B}_\sigma}\int\limits_{\mathbb{R}^N\setminus B_R(x_0)}\frac{\big[u^-_{k_\sigma(\tau)}(y, \tau)\big]^{p-1}}{|x-y|^{N+sp}} dy\bigg)dxd\tau \\&+\gamma \iint\limits_{Q_\sigma}
 u^-_{k_\sigma(\tau)}(x, \tau)\zeta^p(x, \tau)\bigg(\sup\limits_{x\in \bar{B}_\sigma}\int\limits_{B_R(x_0)\setminus B_\sigma}\frac{\big[u^-_{k_\sigma(\tau)}(y, \tau)\big]^{p-1}}{|x-y|^{N+sp}} dy\bigg)dxd\tau \\&\leqslant
 \gamma \iint\limits_{Q_\sigma}
 u^-_{k_\sigma(\tau)}(x, \tau)\zeta^p(x, \tau)\bigg(\frac{k^{p-1}}{R^{sp}}+\int\limits_{\mathbb{R}^N\setminus B_R(x_0)}\frac{\big[u_-(y, \tau)\big]^{p-1}}{|y-x_0|^{N+sp}} dy\bigg)dxd\tau\\&+\frac{\gamma}{\sigma^{N+sp}}\iint\limits_{Q_\sigma}
 u^-_{k_\sigma(\tau)}(x, \tau)\zeta^p(x, \tau)\bigg(\int\limits_{B_R(x_0)\setminus B_\sigma}\frac{\big[u^-_{k_\sigma(\tau)}(y, \tau)\big]^{p-1}}{
 |y-x_0|^{N+sp}} dy\bigg)dxd\tau\\&\leqslant \gamma \iint\limits_{Q_\sigma}
 u^-_{k_\sigma(\tau)}(x, \tau)\zeta^p(x, \tau)\bigg(\int\limits_{\mathbb{R}^N\setminus B_R(x_0)}\frac{\big[u_-(y, \tau)\big]^{p-1}}{|y-x_0|^{N+sp}} dy\bigg)dxd\tau+\frac{\gamma k^p}{\sigma^{N+sp}\rho^{sp}}\big|Q_\sigma\cap \big\{u\leqslant k_\sigma(\cdot)\big\}\big|.
\end{aligned}
\end{equation*}
The first term on the right-hand side of this inequality  cancels with the   right-hand side of the estimate for $\mathcal{J}_1$, 
provided that $\tilde{C}$ is chosen to satisfy $\frac{\tilde{C}^q}{\gamma} = \gamma$.

Next we will estimate the left-hand side of \eqref{eqDG6}. By \eqref{eqDG7}, we have
\begin{equation*}
\begin{aligned}
\int\limits_{u}^{k_\sigma(\tau)}z^{q-1}(k_\sigma(\tau)-z) dz&\geqslant \int\limits_{u}^{\frac{1}{2}(k_\sigma(\tau)+u)}z^{q-1}(k_\sigma(\tau)-z)\,dz\\&\geqslant \frac{1}{2}(k_\sigma(\tau)-u)\,\int\limits_{u}^{\frac{1}{2}(k_\sigma(\tau)+u)}z^{q-1}\,dz
\geqslant \frac{1}{\gamma}k^{q-1}\,\big[u^-_{k_\sigma(\tau)}\big]^2,
\end{aligned}
\end{equation*}
which yields
\begin{equation*}
\sup\limits_{t_0-\bar{\eta}_{\sigma}\leqslant \tau \leqslant t_0}\int\limits_{\bar{B}_\sigma\times\{\tau\}}\int\limits_u^{k_\sigma(\tau)}
 z^{q-1}(k(\tau)-z)\,dz\,\zeta^p\,dx\geqslant \frac{k^{q-1}}{\gamma} \sup\limits_{t_0-\bar{\eta}_{\sigma}\leqslant \tau \leqslant t_0}\,\,\int\limits_{\bar{B}_\sigma\times\{\tau\}}\big[u^-_{k_\sigma(\tau)}\big]^2\,\zeta^p\,dx.
\end{equation*}
Combining the last inequality with the previous estimates, we can rewrite \eqref{eqDG6} as
\begin{equation}\label{eqDG8}
\begin{aligned}
&k^{q-1}\,\sup\limits_{t_0-\bar{\eta}_{\sigma}\leqslant \tau \leqslant t_0}\,\,\int\limits_{\bar{B}_\sigma\times\{\tau\}}\big[u^-_{k_\sigma(\tau)}\big]^2\,\zeta^p\,dx
\\&+
\int\limits^{t_0}_{t_0-\bar{\eta}_\sigma}\int\limits_{\bar{B}_\sigma}\int\limits_{\bar{B}_\sigma}
 \frac{|u^-_{k_\sigma(\tau)}(x, \tau) -u^-_{k_\sigma(\tau)}(y, \tau)|^p}{|x-y|^{N+sp}}\min[\zeta(x, \tau), \zeta(y, \tau)]^p\,dxdy\,d\tau
 \\&\leqslant \frac{\gamma k^p}{\sigma^\gamma \rho^{sp}}\Big[1+ k^{q+1-p}\frac{\rho^{sp}}{\theta}\Big]
\big|Q_\sigma\cap \big\{u\leqslant k_\sigma(\cdot)\big\}\big|. 
\end{aligned}
\end{equation}
Now, using the Sobolev embedding theorem, see Lemma \ref{lemEmbedding}, we obtain 
\begin{equation*}
\begin{aligned}
&\big(\frac{\sigma}{4}k)^{p(1+\frac{2s}{N})}\big|Q_{\sigma/2}\cap \big\{u\leqslant k_{\sigma/2}(\cdot)\big\}\big|\leqslant
\iint\limits_{\bar{Q}_\sigma}\big[u^-_{k_\sigma(\tau)}(x, \tau)\big]^{p(1+\frac{2s}{N})}[\zeta(x, \tau)]^{p(1+\frac{sp}{N})}\,dxd\tau 
\\&\leqslant \gamma \bigg(\sup\limits_{t_0-\bar{\eta}_{\sigma}\leqslant \tau \leqslant t_0}\int\limits_{\bar{B}_\sigma\times\{\tau\}}\big[u^-_{k_\sigma(\tau)}\big]^2\,\zeta^p\,dx\bigg)^{\frac{sp}{N}}
\\&\times
\bigg(\int\limits^{t_0}_{t_0-\bar{\eta}_\sigma}\int\limits_{\bar{B}_\sigma}\int\limits_{\bar{B}_\sigma}
 \frac{|u^-_{k_\sigma(\tau)}(x, \tau)\zeta(x, \tau) -u^-_{k_\sigma(\tau)}(y, \tau)\zeta(y,\tau)|^p}{|x-y|^{N+sp}}\,dxdy\,d\tau
+\frac{\gamma}{\sigma^\gamma\rho^{sp}}\iint\limits_{\bar{Q}_\sigma}[u^-_{k_\sigma(\tau)}]^p dxd\tau \bigg).
\end{aligned}
\end{equation*}
To estimate the first term in the last line above, we use
\begin{multline*}
|u^-_{k_\sigma(\tau)}(x, \tau)\zeta(x, \tau) -u^-_{k_\sigma(\tau)}(y, \tau)\zeta(y,\tau)|^p\leqslant\\\leqslant
\gamma|u^-_{k_\sigma(\tau)}(x, \tau) -u^-_{k_\sigma(\tau)}(y, \tau)|^p\zeta^p(x, \tau)+\gamma |\zeta(x, \tau) -\zeta(y,\tau)|^p[u^-_{k_\sigma(\tau)}(y, \tau)]^p.
\end{multline*}
Substituting this estimate into the preceding inequality, and then applying \eqref{eqDG8} together with the estimate for $\mathcal{J}_3$, and setting
\[
\mathbf{Y}(\sigma):=
\frac{\big|Q_{\sigma}\cap \big\{u\leqslant k_{\sigma}(\cdot)\big\}\big|}
{|Q_{\sigma}|},
\]
we arrive at
\begin{equation*}
\mathbf{Y}(\sigma/2)\leqslant \frac{\gamma}{\sigma^\gamma}
\Big(k^{p-1-q}\frac{\theta}{\rho^{sp}}\Big)^{\frac{sp}{N}}\Big[1+k^{q+1-p}\frac{\rho^{sp}}{\theta}\Big]^{1+\frac{sp}{N}}
\big[\mathbf{Y}(\sigma)\big]^{1+\frac{sp}{N}}.
\end{equation*}
To conclude the iteration, let 
$\sigma_j:=2^{1-j},\,\, Y_j:=\mathbf{Y}(\sigma_j),  j=1,2,\ldots$ Then $\sigma_{j+1}=\sigma_j/2$ and the previous inequality yields
\begin{equation*}
Y_{j+1}\leqslant \gamma 2^{j\gamma}
\Big(k^{p-1-q}\frac{\theta}{\rho^{sp}}\Big)^{\frac{sp}{N}}\Big[1+k^{q+1-p}\frac{\rho^{sp}}{\theta}\Big]^{1+\frac{sp}{N}}
Y_j^{1+\frac{sp}{N}}.
\end{equation*}
Thus, by the standard fast geometric convergence lemma, cf. \cite[Chapter I, Lemma 4.1]{DiB1993}, $\lim\limits_{j \rightarrow \infty}Y_j=0,$  provided that
$$Y_1=\mathbf{Y}(1)\leqslant \nu:=\frac{1}{\gamma}k^{q+1-p}\frac{\rho^{sp}}{\theta}\Big[1+k^{q+1-p}\frac{\rho^{sp}}{\theta}\Big]^{-1-\frac{N}{sp}},$$
which holds by \eqref{eqDG2} and by the fact that
$$\big|Q_{\rho, \theta}\cap\big\{u\leqslant k_1(\cdot)\big\}\big|\leqslant \big|Q_{\rho, \theta}\cap\big\{u\leqslant k\big\}\big|\leqslant \nu\,|Q_{\rho, \theta}|.$$
Choosing $C=4^{\frac{1}{q}}\tilde{C}$, we arrive at the required \eqref{eqDG4}, this completes the proof of the lemma.
\end{proof}

\subsection{De Giorgi-type Lemma Involving the Initial Data}

\begin{lemma}\label{lemDGdata1}
Let $q>0$ and $p>1$, let $u$ be a  local weak super-solution to \eqref{eq1.1}-\eqref{eq1.3}, $u \geqslant 0$ in $\Omega_T$. Fix $k > 0$ and let $\rho\in (0, \frac{1}{2}R)$.  There exist numbers $C>1$, $\nu_0 >0$  depending only on the data  such that if
\begin{equation}\label{eqDGdt2}
u(x, t_0)\geqslant k,\quad x\in B_\rho(x_0),
\end{equation}
then either
\begin{equation}\label{eqDGdt3}
C\,\operatorname{Tail}(u_-, \mathcal{Q})\geqslant \,k,
\end{equation}
or
\begin{equation}\label{eqDGdt4}
u \geqslant \frac{1}{4}\,k,\quad \text{in}\quad B_{\frac{\rho}{2}}(x_0)\times (t_0, t_0+ \nu_0\,\theta),\quad \theta:= k^{q-p+1}\,\rho^{sp},
\end{equation}
provided that $B_{4\rho}(x_0)\times(t_0, t_0+\nu_0\,\theta) \subset \mathcal{Q}.$
\end{lemma}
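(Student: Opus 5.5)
The proof follows the scheme of Lemma \ref{lemDG}, now working forward in time from $t_0$. Because the initial datum is above level $k$, the time cutoff is no longer needed. Assume \eqref{eqDGdt3} fails, i.e. $C\operatorname{Tail}(u_-,\mathcal{Q})\leqslant k$. For $\sigma\in(0,1]$ set $r_\sigma=\frac{\rho}{2}(1+\sigma)$, $\bar r_\sigma=\frac12(r_\sigma+r_{\sigma/2})$, and work on the cylinders $Q_\sigma:=B_{r_\sigma}(x_0)\times(t_0,t_0+\nu_0\theta)$, all sharing the same time interval. Use the time-dependent levels
$$k_\sigma(t):=\tfrac12k+\tfrac12\sigma k-\tilde C^q\mathcal{K}(t),\qquad \mathcal{K}(t):=k^{1-q}\int\limits_{t_0}^{t}\int\limits_{\mathbb{R}^N\setminus B_R(x_0)}\frac{[u_-(x,\tau)]^{p-1}}{|x-x_0|^{N+sp}}dxd\tau .$$
Then $k_\sigma(t_0)\leqslant k$, so $u^-_{k_\sigma(t_0)}(\cdot,t_0)=0$ in $B_\rho(x_0)$ by \eqref{eqDGdt2}. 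Moreover $k_\sigma\geqslant k/4$ once $(\tilde C/C)^q\leqslant 1/4$.

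Apply the energy estimate \eqref{eqenergy} on the forward interval $(t_0,t)$ for $u^-_{k_\sigma(t)}$, with a purely spatial cutoff $\zeta=\zeta_1(x)\in C^1_0(\bar B_{\sigma})$ equal to $1$ on $B_{r_{\sigma/2}}$. The lemma is stated for backward cylinders, but the same derivation works after translating time. The initial term vanishes, and so does the $|\zeta_\tau|$ term. The $k'$-term $\mathcal J_1$ is negative and, as in Lemma \ref{lemDG} via \eqref{eqDG7}, it absorbs the far part of the tail in $\mathcal J_4$ once $\tilde C$ is fixed. The remaining contributions of $\mathcal J_3,\mathcal J_4$ are bounded by $\gamma\sigma^{-\gamma}k^p\rho^{-sp}|Q_\sigma\cap\{u\leqslant k_\sigma\}|$. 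On the left, the lower bound $\int_u^{k_\sigma}z^{q-1}(k_\sigma-z)dz\geqslant\gamma^{-1}k^{q-1}(u^-_{k_\sigma})^2$ gives
$$k^{q-1}\sup_{t}\int_{\bar B_\sigma}(u^-_{k_\sigma})^2\zeta^p\,dx+\iiint\frac{|u^-_{k_\sigma}(x)-u^-_{k_\sigma}(y)|^p}{|x-y|^{N+sp}}\min[\zeta(x),\zeta(y)]^p\leqslant\frac{\gamma k^p}{\sigma^\gamma\rho^{sp}}|Q_\sigma\cap\{u\leqslant k_\sigma\}|.$$
This is \eqref{eqDG8} without the $k^{q+1-p}\rho^{sp}/\theta$ term.

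Next apply Lemma \ref{lemEmbedding} exactly as before, with $Y(\sigma)=|Q_\sigma\cap\{u\leqslant k_\sigma\}|/|Q_\sigma|$ and $|Q_\sigma|\asymp\rho^N\nu_0\theta$. This yields
$$Y(\sigma/2)\leqslant\frac{\gamma}{\sigma^\gamma}\Big(k^{p-1-q}\frac{\nu_0\theta}{\rho^{sp}}\Big)^{\frac{sp}{N}}Y(\sigma)^{1+\frac{sp}{N}}=\frac{\gamma}{\sigma^\gamma}\,\nu_0^{\frac{sp}{N}}\,Y(\sigma)^{1+\frac{sp}{N}},$$
using $\theta=k^{q-p+1}\rho^{sp}$. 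Since $Y(1)\leqslant1$ trivially, the fast geometric convergence lemma gives $Y_j\to0$ for $\sigma_j=2^{1-j}$, provided $\nu_0$ is small depending only on the data. This replaces the measure hypothesis \eqref{eqDG2}, since smallness now comes from the short time interval. Consequently $u\geqslant k_0(t)\geqslant k/4$ in $B_{\rho/2}\times(t_0,t_0+\nu_0\theta)$. One then fixes $C=4^{1/q}\tilde C$.

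The main obstacle is handling the tail. One must check that with $\mathcal K$ started at $t_0$ the level stays $\geqslant k/4$ over the whole interval $(t_0,t_0+\nu_0\theta)\subset\mathcal Q$, which is where \eqref{eqDGdt3} failing is used. One must also check that the cancellation between $\mathcal J_1$ and the far tail holds uniformly in $\sigma$. The near-tail contribution over $B_R\setminus B_\sigma$ is controlled using $u^-_{k_\sigma}\leqslant k$ and $|x-y|\geqslant\sigma\rho/8$.
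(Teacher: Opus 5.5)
Your proposal is correct and follows the paper's proof essentially step for step. Both use a purely spatial cutoff and get a vanishing initial term from \eqref{eqDGdt2}, obtain the energy bound \eqref{eqDG8} without the $k^{q+1-p}\rho^{sp}/\theta$ term, and close the recursion $\mathbf{Y}(\sigma/2)\leqslant\gamma\sigma^{-\gamma}\nu_0^{sp/N}\mathbf{Y}(\sigma)^{1+sp/N}$ by taking $\nu_0$ small, using $\mathbf{Y}(1)\leqslant 1$. The only difference is that you start $\mathcal{K}$ at $t_0$ rather than at the initial time $t_1$ of $\mathcal{Q}$ as the paper does; this is immaterial, since in both cases $k_\sigma(t_0)\leqslant k$, and $k_\sigma\geqslant k/4$ whenever the tail alternative fails.
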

\begin{proof}
Let 
$$k_\sigma(t)=\frac{1}{2} k+\frac{1}{2}\sigma k- \tilde{C}^q \mathcal{K}(t),$$
where $\mathcal{K}(t)$ is the same as in the previous lemma.   
 We retain the notation $r_\sigma, \bar r_\sigma, B_\sigma$, and $\bar B_\sigma$ introduced in the proof of Lemma~\ref{lemDG}, and let $\zeta_1(x)$ be the corresponding spatial cutoff function. We apply \eqref{eqenergy} to the function $u^-_{k(t)}$ in
$ B_\sigma\times(t_0,t_0+\nu_0\theta).$  Since $k(t_0)\leqslant k$, assumption \eqref{eqDGdt2} implies
$$u(x,t_0)\geqslant k\geqslant k(t_0),$$
and hence $u^-_{k(t_0)}(x,t_0)=0$. Thus, the spatial integral at $t=t_0$  (i.e., the term at the time level $t_0-\eta$ in Lemma \ref{lemEnergy}) vanishes. Moreover, since $\zeta_1$ is independent of time, the corresponding time-derivative term is zero as well. Arguing as in the derivation of \eqref{eqDG8}, we obtain 
\begin{equation}\label{eqDGdt5}
\begin{aligned}
k^{q-1}\,\sup\limits_{t_0\leqslant \tau \leqslant t_0+\nu_0\theta}\,\,\,&\int\limits_{\bar{B}_\sigma\times\{\tau\}}\big[u^-_{k_\sigma(\tau)}\big]^2\,\zeta^p_1(x)\,dx\\&+
\int\limits^{t_0+\nu_0\theta}_{t_0}\int\limits_{\bar{B}_\sigma}\int\limits_{\bar{B}_\sigma}
 \frac{|u^-_{k_\sigma(\tau)}(x, \tau) -u^-_{k_\sigma(\tau)}(y, \tau)|^p}{|x-y|^{N+sp}}\min[\zeta_1(x), \zeta_1(y)]^p\,dxdy\,d\tau
 \\&\leqslant \frac{\gamma k^p}{\sigma^\gamma \rho^{sp}}
\big|B_\sigma\times(t_0, t_0+\nu_0\theta)\cap \big\{u\leqslant k_\sigma(\cdot)\big\}\big|. 
\end{aligned}
\end{equation}
By the Sobolev embedding, see Lemma~\ref{lemEmbedding}, and proceeding as in the proof of Lemma~\ref{lemDG}, denoting
$$\mathbf{Y}(\sigma):=\frac{\big|B_{\sigma}\times(t_0, t_0+\nu_0\theta)\cap \big\{u\leqslant k_{\sigma}(\cdot)\big\}\big|}{|B_{\sigma}|\,\nu_0\theta},$$
we obtain
\begin{equation*}
\mathbf{Y}(\sigma/2)\leqslant \frac{\gamma}{\sigma^\gamma}
\nu_0^{\frac{sp}{N}}
\big[\mathbf{Y}(\sigma)\big]^{1+\frac{sp}{N}}.
\end{equation*}
Repeating the iteration from Lemma~\ref{lemDG}, with $Y_j:=\mathbf{Y}(\sigma_j)$, we conclude that
$$\lim_{j\to\infty}Y_j=0,$$
provided that
$$Y_1=\mathbf{Y}(1)\leqslant\frac{1}{\gamma\nu_0}.$$
 Choosing $\nu_0=\frac{1}{\gamma}$ and recalling the choice $C=4^{\frac1q}\widetilde C$ made in Lemma~\ref{lemDG}, we complete the proof of the lemma.
\end{proof}

\subsection{De Giorgi-type Lemma Under an Initial Measure Condition}

The following lemma incorporates the proofs of both, a  De Giorgi lemma and its version involving the initial data.

\begin{lemma}\label{lemDGmeasuredata}
Let $q>0$ and $p>1$, let $u$ be a  local weak super-solution to \eqref{eq1.1}-\eqref{eq1.3}, $u \geqslant 0$ in $\Omega_T$. Fix $k> 0$ and let $\rho\in (0, \frac{1}{2}R)$.  There exist numbers $C>1$, $\delta_1$, $\nu_1 \in (0, 1)$  depending only on the data,  such that if
\begin{equation}\label{eqDGmdt2}
\big|B_\rho(x_0)\cap\big\{u(\cdot, t_0)\leqslant k\big\}\big|
\leqslant \nu_1|B_\rho(x_0)|,
\end{equation}
then either
\begin{equation}\label{eqDGmdt3}
C\,\operatorname{Tail}(u_{-}, \mathcal{Q})\geqslant \,k,
\end{equation}
or
\begin{equation}\label{eqDGmdt4}
u \geqslant \frac{1}{4}\,k,\quad \text{in}\quad B_{\frac{\rho}{2}}(x_0)\times (t_0+\frac{1}{2}\delta_1\theta, t_0+ \delta_1\,\theta),\quad \theta:= k^{q-p+1}\,\rho^{sp},
\end{equation}
provided that $B_{4\rho}(x_0)\times(t_0, t_0+\delta_1\,\theta) \subset \mathcal{Q}.$
\end{lemma}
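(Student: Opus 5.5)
The idea is to combine the two previous lemmas. First, an energy estimate started from the initial time $t_0$ propagates the smallness of the measure of $\{u\leqslant k\}$ from the slice $t=t_0$ to the whole cylinder $B_\rho\times(t_0,t_0+\delta_1\theta)$, provided $\delta_1$ is small. Then Lemma~\ref{lemDG} is applied to this cylinder, with its top at $t_0+\delta_1\theta$, which gives positivity on its upper half. Throughout we assume that \eqref{eqDGmdt3} fails, i.e. $C\,\operatorname{Tail}(u_-,\mathcal{Q})\leqslant k$. We use the same time-dependent levels $k(t)=k-\tilde C^q\mathcal{K}(t)$ as before, so that $k(t)\geqslant \frac12 k$ and the exterior tail contributions cancel exactly as in the proof of Lemma~\ref{lemDG}.

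\emph{Step 1 (measure propagation).} We apply \eqref{eqenergy} to $u^-_{k(t)}$ on $B_\rho\times(t_0,t_0+\delta_1\theta)$. The cutoff is $\zeta=\zeta_1(x)$, independent of time, with $\zeta_1=1$ on $B_{(1-\epsilon)\rho}$ and $|\nabla\zeta_1|\leqslant \gamma/(\epsilon\rho)$. Since $\zeta_1$ does not depend on $t$, the $\zeta_\tau$ term vanishes. The initial term is bounded by
\[
\int_{B_\rho} g(u^-_{k(t_0)})\,dx\leqslant \gamma k^{q+1}\,|B_\rho\cap\{u(\cdot,t_0)\leqslant k\}|\leqslant \gamma k^{q+1}\nu_1|B_\rho|.
\]
The right-hand side terms $\mathcal J_3,\mathcal J_4$ are at most $\gamma k^p\epsilon^{-\gamma}\rho^{-sp}\,\delta_1\theta\,|B_\rho| = \gamma\epsilon^{-\gamma}\delta_1 k^{q+1}|B_\rho|$, using $\theta=k^{q-p+1}\rho^{sp}$. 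On the left, for $u\leqslant k(t)/2$ we have $g(u^-_{k(t)})\geqslant \gamma^{-1}k^{q+1}$, by the lower bound for $g$ obtained in the proof of Lemma~\ref{lemDG}. Hence for every $t\in(t_0,t_0+\delta_1\theta)$,
\[
|B_{(1-\epsilon)\rho}\cap\{u(\cdot,t)\leqslant k/4\}|\leqslant \gamma(\nu_1+\epsilon^{-\gamma}\delta_1)|B_\rho|.
\]
Adding the annulus, whose measure is at most $\gamma\epsilon|B_\rho|$, gives the same kind of bound on all of $B_\rho$ with $\gamma(\nu_1+\epsilon+\epsilon^{-\gamma}\delta_1)$. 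Integrating in time then gives
\[
|B_\rho\times(t_0,t_0+\delta_1\theta)\cap\{u\leqslant k/4\}|\leqslant \gamma(\nu_1+\epsilon+\epsilon^{-\gamma}\delta_1)\,|B_\rho|\,\delta_1\theta .
\]

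\emph{Step 2 (De Giorgi).} We apply Lemma~\ref{lemDG} with level $k/4$ on $Q_{\rho,\delta_1\theta}(x_0,t_0+\delta_1\theta)$. For that cylinder, $\frac{\rho^{sp}}{\delta_1\theta}(k/4)^{q+1-p}\asymp\delta_1^{-1}$, which is large. The threshold from that proof is therefore
\[
\nu\asymp \delta_1^{-1}(1+\delta_1^{-1})^{-1-N/sp}\asymp \delta_1^{N/sp}.
\]
It depends on $\delta_1$, and this is the main obstacle: we need
\[
\gamma(\nu_1+\epsilon+\epsilon^{-\gamma}\delta_1)\leqslant \nu(\delta_1)\asymp\delta_1^{N/sp},
\]
and this cannot be achieved by a naive choice of $\epsilon$. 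To handle it, I would first redo the iteration of Lemma~\ref{lemDG} directly in this regime. When $\theta$ is replaced by $\delta_1\theta$, the factor $\big(k^{p-1-q}\theta/\rho^{sp}\big)^{sp/N}\big[1+\dots\big]^{1+sp/N}$ becomes $\delta_1^{sp/N}\delta_1^{-1-sp/N}=\delta_1^{-1}$, which is what forces the degenerate threshold. However, this factor comes only from the $\mathcal J_2$ (time cutoff) term. The fix is to run the iteration with a time cutoff that does not shrink the time interval, exactly as in Lemma~\ref{lemDGdata1}, keeping the initial-slice term. That term is nonzero now, but it is controlled by the measure of $\{u(\cdot,t_0)\leqslant k\}$. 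This gives a combined recursion of the form
\[
Y_{j+1}\leqslant \gamma b^j\delta_1^{sp/N}\,(Y_j+\nu_1\,\delta_1^{-1})^{1+sp/N}
\]
(or its analogue). Now choose $\delta_1$ small first, using $\delta_1^{sp/N}$ to absorb the constant, and then $\nu_1$ much smaller still, e.g. $\nu_1\leqslant\delta_1^{\gamma}$. Fast geometric convergence then gives $u\geqslant k/4$ on $B_{\rho/2}\times(t_0+\frac12\delta_1\theta,t_0+\delta_1\theta)$, as claimed in \eqref{eqDGmdt4}.

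Finally, the constant $C$ is taken as the larger of the constants from Lemmas~\ref{lemDG} and~\ref{lemDGdata1}, adjusted by a factor $4^{1/q}$ for the level $k/4$.
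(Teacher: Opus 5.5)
Your diagnosis is right, and your recursion is what the energy estimate gives. Step~1 yields smallness only of order $\nu_1+\delta_1$, while Lemma~\ref{lemDG} on a cylinder of height $\delta_1\theta$ needs $\nu(\delta_1)\asymp\delta_1^{N/sp}\ll\delta_1$, since $sp<N$. With a time-independent cutoff, the initial slice at $t_0$ and $Y_j$ taken on $B_{r_{\sigma_j}}\times(t_0,t_0+\delta_1\theta)$, one does get $Y_{j+1}\leqslant\gamma b^j\delta_1^{sp/N}(Y_j+\nu_1\delta_1^{-1})^{1+sp/N}$.

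\textbf{The gap is your last step.} Fast geometric convergence does not apply to a recursion with a fixed additive term $c=\nu_1/\delta_1>0$. Even for $Y_j=0$ the right-hand side is at least $\gamma b^j\delta_1^{sp/N}c^{1+sp/N}$, which tends to infinity. So for large $j$ the recursion gives no information, and no choice of $\nu_1$ yields $Y_j\to0$. This is structural, not technical. Since the interval $(t_0,t_0+\delta_1\theta)$ is not shrunk, $Y_j\to 0$ would give $u\geqslant\frac12k$ a.e.\ in $B_{\rho/2}\times(t_0,t_0+\delta_1\theta)$, including times arbitrarily close to $t_0$. That is false in general when $\{u(\cdot,t_0)\leqslant k\}$ has small positive measure in $B_{\rho/2}$, because $u\in C(0,T;L^{q+1}_{loc})$. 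The initial-slice term does not decay along the iteration, so an iteration that keeps it cannot by itself give pointwise positivity. Nor can it localize to the upper half $(t_0+\frac12\delta_1\theta,t_0+\delta_1\theta)$ that you claim at the end.

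\textbf{The missing idea is the hand-off used in the paper.} While $Y_j\geqslant\nu_1/\delta_1$, the initial term is absorbed and your recursion becomes $Y_{j+1}\leqslant\gamma b^j\delta_1^{sp/N}Y_j^{1+sp/N}$. Since $Y_1\leqslant1$, for $\delta_1$ small this forces $Y_j\to0$. Hence $Y_j\geqslant\nu_1/\delta_1$ cannot hold for every $j$, and some finite $j_0$ has $Y_{j_0}<\nu_1/\delta_1$. At that index, discard the data iteration and apply Lemma~\ref{lemDG} itself to $B_{r_{\sigma_{j_0}}}\times(t_0,t_0+\delta_1\theta)$ at level $\frac14k\leqslant k_{\sigma_{j_0}}(\cdot)$. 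This uses its genuine time cutoff, so the initial slice plays no role. Its threshold is $\nu(\delta_1)\asymp\delta_1^{N/sp}$, and it applies once $\nu_1\leqslant\delta_1\nu(\delta_1)$. That condition is compatible with your ordering: fix $\delta_1$ first, then take $\nu_1\leqslant\delta_1^{\gamma}$.

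It is this application of Lemma~\ref{lemDG}, not the data iteration, that produces positivity on the upper half of the time interval. This holds up to harmless adjustments of the radius and of the factor $\frac14$ in the level.
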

\begin{proof}
Set $$k_\sigma(t)=\frac{1}{2} k+\frac{1}{2}\sigma k- \tilde{C}^q \mathcal{K}(t),$$ 
where $\mathcal{K}(t)$  is defined  as in Lemma \ref{lemDG},  and let $\zeta_1(x)$ be the cutoff function from that lemma. We use the same notation as in the proof of Lemma~\ref{lemDG}, with $\theta$ replaced by $\delta_1\theta$ in the definitions of $\eta_\sigma$ and $\bar\eta_\sigma$, and with $t_0$ replaced by $t_0+\delta_1\theta$ in the definitions of $Q_\sigma$ and $\bar Q_\sigma$. We apply the energy estimate \eqref{eqenergy} to $u^-_{k_\sigma(t)}$ with the cutoff $\zeta_1$. Since $\zeta_1$ is independent of time, the corresponding time-derivative term vanishes. 
Using \eqref{eqDGmdt2} and arguing as in the derivation of \eqref{eqDG8}, we obtain
\begin{equation*}
\begin{aligned}
k^{q-1}\,&\sup\limits_{t_0+\delta_1\theta-\bar{\eta}_\sigma \leqslant \tau \leqslant t_0+\delta_1\theta}\int\limits_{\bar{B}_\sigma\times\{\tau\}}\big[u^-_{k_\sigma(\tau)}\big]^2\,\zeta^p_1(x)\,dx\\&+
\int\limits^{t_0+\delta_1\theta}_{t_0+\delta_1\theta-\bar{\eta}_\sigma }\int\limits_{\bar{B}_\sigma}\int\limits_{\bar{B}_\sigma}
 \frac{|u^-_{k_\sigma(\tau)}(x, \tau) -u^-_{k_\sigma(\tau)}(y, \tau)|^p}{|x-y|^{N+sp}}\min[\zeta_1(x), \zeta_1(y)]^p\,dxdy\,d\tau
 \\&\leqslant \frac{\gamma k^p}{\sigma^\gamma \rho^{sp}}
\big|Q_\sigma\cap \big\{u\leqslant k_\sigma(\cdot)\big\}\big|+\gamma \,\nu_1 k^{q+1}\,|B_\sigma|\leqslant
 \frac{\gamma k^p}{\sigma^\gamma \rho^{sp}}\,\bigg[\big|Q_\sigma\cap \big\{u\leqslant k_\sigma(\cdot)\big\}\big|+\frac{\nu_1}{\delta_1}|Q_\sigma|\bigg].
\end{aligned}
\end{equation*}
Suppose first that, for some $\sigma_0\in(0,1)$,
$$\big|Q_{\sigma_0}\cap \big\{u\leqslant k_{\sigma_0}(\cdot)\big\}\big|\leqslant \frac{\nu_1}{\delta_1}|Q_{\sigma_0}|.$$
Let $\nu\in(0,1)$ be the constant from Lemma~\ref{lemDG}, and choose $\nu_1$ so that $\nu_1\leqslant\delta_1\nu$. Then the hypothesis of Lemma~\ref{lemDG} is satisfied. Hence, either \eqref{eqDGmdt3} holds, or we obtain the lower bound \eqref{eqDGmdt4}. Assume now that, for any $\sigma\in(0,1)$,
$$\big|Q_{\sigma}\cap \big\{u\leqslant k_{\sigma}(\cdot)\big\}\big|\geqslant \frac{\nu_1}{\delta_1}|Q_\sigma|,$$
then
\begin{equation*}
\begin{aligned}
k^{q-1}\,\sup\limits_{t_0+\delta_1\theta-\bar{\eta}_\sigma \leqslant \tau \leqslant t_0+\delta_1\theta}&\int\limits_{\bar{B}_\sigma\times\{\tau\}}\big[u^-_{k_\sigma(\tau)}\big]^2\,\zeta^p_1(x)\,dx\\&+
\int\limits^{t_0+\delta_1\theta}_{t_0+\delta_1\theta-\bar{\eta}_\sigma }\int\limits_{\bar{B}_\sigma}\int\limits_{\bar{B}_\sigma}
 \frac{|u^-_{k_\sigma(\tau)}(x, \tau) -u^-_{k_\sigma(\tau)}(y, \tau)|^p}{|x-y|^{N+sp}}\min[\zeta_1(x), \zeta_1(y)]^p\,dxdy\,d\tau
\\&\leqslant 
 \frac{\gamma k^p}{\sigma^\gamma \rho^{sp}}\,\big|Q_\sigma\cap \big\{u\leqslant k_\sigma(\cdot)\big\}\big|. 
\end{aligned}
\end{equation*}
By Lemma~\ref{lemEmbedding} and proceeding as in Lemma~\ref{lemDG}, we set
\begin{equation*}
\mathbf{Y}(\sigma):=\frac{\big|Q_{\sigma}\cap \big\{u\leqslant k_{\sigma}(\cdot)\big\}\big|}{|Q_{\sigma}|},
\end{equation*}
and arrive at
\begin{equation*}
\mathbf{Y}(\sigma/2)\leqslant \frac{\gamma}{\sigma^\gamma}
\delta_1^{\frac{sp}{N}}
\big[\mathbf{Y}(\sigma)\big]^{1+\frac{sp}{N}}.
\end{equation*}
Repeating the iteration from Lemma~\ref{lemDG}, with
$\sigma_j:=2^{1-j},\,\, Y_j:=\mathbf{Y}(\sigma_j),  j=1,2,\ldots$, we obtain $Y_j\to0$ as $j\to\infty$, provided that
$$\mathbf{Y}(1)\leqslant \frac{1}{\gamma \delta_1}.$$
Choosing $\delta_1$  and recalling the choice $C=4^{\frac1q}\widetilde C$ made in Lemma~\ref{lemDG}, we complete the proof of the lemma.
\end{proof}

\section{Expansion of Positivity}\label{sec:ExpPos}

\subsection{Propagation of Positivity in Measure}

The following lemma propagates measure theoretical information forward in time.

\begin{lemma}\label{lemPropagation}
Let $q>0$ and $p>1$, and let $u$ be a  local weak super-solution to 
\eqref{eq1.1}-\eqref{eq1.3}, $u \geqslant 0$ in $\Omega_T$. 
Suppose  that
\begin{equation}\label{eqProp2}
\big|B_r(x_0)\cap\big\{u(\cdot, t_0)\geqslant k \big\}\big|\geqslant \alpha |B_r(x_0)|,
\end{equation}
for some $k>0$ and $\alpha\in(0,1)$. Then there exist constants $\tilde{\epsilon}$, $\tilde{\delta} \in (0, 1)$ depending
only on the data and $\alpha$, such that
either
\begin{equation}\label{eqProp3}
\,\operatorname{Tail}(u_{-}, \mathcal{Q}) > \tilde{\delta}^{\frac{1}{q}}\,k,
\end{equation}
or
\begin{equation}\label{eqProp4}
\big|B_r(x_0)\cap\big\{u(\cdot, t))\geqslant \tilde{\epsilon}\,k\big\}\big|\geqslant \frac{1}{4}\alpha |B_r(x_0)|,
\quad\text{for all}\quad  t\in(t_0, t_0+ \tilde{\delta}\,k^{q-p+1}\,r^{sp}), 
\end{equation}
provided that  $B_{4r}(x_0)\times (t_0, t_0+ k^{q-p+1}\,r^{sp})\subset \mathcal{Q}$.

\end{lemma}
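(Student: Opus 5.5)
We will follow the standard DiBenedetto-type argument: apply the energy estimate of Lemma~\ref{lemEnergy} to the truncation $u^-_{k(t)}$ with a time-dependent level that absorbs the tail, and use a cutoff that depends only on space. Concretely, we set
$$k(t):=k-\tilde C^q\,k^{1-q}\int\limits_{t_0}^{t}\,\,\int\limits_{\mathbb{R}^N\setminus B_{4r}(x_0)}\frac{[u_-(y,\tau)]^{p-1}}{|y-x_0|^{N+sp}}\,dy\,d\tau,$$
exactly as in the proof of Lemma~\ref{lemDG}. If \eqref{eqProp3} fails, i.e. $\operatorname{Tail}(u_-,\mathcal Q)\leqslant\tilde\delta^{1/q}k$, then $k(t)\geqslant k/2$ on the relevant interval once $\tilde C^q\tilde\delta\leqslant\frac12$. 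As before, the $k'$ term then cancels the far part of the nonlocal term on the right of \eqref{eqenergy}. We take $\zeta=\zeta_1(x)$ with $\zeta_1=1$ on $B_{(1-\sigma)r}$, $\operatorname{supp}\zeta_1\subset B_r$ and $|\nabla\zeta_1|\leqslant 2/(\sigma r)$, and we integrate over $(t_0,t)$ for $t\leqslant t_0+\tilde\delta k^{q-p+1}r^{sp}$. There is no $\zeta_t$ term.

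This yields, for every such $t$,
$$\int\limits_{B_r\times\{t\}}g(u^-_{k(t)})\zeta_1^p\,dx\leqslant \int\limits_{B_r\times\{t_0\}}g(u^-_{k(t_0)})\zeta_1^p\,dx+\frac{\gamma k^p}{\sigma^{\gamma}r^{sp}}\,(t-t_0)|B_r|.$$
For the first term on the right we have $g(u^-_k)\leqslant\gamma k^{q+1}$, and it vanishes on $\{u(\cdot,t_0)\geqslant k\}$, so it is at most $\gamma k^{q+1}(1-\alpha)|B_r|$. The near nonlocal and cutoff terms are bounded by $\gamma k^p(\sigma r)^{-sp}$ (times $\sigma^{-N}$), as in $\mathcal J_3$ and $\mathcal J_4$. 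With $t-t_0\leqslant\tilde\delta k^{q-p+1}r^{sp}$, the second term becomes $\gamma\sigma^{-\gamma}\tilde\delta k^{q+1}|B_r|$.

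For the left side we use the lower bound from Lemma~\ref{lemDG}: on $\{u(\cdot,t)<\tilde\epsilon k\}$, with $k(t)\in[k/2,k]$ and $\tilde\epsilon\leqslant 1/4$, the function $g$ is bounded below by $\int_0^{k(t)-\tilde\epsilon k}(k(t)-z)^{q-1}z\,dz$. The key point is that the ratio of this to the upper bound $G(k):=\int_0^{k}(k-z)^{q-1}z\,dz$ tends to $1$ in the normalized variable as $\tilde\epsilon\to0$, provided $k(t)$ is also close to $k$. We therefore require the tail smallness to force $k(t)\geqslant(1-\epsilon')k$ rather than merely $k/2$; this is achieved by shrinking $\tilde\delta$. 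Everything scales like $k^{q+1}$, so after dividing by $G(k)$ we obtain
$$\big(1-\omega(\tilde\epsilon,\tilde\delta)\big)\,\big|B_{(1-\sigma)r}\cap\{u(\cdot,t)<\tilde\epsilon k\}\big|\leqslant\Big[(1-\alpha)+\gamma\sigma^{-\gamma}\tilde\delta\Big]|B_r|,$$
where $\omega\to0$ as $\tilde\epsilon,\tilde\delta\to0$. Adding $|B_r\setminus B_{(1-\sigma)r}|\leqslant N\sigma|B_r|$, we choose $\sigma\approx\alpha/(8N)$, then $\tilde\delta$ small in terms of $\sigma$ and $\alpha$, then $\tilde\epsilon$ small. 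This gives $|B_r\cap\{u(\cdot,t)<\tilde\epsilon k\}|\leqslant(1-\alpha/4)|B_r|$, which is \eqref{eqProp4}.

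The main obstacle is this last comparison. Because $g$ is the nonlinear function $\int_0^{u^-}(k-z)^{q-1}z\,dz$ rather than $(u^-)^2/2$, the usual coefficient-$(1-\alpha)$ bookkeeping must be carried out in normalized form, using homogeneity: $g$ equals $k^{q+1}$ times a fixed function of $u/k$ and $k(t)/k$. For $q<1$ we must also check that the integrand singularity at $z=k$ stays integrable, which it does since $q>0$. We also need continuity in the level ratio $k(t)/k\to1$. This is why the tail alternative is phrased with $\tilde\delta^{1/q}$, which ensures that the level shift $\tilde C^q k^{1-q}\operatorname{Tail}^q\leqslant\tilde C^q\tilde\delta k$ can be made an arbitrarily small fraction of $k$.
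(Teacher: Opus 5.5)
Your argument is correct. Its skeleton is the same as the paper's: an energy estimate for $u^-$ with a purely spatial cutoff, comparison of $\int g\,\zeta_1^p\,dx$ at times $t$ and $t_0$, then the choices $\sigma\sim\alpha/N$, $\tilde\delta$, $\tilde\epsilon$. The difference is in the treatment of the tail.

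The paper keeps the constant level $k$ and cancels nothing. Since the tail is normalized with the power $1/q$, the far part of the nonlocal term is bounded directly by $\gamma k\,[\operatorname{Tail}(u_-,\mathcal Q)]^q|B_r|\leqslant\gamma\tilde\delta k^{q+1}|B_r|$. This is the same order as the cutoff and near-tail errors, and it gives \eqref{eqProp5} with the same level at both times. The final comparison is then explicit, because $\int_0^{\epsilon k}z^{q-1}(k-z)\,dz\leqslant \epsilon^q k^{q+1}/q$ yields the ratio $1+\gamma(q)\epsilon^q$.

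Your time-dependent level from Lemma~\ref{lemDG} also works. The cancellation is legitimate, and since $u_-=0$ in $B_R\setminus B_{4r}$ during the time span of $\mathcal Q$, the shift is at most $\tilde C^q\tilde\delta k$. Here, however, it is an unnecessary detour. It is exactly what forces you to make $k(t)/k$ close to $1$ and to run the homogeneity and continuity argument you call the main obstacle; with a fixed level that obstacle disappears. The cancellation device pays off in De Giorgi iterations, where errors must be proportional to the shrinking sets $|Q_\sigma\cap\{u\leqslant k_\sigma\}|$. In a single energy estimate with additive error $\tilde\delta k^{q+1}|B_r|$ it buys nothing.

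One small fix: to bound the near part of the tail by $\gamma\sigma^{-N-sp}k^{p-1}r^{-sp}$ you need a gap between $\operatorname{supp}\zeta_1$ and $\partial B_r$. So take $\zeta_1\in C^1_0(B_{(1-\sigma/2)r}(x_0))$ as the paper does, rather than merely $\operatorname{supp}\zeta_1\subset B_r$.
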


\begin{proof}
Set $I:= (t_0, t_0+ \tilde{\delta}\,k^{q-p+1}\,r^{sp})$. Use the energy estimate \eqref{eqenergy} for the function $u^-_k$  in the cylinder $B_{r}(x_0)\times I$,  with
 the cutoff function $\zeta(x)\in C^1_0(B_{r(1-\sigma/2)}(x_0))$ such that 
 \[
0\leqslant\zeta\leqslant1,
\qquad
\zeta\equiv1
\quad\text{in }B_{r(1-\sigma)}(x_0),
\qquad
|\nabla\zeta|\leqslant\frac{2}{\sigma r}.
\]
  Estimating the diffusion term in the standard way,  we obtain
\begin{multline*}
\sup\limits_{t\in I}\int\limits_{B_{r(1-\sigma)}(x_0)\times\{t\}}\int\limits_u^{k}z^{q-1}(k-z)\,dzdx\leqslant 
\int\limits_{B_{r}(x_0)\times\{t_0\}}\int\limits_u^{k}z^{q-1}(k-z)\,dzdx +\frac{\gamma\,\tilde{\delta}}{\sigma^p}\,k^{q+1} |B_r(x_0)|
\\+\gamma \int\limits_I\int\limits_{B_{r(1-\sigma/2)}(x_0)}(u(x,t)-k)_-\bigg(\sup\limits_{x\in B_{r(1-\sigma/2)}(x_0)}\int\limits_{\mathbb{R}^N\setminus B_r(x_0)}\frac{\big[(u(y, t)-k)_-\big]^{p-1}}{|x-y|^{N+sp}} dy\bigg) dx dt.
\end{multline*}
Assuming that \eqref{eqProp3} does not hold, we estimate the last term as follows
\begin{equation}
\begin{aligned}
\int\limits_I\,\,\int\limits_{B_{r(1-\sigma/2)}(x_0)}&(u(x,t)-k)_-\bigg(\sup\limits_{x\in B_{r(1-\sigma/2)}(x_0)}\int\limits_{\mathbb{R}^N\setminus B_r(x_0)}\frac{\big[(u(y, t)-k)_-\big]^{p-1}}{|x-y|^{N+sp}} dy\bigg) dx dt
\\&\leqslant \frac{\gamma}{\sigma^{N+sp}}\int\limits_I\int\limits_{B_{r}(x_0)}(u(x,t)-k)_-\bigg(\int\limits_{B_R(x_0)\setminus B_r(x_0)}\frac{\big[(u(y, t)-k)_-\big]^{p-1}}{|y-x_0|^{N+sp}} dy\bigg) dx dt\\&+
\gamma\int\limits_I\int\limits_{B_{r}(x_0)}(u(x,t)-k)_-\bigg(\int\limits_{\mathbb{R}^N\setminus B_R(x_0)}\frac{\big[(u(y, t)-k)_-\big]^{p-1}}{|y-x_0|^{N+sp}} dy\bigg) dx dt
\\&\leqslant \frac{\gamma\,\tilde{\delta}}{\sigma^{N+sp}}\,k^{q+1} |B_r(x_0)|+\gamma k \big[\operatorname{Tail} (u_-, \mathcal{Q})\big]^q|B_r(x_0)|\leqslant\frac{\gamma\,\tilde{\delta}}{\sigma^{N+sp}}\,k^{q+1} |B_r(x_0)|.
\end{aligned}
\end{equation}
Combining the two preceding inequalities, we derive
\begin{equation}\label{eqProp5}
\sup\limits_{t\in I}\int\limits_{B_{r(1-\sigma)}(x_0)\times\{t\}}\int\limits_u^{k}z^{q-1}(k-z)\,dzdx\leqslant 
\int\limits_{B_{r}(x_0)\times\{t_0\}}\int\limits_u^{k}z^{q-1}(k-z)\,dzdx +\frac{\gamma\,\tilde{\delta}}{\sigma^{N+p}}\,k^{q+1} |B_r(x_0)|.
\end{equation}
To complete the proof, it remains to estimate the integrals on the left- and right-hand sides of \eqref{eqProp5}. Set $k_\epsilon:=\epsilon k$. Then \eqref{eqProp5} and \eqref{eqProp2} yield, for every $t\in I$,
\begin{multline*}
\big|B_{r(1-\sigma)}(x_0)\cap\big\{u(\cdot, t)\leqslant k_\epsilon\big\}\big|\leqslant \frac{\int\limits^k_0 z^{q-1}(k-z) dz}{\int\limits^k_{k_\epsilon} z^{q-1}(k-z) dz}(1-\alpha) |B_r(x_0)|+\frac{\gamma\,\sigma^{-N-p} \tilde{\delta}}{\int\limits^k_{k_\epsilon}z^{q-1}(k-z) dz}\,k^{q+1} |B_r(x_0)|.
\end{multline*}
Moreover,  on the set $B_{r(1-\sigma)}(x_0)\cap\{u(\cdot, t)\leqslant k_\epsilon\}$, we have
$$\int\limits_u^{k}z^{q-1}(k-z)\,dz\geqslant \int\limits_{k_\epsilon}^{k}z^{q-1}(k-z)\,dz\geqslant \frac{k^{q+1}}{q(q+1)}(1-(q+1)\epsilon^q),$$
and
\begin{equation*}
\begin{aligned}
\int\limits_u^{k}z^{q-1}(k-z)\,dz\leqslant \int\limits^{k}_{0}z^{q-1}(k-z)\,dz&=\int\limits_{k_\epsilon}^{k}z^{q-1}(k-z)\,dz+
\int\limits_{0}^{k_\epsilon}z^{q-1}(k-z)\,dz\\&\leqslant \int\limits_{k_\epsilon}^{k}z^{q-1}(k-z)\,dz+\epsilon^q\frac{k^{q+1}}{q}.
\end{aligned}
\end{equation*}
Hence
\begin{equation*}
\begin{aligned}
\big|B_{r(1-\sigma)}(x_0)\cap\big\{u(\cdot, t)\leqslant k_\epsilon\big\}\big|&\leqslant \Bigg[1+\frac{\int\limits_{0}^{k_\epsilon}z^{q-1}(k-z)\,dz}{\int\limits_{k_\epsilon}^{k}z^{q-1}(k-z)\,dz}\Bigg](1-\alpha)|B_r(x_0)|+
\frac{\gamma\,\tilde{\delta}}{\sigma^{N+p}} |B_r(x_0)|\\&\leqslant
\Big[(1+\gamma(q)\epsilon^q)(1-\alpha)+\frac{\,\tilde{\delta}}{\sigma^{N+p}}\Big] |B_r(x_0)|,
\end{aligned}
\end{equation*}
from which we conclude
\begin{equation*}
\big|B_{r}(x_0)\cap\big\{u(\cdot, t)\leqslant k_\epsilon\big\}\big|\leqslant \Big[N\sigma+ (1+\gamma(q)\epsilon^q)(1-\alpha)+\frac{\gamma\,\tilde{\delta}}{\sigma^{N+p}}\Big]|B_r(x_0)|.
\end{equation*}
Choose $\sigma$, $\epsilon$ small enough such that  $N\sigma=\frac{1}{8}\alpha$, $(1+\gamma(q)\epsilon^q)(1-\alpha)=1-\frac{\alpha}{2}$ and then fixing $\tilde{\delta}$ by $\frac{\gamma\,\tilde{\delta}}{\sigma^{N+p}}=\frac{1}{8}\alpha$, we arrive at the required \eqref{eqProp4}, which completes the proof of the lemma.
\end{proof}

\subsection{Two Versions of the Measure Shrinking Lemma}
The following measure shrinking lemmas can  be readily derived.  The argument runs  like in \cite[Lemma 3.4]{Lia2024} and \cite[Lemma 3.6, Lemma 3.7]{Lia2026arxiv}.
\begin{lemma}\label{lemShrinking}
Let $u$ be a  local weak super-solution to \eqref{eq1.1}-\eqref{eq1.3}, $u \geqslant 0$ in $\Omega_T$,   with $q>0$ and $p>1$.  Suppose that, for some $\delta\in(0,1)$,
$k>0$, and $\alpha\in(0,1)$,
\begin{equation}\label{eqShr2}
\big|B_{\frac{3}{4} r}(x_0)\cap\big\{u(\cdot, t))\geqslant k\big\}\big|\geqslant \alpha |B_{\frac{3}{4} r}(x_0)|,
\end{equation}
for all $t\in (t_0-\delta k^{q-p+1} r^{sp}, t_0)$. Then there exists $\gamma>0$, depending only on the data, such that
for every $\epsilon\in(0,\frac12)$, either  
\begin{equation}\label{eqShr3}
\operatorname{Tail}(u_{-}, \mathcal{Q}) > \epsilon \,k,
\end{equation}
or there exists a time level $\tilde{t}\in (t_0-\delta k^{q-p+1} r^{sp}, t_0)$ such that
\begin{equation}\label{eqShr4}
\big|B_{\frac{3}{4} r}(x_0)\cap\big\{u(\cdot, \tilde{t})\leqslant \frac{1}{2}\epsilon k\big\}\big|\leqslant \frac{\gamma\,\epsilon^{q}}{\delta\,\alpha}|B_{\frac{3}{4} r}(x_0)|,
\end{equation}
provided that $Q^-_{4 r, \eta}(x_0, t_0)\subset \mathcal{Q}$, $\eta:=k^{q-p+1} r^{sp}$.
\end{lemma}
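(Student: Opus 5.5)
We follow the De Giorgi-type measure shrinking argument of \cite[Lemma 3.4]{Lia2024}. We test the energy estimate against the truncations $(u-k_j)_-$ at the fixed levels $k_j:=2^{-j}\epsilon k$ for $j\geqslant 0$, or more simply the single level $\epsilon k$. Because the conclusion \eqref{eqShr4} has the form $\epsilon^q/(\delta\alpha)$ rather than the form produced by a dyadic iteration, we use the one-level version: a fractional isoperimetric (De Giorgi) inequality at each time, integrated in time.

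First, we apply Lemma \ref{lemEnergy} to $u^-_{\epsilon k}$ with constant level $k(t)\equiv\epsilon k$, so the $k'$-term vanishes. The cutoff is $\zeta(x,t)=\zeta_1(x)\zeta_2(t)$, with $\zeta_1\equiv1$ on $B_{\frac34 r}$ and supported in $B_r$, and $\zeta_2$ vanishing at $t_0-2\delta\eta$. This requires extending the time interval slightly, or equivalently working on $(t_0-\delta\eta,t_0)$ with a cutoff rising on its first half and shrinking the interval by a factor. The diffusion term on the left is kept through its ``good'' part
\[
\int_{t_0-\delta\eta}^{t_0}\int_{B_{\frac34 r}}u^-_{\epsilon k}(x,t)\int_{B_{\frac34 r}}\frac{[u^+_{\epsilon k}(y,t)]^{p-1}}{|x-y|^{N+sp}}\,dy\,dx\,dt ,
\]
and since $u\leqslant\frac12\epsilon k$ on the relevant set this is bounded below by
\[
\frac{\gamma^{-1}}{r^{N+sp}}\int\frac{\epsilon k}{2}\,\Big|\big\{u(\cdot,t)\leqslant\tfrac12\epsilon k\big\}\cap B_{\frac34 r}\Big|\;\Big(\tfrac12 k\Big)^{p-1}\,\Big|\big\{u(\cdot,t)\geqslant k\big\}\cap B_{\frac34 r}\Big|\,dt ,
\]
where we used $u^+_{\epsilon k}\geqslant(1-\epsilon)k\geqslant k/2$ on $\{u\geqslant k\}$ and $|x-y|\leqslant 2r$. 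By \eqref{eqShr2}, the last factor is at least $\alpha|B_{\frac34 r}|$. This nonlocal ``positivity feedback'' term replaces the De Giorgi isoperimetric inequality of the local case, and it is the key lower bound.

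The right-hand side of the energy estimate is bounded as follows. The term $g(u^-_{\epsilon k})\,|\zeta_t|$ is at most $\gamma(\epsilon k)^{q+1}|B_r|/(\delta\eta)$, because $g(u^-_{\epsilon k})\leqslant\int_0^{\epsilon k}z^{q-1}(\epsilon k-z)\,dz\leqslant\gamma(\epsilon k)^{q+1}$. The cutoff term is at most $\gamma(\epsilon k)^p r^{-sp}\delta\eta\,|B_r|$. The tail term is at most $\gamma\,\epsilon k\,|B_r|\big([\operatorname{Tail}(u_-,\mathcal Q)]^q+\delta\eta\,(\epsilon k)^{p-1}r^{-sp}\big)$, with the near part split off as in the proof of Lemma \ref{lemPropagation}. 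Assuming \eqref{eqShr3} fails, we have $[\operatorname{Tail}]^q\leqslant(\epsilon k)^q$. Using $\eta=k^{q-p+1}r^{sp}$, every right-hand term is then at most $\gamma\,\epsilon^{\min(q,p-1)+1}k^{q+1}|B_r|$ up to factors of $\delta$. Since $q<p-1$ is not assumed here, the dominant term is the $\epsilon^{q+1}k^{q+1}$ one from the time derivative and the tail. Dividing by the lower bound coefficient $\epsilon k\,k^{p-1}\alpha\,r^{-sp}$ gives
\[
\int_{t_0-\delta\eta}^{t_0}\Big|B_{\frac34 r}\cap\big\{u(\cdot,t)\leqslant\tfrac12\epsilon k\big\}\Big|\,dt\leqslant\frac{\gamma\,\epsilon^{q}}{\alpha}\,\eta\,|B_{\frac34 r}| .
\]
A mean-value choice of $\tilde t$ in $(t_0-\delta\eta,t_0)$ then yields \eqref{eqShr4}, with the $1/\delta$ coming from the length of the interval.

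The main obstacle is bookkeeping the powers of $\epsilon$, especially the tail and cutoff contributions when $p-1<q$. There the $(\epsilon k)^p$ cutoff term gives $\epsilon^{p-1}$ after division rather than $\epsilon^q$. We would try to absorb this either by taking $\zeta_1$ so that the cutoff term is bounded using $\delta\leqslant1$ and noting $\epsilon^{p-1}\leqslant\epsilon^q$ when $q\leqslant p-1$, or otherwise by accepting $\epsilon^{\min(q,p-1)}$. In the paper's range $q<p-1$ only the first route is needed.
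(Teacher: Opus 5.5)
Your proposal is correct and follows essentially the same route as the paper. Both arguments apply the energy estimate to $(u-\epsilon k)_-$, bound the nonlocal good term from below on $B_{\frac34 r}$ via \eqref{eqShr2}, bound the right-hand side by $\gamma(\epsilon k)^{q+1}|B_r|+\gamma(\epsilon k)^p r^{-sp}|Q^-_{r,\delta\eta}|$ when \eqref{eqShr3} fails, and pick $\tilde t$ by a mean-value argument. The only difference is that the paper uses a time-independent cutoff and keeps the initial-time energy term (also $\leqslant\gamma(\epsilon k)^{q+1}|B_r|$), which avoids your rising time cutoff. Your caveat is well taken: the paper's absorption of the $(\epsilon k)^p$ cutoff and near-tail term into $\gamma\epsilon^{q+1}k^p/(\delta r^{sp})$ silently needs $q\leqslant p-1$ too, and $q<p-1$ is the only range in which the lemma is later applied.
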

\begin{proof}
We apply the energy estimate \eqref{eqenergy} to the function
$u^-_{k_\epsilon}$, where $k_\epsilon=\epsilon k$.
Let $\zeta\in C^1_0(B_{\frac78 r}(x_0))$ be a cutoff function satisfying
\[
0\leqslant\zeta\leqslant1,
\qquad
\zeta\equiv1
\quad\text{in }B_{\frac34 r}(x_0),
\qquad
|\nabla\zeta|\leqslant\frac{8}{r}.
\]
We obtain 
\begin{equation*}
\begin{aligned}
\int\limits^{t_0}_{t_0-\delta \eta}\,\,\int\limits_{B_{\frac{3}{4} r}(x_0)}u^-_{k_\epsilon}(x, \tau)&\,\bigg(\int\limits_{B_{\frac{3}{4}r}(x_0)}\frac{\big[u^+_{k_\epsilon}(y, \tau)\big]^{p-1}}{|x-y|^{N+sp}}dy\bigg)\,dxd\tau\\&\leqslant \gamma \int\limits_{B_{r}(x_0)\times\{t_0-\delta \eta\}}\,\,\int\limits_u^{k_\epsilon}\,z^{q-1}(k_\epsilon-z) dz  dx\\&+
\gamma\int\limits^{t_0}_{t_0-\delta \eta}\,\,\int\limits_{B_r(x_0)}\int\limits_{B_r(x_0)}\max[u^-_{k_\epsilon}(x, \tau), u^-_{k_\epsilon}(y, \tau)]^p\frac{|\zeta(x)-\zeta(y)|^p}{|x-y|^{N+sp}}\,dxdyd\tau\\&+
\int\limits^{t_0}_{t_0-\delta \eta}\,\,\int\limits_{B_{\frac{7}{8}r}(x_0)}u^-_{k_\epsilon}(x, \tau)\bigg(\sup\limits_{x\in B_{\frac{7}{8} r}(x_0)}\int\limits_{\mathbb{R}^N\setminus B_r(x_0)}\frac{\big[ u^-_{k_\epsilon}(y, \tau)\big]^{p-1}}{|x-y|^{N+sp}} dy\bigg) dxd\tau.
\end{aligned}
\end{equation*}
Estimating the integrals on the right-hand side similarly to that of Lemma \ref{lemPropagation}, we get
\begin{multline*}
\int\limits^{t_0}_{t_0-\delta \eta}\,\,\int\limits_{B_{\frac{3}{4} r}(x_0)}u^-_{k_\epsilon}(x, \tau)\,\bigg(\int\limits_{B_{\frac{3}{4}r}(x_0)}\frac{\big[u^+_{k_\epsilon}(y, \tau)\big]^{p-1}}{|x-y|^{N+sp}}dy\bigg)\,dxd\tau\leqslant \gamma (\epsilon k)^{q+1}|B_r(x_0)|\\+ \frac{\gamma (\epsilon k)^p}{r^{sp}} |Q^-_{r, \delta\eta}(x_0, t_0)|\leqslant \frac{\gamma \epsilon^{q+1} k^p}{\delta r^{sp}}|Q^-_{r, \delta\eta}(x_0, t_0)|,
\end{multline*}
provided that \eqref{eqShr3} does not hold.
To estimate the left-hand side from below, observe that, for $x\in Q^-_{\frac{3}{4} r, \delta \eta}(x_0, t_0)\cap\big\{u\leqslant \frac{1}{2} \epsilon\,k\}$, we have
\begin{equation*}
u^-_{k_\epsilon}(x, \tau)\geqslant \frac{1}{2} \epsilon\,k.
\end{equation*}
On the other hand, for $y\in Q^-_{\frac{3}{4} r, \delta \eta}(x_0, t_0)\cap\big\{u\geqslant \,k\},$ it follows that
\begin{equation*}
u^+_{k_\epsilon}(x, \tau)\geqslant (1- \epsilon)k \geqslant \frac{1}{2} k,
\end{equation*}
provided that $\epsilon\in(0, \frac{1}{2}).$ Moreover, since $x,y\in B_{\frac{3}{4}r}(x_0)$, we have $|x-y|\leqslant\frac{3}{4}r$. Hence, by \eqref{eqShr2},
\begin{equation*}
\alpha\,\epsilon k^p\big|Q^-_{\frac{3}{4} r, \delta \eta}(x_0, t_0)\cap\big\{u\leqslant \frac{1}{2} \epsilon\,k\big\}\big|\leqslant \frac{\gamma}{\delta}\epsilon^{q+1} k^p |Q^-_{\frac{3}{4} r, \delta\eta}(x_0, t_0)|,
\end{equation*}
which yields
$$
\big|Q^-_{\frac{3}{4} r, \delta \eta}(x_0, t_0)\cap\big\{u\leqslant \frac{1}{2} \epsilon\,k\big\}\big|\leqslant \frac{\gamma \epsilon^q}{\delta \alpha} |Q^-_{\frac{3}{4} r, \delta\eta}(x_0, t_0)|.
$$
From which the required \eqref{eqShr4} follows. This completes the proof of the lemma.
\end{proof}

\begin{lemma}\label{lemShrinking1}
Let $u$ be a  local weak super-solution to \eqref{eq1.1}-\eqref{eq1.3},   with $u\geqslant0$ in $\Omega_T$, $q>0$, and $p>1$. Fix $k>0$ and $\delta\in(0,1)$.
 For every $\epsilon \in (0, \frac{1}{2})$ there exists $\gamma >0$, depending only on the data, such that either
\begin{equation}\label{eqShr6}
\operatorname{Tail}(u_{-}, \mathcal{Q}) > \epsilon\,k,
\end{equation}
or there exists a time level $\tilde{t}\in (t_0-\delta k^{q-p+1} r^{sp}, t_0)$ such that
\begin{equation}\label{eqShr7}
\big|B_{\frac{3}{4} r}(x_0)\cap\big\{u(\cdot, \tilde{t})\leqslant \frac{1}{2} \epsilon k\big\}\big|\leqslant \frac{\gamma (\epsilon k)^q}{\int\limits^{t_0}_{t_0-\delta \eta}\int\limits_{\mathbb{R}^N}\frac{[u_+(y, \tau)]^{p-1}}{(r+|x_0-y|)^{N+sp}}dy d\tau}|B_r(x_0)|,
\end{equation}
provided that $Q^-_{4 r, \eta}(x_0, t_0)\subset \mathcal{Q}$, $\eta:=k^{q-p+1} r^{sp}$.
\end{lemma}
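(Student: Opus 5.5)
The proof will follow that of Lemma \ref{lemShrinking} almost verbatim. The only change is that the lower bound for the left-hand side is now driven by the full weighted integral of $u_+$, rather than by the measure hypothesis \eqref{eqShr2}. Set $k_\epsilon=\epsilon k$, $\eta=k^{q-p+1}r^{sp}$, and take the same cutoff $\zeta\in C^1_0(B_{\frac78 r}(x_0))$ with $\zeta\equiv1$ on $B_{\frac34 r}(x_0)$.

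\textbf{Upper bound.} Apply the energy estimate \eqref{eqenergy} to $u^-_{k_\epsilon}$ on $B_r(x_0)\times(t_0-\delta\eta,t_0)$. This time keep on the left-hand side the whole nonlocal "good" term
\[
\int\limits_{t_0-\delta\eta}^{t_0}\int\limits_{B_{\frac34 r}(x_0)}u^-_{k_\epsilon}(x,\tau)\bigg(\int\limits_{\mathbb{R}^N}\frac{[u^+_{k_\epsilon}(y,\tau)]^{p-1}}{|x-y|^{N+sp}}dy\bigg)dxd\tau ,
\]
with the $y$-integral over all of $\mathbb{R}^N$. The right-hand side is estimated exactly as in Lemma \ref{lemShrinking}, assuming \eqref{eqShr6} fails:
\begin{itemize}
\item the initial-time term is at most $\gamma(\epsilon k)^{q+1}|B_r|$;
\item the cutoff term is at most $\gamma(\epsilon k)^p r^{-sp}\delta\eta|B_r|\le\gamma(\epsilon k)^{q+1}|B_r|$, using $\delta<1$ and $\epsilon<1$ (so $\epsilon^{p}\le\epsilon^{q+1}$ up to adjusting, or simply since $p>q+1$);
\item the tail term is at most $\gamma(\epsilon k)^{q+1}|B_r|$, via $u^-_{k_\epsilon}\le\epsilon k$ and $\mathrm{Tail}(u_-,\mathcal Q)\le\epsilon k$.
\end{itemize}
Altogether the right-hand side is bounded by $\gamma(\epsilon k)^{q+1}|B_r(x_0)|$.

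\textbf{Lower bound.} On the set $\{u\le\frac12\epsilon k\}$ we have $u^-_{k_\epsilon}\ge\frac12\epsilon k$. For $x\in B_{\frac34 r}(x_0)$ and any $y\in\mathbb{R}^N$ we have $|x-y|\le r+|x_0-y|$, so $|x-y|^{-N-sp}\ge(r+|x_0-y|)^{-N-sp}$.

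The obstacle is the level shift: we need $u^+_{k_\epsilon}$ to dominate $u_+$. Since $(u-\epsilon k)_+^{p-1}\ge 2^{1-p}u^{p-1}-(\epsilon k)^{p-1}$ (with $2^{2-p}$ in place of $2^{1-p}$ if $p<2$), we get
\[
\int\!\!\int\frac{[u^+_{k_\epsilon}]^{p-1}}{(r+|x_0-y|)^{N+sp}}\,dy\,d\tau\;\ge\;\frac1\gamma\int\!\!\int\frac{[u_+]^{p-1}}{(r+|x_0-y|)^{N+sp}}\,dy\,d\tau-\gamma(\epsilon k)^{p-1}r^{-sp}\delta\eta .
\]
The subtracted term is at most $\gamma\epsilon^{p-1}k^q$. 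Write $\mathcal{T}$ for the weighted integral of $u_+$ over $(t_0-\delta\eta,t_0)$ appearing in \eqref{eqShr7}.
\begin{itemize}
\item If $\mathcal{T}\le 2\gamma^2 k^q$ (or any comparable multiple of $k^q$), then \eqref{eqShr7} holds trivially. Indeed, its right-hand side is then $\ge c\,\epsilon^q|B_r|$, and by enlarging $\gamma$ this exceeds $|B_{\frac34 r}|$ (note $\epsilon<\frac12$, and the bound is only required up to the constant $\gamma$). If this trivial comparison fails for small $\epsilon$, I would instead absorb the error via $\epsilon^{p-1}\le\epsilon^q$, which holds since $p-1>q$.
\item Otherwise the error term is absorbed, and the inner integral is $\ge\mathcal{T}/\gamma$ uniformly in $x\in B_{\frac34 r}$.
\end{itemize}

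\textbf{Conclusion.} In the nontrivial case we get
\[
\tfrac12\epsilon k\,\frac{\mathcal T}{\gamma}\,\Big|Q^-_{\frac34 r,\delta\eta}\cap\{u\le\tfrac12\epsilon k\}\Big|\big/(\delta\eta)\;\le\;\gamma(\epsilon k)^{q+1}|B_r| ,
\]
after averaging in time. More carefully, the left-hand side bounds the time integral of the measures from below, and the smallest time slice is at most the average. This yields a $\tilde t$ with $|B_{\frac34 r}\cap\{u(\cdot,\tilde t)\le\frac12\epsilon k\}|\le\gamma(\epsilon k)^q\,\delta\eta\,\mathcal T^{-1}\cdots$.

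I expect the main difficulty to be precisely this bookkeeping of the time normalization. The factor $\delta\eta$ versus the time-integrated $\mathcal T$ must come out so that the bound matches \eqref{eqShr7} as stated. I would fix it by not averaging the weighted integral: instead, choose $\tilde t$ minimizing the slice measure, so that the left-hand side is $\ge\frac{\epsilon k}{2\gamma}\,|\{u(\cdot,\tilde t)\le\frac12\epsilon k\}\cap B_{\frac34 r}|\,\mathcal T$. This gives \eqref{eqShr7} directly.
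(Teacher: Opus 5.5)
Your argument is the paper's own: the energy estimate for $(u-\epsilon k)_-$ keeping the full nonlocal good term on the left, the right-hand side bounded by $\gamma(\epsilon k)^{q+1}|B_r(x_0)|$, the lower bound via $|x-y|\leqslant r+|x_0-y|$ with the infimum in time of the slice measure pulled out of the time integral, and the level-shift error $\gamma\delta\epsilon^{p-1}k^q\leqslant\gamma(\epsilon k)^q$. Of your hedges, only the fallback versions work. The trivial case must be $\mathcal T\lesssim(\epsilon k)^q$: a threshold $\mathcal T\lesssim k^q$ would force $\gamma\sim\epsilon^{-q}$, which is useless in Proposition~\ref{pr1.1}, where $\gamma\epsilon^q\leqslant\nu_1$ is needed. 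The time step must use the infimum over $\tau$ rather than averaging, because both the slice measure and the weighted integral vary with $\tau$.
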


\begin{proof}
We use \eqref{eqenergy} with $k_\epsilon =\epsilon k$ and choose the cutoff function as in the previous lemma. Estimating the terms on the right-hand side in the same way as in the proof of Lemma~\ref{lemShrinking}, and assuming that \eqref{eqShr6} does not hold, we obtain
\begin{equation*}
\begin{aligned}
\int\limits^{t_0}_{t_0-\delta \eta}\,\,&\int\limits_{B_{\frac{3}{4} r}(x_0)}u^-_{k_\epsilon}(x, \tau)\,\bigg(\int\limits_{\mathbb{R}^N}\frac{\big[u^+_{k_\epsilon}(y, \tau)\big]^{p-1}}{|x-y|^{N+sp}}dy\bigg)\,dxd\tau\leqslant \gamma \int\limits_{B_{r}(x_0)\times\{t_0-\delta \eta\}}\,\,\int\limits_u^{k_\epsilon}\,z^{q-1}(k_\epsilon-z) dz  dx\\&+
\gamma\int\limits^{t_0}_{t_0-\delta \eta}\,\,\int\limits_{B_r(x_0)}\int\limits_{B_r(x_0)}\max[u^-_{k_\epsilon}(x, \tau), u^-_{k_\epsilon}(y, \tau)]^p\frac{|\zeta(x)-\zeta(y)|^p}{|x-y|^{N+sp}}\,dxdyd\tau\\&+
\int\limits^{t_0}_{t_0-\delta \eta}\,\,\int\limits_{B_{\frac{7}{8}r}(x_0)}u^-_{k_\epsilon}(x, \tau)\bigg(\sup\limits_{x\in B_{\frac{7}{8} r}(x_0)}\int\limits_{\mathbb{R}^N\setminus B_r(x_0)}\frac{\big[ u^-_{k_\epsilon}(y, \tau)\big]^{p-1}}{|x-y|^{N+sp}} dy\bigg) dxd\tau\leqslant
 \gamma (\epsilon k)^{q+1}|B_r(x_0)|\\&+ \frac{\gamma (\epsilon k)^p}{r^{sp}} |Q^-_{r, \eta}(x_0, t_0)|\leqslant \gamma (\epsilon k)^{q+1}|B_r(x_0)|,
 \end{aligned}
\end{equation*}
To obtain a lower bound for the left-hand side of the inequality above, observe that on the set $B_{\frac34r}(x_0)
\cap\{u(\cdot,\tau)\leqslant\frac12\epsilon k\},$ we have $u^-_{k\epsilon}(x,\tau)\geqslant\frac{1}{2}\epsilon k.$ Furthermore, for $x\in B_{\frac34r}(x_0)$ and $y\in\mathbb R^N$,
$$
|x-y|
\leqslant
|x-x_0|+|x_0-y|
\leqslant
r+|x_0-y|.
$$
Therefore,
\begin{multline*}
\int\limits^{t_0}_{t_0-\delta \eta}\,\,\int\limits_{B_{\frac{3}{4} r}(x_0)}u^-_{k_\epsilon}(x, \tau)\,\bigg(\int\limits_{\mathbb{R}^N}\frac{\big[u^+_{k_\epsilon}(y, \tau)\big]^{p-1}}{|x-y|^{N+sp}}dy\bigg)\,dxd\tau\\\geqslant
\gamma \epsilon k \,\inf\limits_{t\in (t_0-\delta \eta, t_0)} \big|B_{\frac{3}{4} r}(x_0)\cap\big\{u(\cdot, t) \leqslant \frac{1}{2}\epsilon k\big\}\big|\int\limits^{t_0}_{t_0-\delta \eta}\,\,\int\limits_{\mathbb{R}^N}\frac{\big[u^+_{k_\epsilon}(y, \tau)\big]^{p-1}}{\big(r+|x_0-y|\big)^{N+sp}}dy d\tau.
\end{multline*}
A direct calculation gives
\begin{equation*}
\int\limits^{t_0}_{t_0-\delta \eta}\int\limits_{\mathbb{R}^N}\frac{\big[u^+_{k_\epsilon}(y, \tau)\big]^{p-1}}{\big(r+|x_0-y|\big)^{N+sp}}dy d\tau\geqslant  \gamma \int\limits^{t_0}_{t_0-\delta \eta}\int\limits_{\mathbb{R}^N}\frac{\big[u_+(y, \tau)\big]^{p-1}}{\big(r+|x_0-y|\big)^{N+sp}}dy d\tau -\gamma (\epsilon k)^q.
\end{equation*}
Combining the preceding estimates, we arrive at
\begin{equation}\label{eqShr8}
\inf\limits_{t\in (t_0-\delta \eta, t_0)} \big|B_{\frac{3}{4} r}(x_0)\cap\big\{u(\cdot, t)\leqslant\frac{1}{2}\epsilon k\big\}\big| \leqslant\frac{\gamma (\epsilon k)^q}{\int\limits^{t_0}_{t_0-\delta \eta}\int\limits_{\mathbb{R}^N}\frac{[u_+(y, \tau)]^{p-1}}{(r+|x_0-y|)^{N+sp}}dy d\tau}|B_r(x_0)|,
\end{equation}
which completes the proof of the lemma.
\end{proof}

\subsection{Expansion of Positivity}
The following proposition will be used in the sequel (for the case $q=1$ and $p >2$ we refer the reader to \cite{Lia2026arxiv}).

\begin{proposition}\label{prExpDoblyDeg}
Let $u$ be a  local weak super-solution to \eqref{eq1.1}-\eqref{eq1.3}, $u \geqslant 0$ in $\Omega_T$ and let $0<q< p-1$. 
Assume that
\begin{equation}\label{eqExpDeg1}
|B_{\frac{3}{4} r}(x_0)\cap\big\{u(\cdot, t_0)\geqslant k\big\}|\geqslant \alpha|B_{\frac{3}{4} r}(x_0)|,
\end{equation}
 with some  $k> 0$ and $\alpha \in (0, 1)$. Then there exist numbers  $\delta_1\in (0, 1)$ depending only on the data and $\epsilon_*$, $\xi_*\in (0, 1)$, depending additionally on  $\alpha$,  such that,  if
\begin{equation}\label{eqExpDeg2}
4\,\operatorname{Tail}(u_{-}, \mathcal{Q}) \leqslant\,\epsilon_*\,k,
\end{equation}
then
\begin{equation}\label{eqExpDeg3}
u(x, t)\geqslant \frac{1}{4}\xi_*\,k,\quad  (x, t)\in B_{\frac{r}{2}}(x_0)\times (t_0+\frac{3}{4}\delta_1 (\xi_*\,k)^{q-p+1} r^{sp}, t_0+ \delta_1 (\xi_*\,k)^{q-p+1} r^{sp}),
\end{equation}
provided that $B_{4r}(x_0)\times (t_0, t_0+\delta_1 (\xi_*\,k)^{q-p+1} r^{sp})\subset \mathcal{Q}$.
\end{proposition}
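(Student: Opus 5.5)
The proof will follow the now-standard expansion of positivity scheme (DiBenedetto–Gianazza–Vespri, adapted to the nonlocal setting as in \cite{Lia2024, Lia2026arxiv}). It chains Lemma~\ref{lemPropagation}, Lemma~\ref{lemShrinking} and Lemma~\ref{lemDGmeasuredata}. The degenerate character $q<p-1$ enters only through the intrinsic time scale $\theta(\ell)=\ell^{q-p+1}r^{sp}$, which \emph{grows} as the level $\ell$ decreases. Throughout, $\epsilon_*$ is taken smaller than all the tail thresholds appearing in these lemmas. Hence each time a lemma offers the alternative ``Tail large'', it is excluded by \eqref{eqExpDeg2}: the tails at the lower levels $\epsilon k$, $\tilde\epsilon k$, $\xi_* k$ are still dominated, since we choose $\epsilon_*\leqslant \tilde\delta^{1/q}\xi_*$, etc.

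\emph{Step 1 (propagation).} From \eqref{eqExpDeg1}, Lemma~\ref{lemPropagation} (applied on $B_{\frac34 r}$ with $\alpha$) gives $\tilde\epsilon,\tilde\delta$ depending on $\alpha$ such that
\[
|B_{\frac34 r}(x_0)\cap\{u(\cdot,t)\geqslant \tilde\epsilon k\}|\geqslant \tfrac14\alpha|B_{\frac34 r}(x_0)|
\qquad\text{for all } t\in(t_0,t_0+\tilde\delta k^{q-p+1}r^{sp}).
\]
Set $k_1:=\tilde\epsilon k$. Since $q-p+1<0$, the time $\tilde\delta k^{q-p+1}r^{sp}$ is comparable to $\delta k_1^{q-p+1}r^{sp}$ with $\delta:=\tilde\delta\tilde\epsilon^{p-1-q}$, a constant depending only on the data and $\alpha$. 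This puts us in the setting of Lemma~\ref{lemShrinking}.

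\emph{Step 2 (shrinking).} Apply Lemma~\ref{lemShrinking} at level $k_1$, with $\frac14\alpha$ and $\delta$, on a window $(s-\delta k_1^{q-p+1}r^{sp},s)$ inside the interval of Step 1. For $\epsilon$ to be chosen, it yields a time $\tilde t$ with
\[
|B_{\frac34 r}\cap\{u(\cdot,\tilde t)\leqslant \tfrac12\epsilon k_1\}|\leqslant \frac{\gamma\epsilon^q}{\delta\alpha}|B_{\frac34 r}|.
\]
Choose $\epsilon$ so that $\gamma\epsilon^q/(\delta\alpha)\leqslant \nu_1$, where $\nu_1$ is from Lemma~\ref{lemDGmeasuredata}, and set $\xi_*:=\frac12\epsilon\tilde\epsilon$. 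Then $\xi_* k$ is the new level. Now apply Lemma~\ref{lemDGmeasuredata} at time $\tilde t$, level $\xi_* k$, radius $\frac34 r$. It gives $u\geqslant\frac14\xi_* k$ in $B_{\frac38 r}\times(\tilde t+\frac12\delta_1\theta_*,\tilde t+\delta_1\theta_*)$, where $\theta_*=(\xi_*k)^{q-p+1}(\frac34 r)^{sp}$.

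\emph{Step 3 (locating the final interval).} This is the main obstacle. The time $\tilde t$ is not known precisely; we only know it lies in a window of length about $\delta k_1^{q-p+1}r^{sp}$, which is much shorter than $\theta_*$ because $\xi_*\ll\tilde\epsilon$ and $q<p-1$. I would exploit exactly this. First choose $\epsilon$ (hence $\xi_*$) so small that $\delta k_1^{q-p+1}r^{sp}\leqslant \eta_0\,\delta_1(\xi_*k)^{q-p+1}r^{sp}$ for a tiny $\eta_0$. The uncertainty in $\tilde t$ is then negligible on the scale $\theta_*$, and the positivity interval produced in Step 2 covers a fixed fraction of $(t_0,t_0+\delta_1(\xi_*k)^{q-p+1}r^{sp})$. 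To obtain the prescribed interval $(t_0+\frac34\delta_1\theta,t_0+\delta_1\theta)$ exactly, I would replace the single application by propagation from the first positive slab. Lemma~\ref{lemDGdata1}, applied repeatedly from the pointwise bound $u\geqslant\frac14\xi_*k$ on a time slice, extends positivity forward in steps of length $\nu_0(\frac14\xi_*k)^{q-p+1}r^{sp}$; after a bounded number of steps it reaches $t_0+\delta_1\theta$. Each step halves the ball. This is harmless if we start on $B_r$: we may run Steps 1–2 on $B_{2r}$ by enlarging, since $B_{4r}\times(\cdots)\subset\mathcal Q$. The slight loss in level would be absorbed by redefining $\xi_*$. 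Alternatively, one can adjust $\delta_1$ so that the endpoints match.

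The remaining work is bookkeeping. We must check that all these cylinders lie in $B_{4r}\times(t_0,t_0+\delta_1(\xi_*k)^{q-p+1}r^{sp})$, and that the tail hypotheses of every lemma follow from \eqref{eqExpDeg2} once $\epsilon_*$ is chosen last.
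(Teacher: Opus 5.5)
Your proposal is correct and follows the paper's proof: Lemma~\ref{lemPropagation}, then Lemma~\ref{lemShrinking} at the level $\tilde\epsilon k$, then Lemma~\ref{lemDGmeasuredata} at the level $\xi_*k$. As in the paper, $\xi_*$ is made small using $q<p-1$ so that the window containing $\tilde t$ has length at most $\tfrac14\delta_1(\xi_*k)^{q-p+1}r^{sp}$; this is the paper's condition $\xi_*\leqslant(\delta_1/(4\delta))^{1/(p-1-q)}$, i.e.\ your $\eta_0=\tfrac14$. With that choice, and $\delta_1$ renamed to absorb $(\tfrac34)^{sp}$, the interval $(\tilde t+\tfrac12\delta_1\Theta,\tilde t+\delta_1\Theta)$ with $\Theta=(\xi_*k)^{q-p+1}r^{sp}$ already contains $(t_0+\tfrac34\delta_1\Theta,t_0+\delta_1\Theta)$ because $t_0<\tilde t<t_0+\tfrac14\delta_1\Theta$, so the forward iteration with Lemma~\ref{lemDGdata1} is unnecessary (and would cost a factor $4$ in level and $2$ in radius per step).
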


\begin{proof}
The lemma on propagation of positivity in measure, Lemma \ref{lemPropagation}, ensures the existence of $\xi$, $\delta \in (0, 1)$ depending only on the data, such that
\begin{equation}\label{eqExpDeg4}
|B_{\frac{3}{4} r}(x_0)\cap\big\{u(\cdot, t)\geqslant \xi\, k\big\}|\geqslant \frac{1}{4}\alpha|B_{\frac{3}{4} r}(x_0)|,
\quad \text{for all}\quad t\in (t_0, t_0+\eta),\quad \eta:=\delta k^{q-p+1} r^{sp},
\end{equation}
provided that $\epsilon_*\leqslant \delta$.
The measure shrinking lemma, Lemma \ref{lemShrinking} implies that
$$\big|B_{\frac{3}{4} r}(x_0)\cap\big\{u(\cdot, \tilde{t})\leqslant \xi_*\, k\big\}\big|\leqslant \frac{\gamma \xi^q_*}{\xi^{p-1-q}\delta \alpha}|B_{\frac{3}{4} r}(x_0)|,$$
for some time level $\tilde{t}\in (t_0, t_0+\delta k^{q-p+1} r^{sp})$.  Now we use De Giorgi-type lemma involving the measure data, Lemma \ref{lemDGmeasuredata}, which yields 
$$u\geqslant \frac{1}{2}\xi_* k,\quad \text{in}\quad B_{\frac{1}{2} r}(x_0)\times (\tilde{t}+\frac{1}{2}\delta_1 (\xi_* k)^{q-p+1} r^{sp}, \tilde{t}+\delta_1 (\xi_* k)^{q-p+1} r^{sp}),$$
with some $\xi_*$, $\delta_1 \in (0, 1)$ depending only on the data,
provided that 
$$\frac{\gamma \xi^q_*}{\xi^{p-1-q}\delta \alpha} \leqslant \nu_1,\quad \text{and}\quad \epsilon_*\leqslant \frac{1}{\gamma}\xi_*,$$ 
with  $\nu_1 \in (0, 1)$ depending only on the data. Choosing additionally $\xi_*$ from the condition
$$\xi_*\leqslant \Big(\frac{\delta_1}{4 \delta}\Big)^{\frac{1}{p-1-q}},$$
we get
$$(\tilde{t}+\frac{1}{2}\delta_1 (\xi_* k)^{q-p+1} r^{sp}, \tilde{t}+\delta_1 (\xi_* k)^{q-p+1} r^{sp})\supset
(t_0+\frac{3}{4}\delta_1 (\xi_* k)^{q-p+1} r^{sp}, t_0+\delta_1 (\xi_* k)^{q-p+1} r^{sp}).$$
This completes the proof of the proposition.
\end{proof}

\section{Proof of Proposition \ref{pr1.1}}\label{sec:prop1.1}
We recall the definition of $\mathcal{I}$
\begin{equation}\label{eqPR1}
\mathcal{I}\leqslant \bigg(\int\limits^{\bar{t}}_{\bar{t}-\theta}\int\limits_{\mathbb{R}^N}\frac{\big[u_+(x, t)\big]^{p-1}}{\big(\rho+|x-\bar{x}|\big)^{N+sp}}\,dx dt\bigg)^{\frac{1}{q}},\quad \theta=\mathcal{I}^{q-p+1} \rho^{sp}.
\end{equation}
To prove Proposition \ref{pr1.1} we use the measures shrinking lemma, Lemma \ref{lemShrinking1} with $k$ replaced by $\mathcal{I}$, which by \eqref{eqPR1} yields, for every $\epsilon \in (0, 1/2)$ that
\begin{equation*}
\big|B_{\frac{3}{4} \rho}(\bar{x})\cap\big\{u(\cdot, \tilde{t})\leqslant \frac{1}{2} \epsilon \mathcal{I}\big\}\big|\leqslant \gamma\,\epsilon^{q}|B_{\frac{3}{4} \rho}(\bar{x})|,
\end{equation*}
for some time level $\tilde{t}\in (\bar{t}-\theta, \bar{t})$, provided that
$$\operatorname{Tail}(u_-, Q_{\rho, \bar{C}_*\theta}(\bar{x}, \bar{t}))\leqslant \frac{1}{\gamma}\epsilon\,\mathcal{I},$$
with constant $C_*> 1$ to be defined. Choose $\epsilon $ according to the condition $\gamma \epsilon^q \leqslant \nu_1$, where $\nu_1 \in (0, 1)$ is the number whose existence is asserted in De Giorgi-type lemma with initial measure condition, Lemma \ref{lemDGmeasuredata}.
By this lemma, we obtain
$$u\geqslant \frac{1}{4}\epsilon \mathcal{I},\quad \text{in}\quad B_{\frac{1}{2}\rho}(\bar{x})\times (\tilde{t}+\frac{1}{2}\delta_1(\epsilon \mathcal{I})^{q-p+1}\rho^{sp}, \tilde{t}+\delta_1 (\epsilon \mathcal{I})^{q-p+1}\rho^{sp}),$$
with some $\delta_1\in (0, 1)$ depending only on the data. Choosing $\epsilon$ smaller if necessary, such that
$$(\tilde{t}+\frac{1}{2}\delta_1(\epsilon \mathcal{I})^{q-p+1}\rho^{sp}, \tilde{t}+\delta_1 (\epsilon \mathcal{I})^{q-p+1}\rho^{sp})\supset(\bar{t}+
\frac{3}{4}\delta_1(\epsilon \mathcal{I})^{q-p+1}\rho^{sp}, \bar{t}+\delta_1 (\epsilon \mathcal{I})^{q-p+1}\rho^{sp}),$$
and fixing $C_*$ by the condition $C_*\geqslant \delta_1\,\epsilon^{q-p+1}$, we arrive at the required \eqref{eqWeakProp3}. This completes the proof of Proposition \ref{pr1.1}.

\section{Proof of Proposition \ref{pr1.2}}\label{sec:prop1.2}

In this section, we prove the upper bound for local sub-solutions to \eqref{eq1.1}-\eqref{eq1.3}, with $u \geqslant 0$ in $\Omega_T$. Throughout this section, we assume that all cylinders introduced below are contained in $\Omega_T$.

For $k >0$, $\sigma \in (0, 1)$, $0< r< R$, and $0< \lambda < \Lambda$ set
$$
\begin{aligned}
r_\sigma&:=(1-\sigma)r+\sigma R,
&\qquad
\bar r_\sigma&:=\frac{r_\sigma+r_{\sigma/2}}{2},
\\
\eta_\sigma&:=(1-\sigma)\lambda+\sigma\Lambda,
&\qquad
\bar\eta_\sigma&:=\frac{\eta_\sigma+\eta_{\sigma/2}}{2},
\end{aligned}
$$
and define the time-dependent levels
$$\begin{aligned}
k_\sigma(t):= k(1-\sigma) -\tilde{C}\,\mathcal{K}(t),\quad \bar{k}_\sigma(t):=\frac{k_\sigma(t)+k_{\sigma/2}(t)}{2},
\end{aligned}$$
where the constant $\tilde{C} >0$ will be chosen later and
$$\mathcal{K}(t):= k^{1-q}
\int\limits^{t}_{\bar{t}-\Lambda}\,\,\,\int\limits_{\mathbb{R}^N\setminus B_R(\bar{x})}\frac{[u_+(x, \tau)]^{p-1}}{|x-\bar{x}|^{N+sp}}dxd\tau.$$
We also set
$$
\begin{aligned}
B_\sigma&:=B_{r_\sigma}(\bar{x}),
&\qquad
\bar{B}_\sigma&:=B_{\bar{r}_\sigma}(\bar{x}),
\\
Q_\sigma&:=B_\sigma\times(\bar{t}-\eta_\sigma, \bar{t}),
&\qquad
\bar{Q}_\sigma&:=\bar{B}_\sigma\times (\bar{t}-\bar{\eta}_\sigma, \bar{t}k),
\end{aligned}
$$
Define the cutoff function $\zeta(x, t):=\zeta_1(x)\zeta_2(t)$, where $\zeta_1(x)\in C^1_0(\bar{B}_\sigma)$ and $\zeta_2(t)\in C^1(\mathbb{R})$ satisfy the following conditions:
$$
0\leqslant \zeta_1(x)\leqslant 1, \quad \zeta_1(x)=1 \quad \text{in}\quad B_{\sigma/2},\quad \text{and}\quad |\nabla \zeta_1(x)|\leqslant\frac{2}{\sigma (R-r)},$$
$$
 0\leqslant \zeta_2(t)\leqslant 1,\quad \zeta_2(t)=
\begin{cases}
0, & t\leqslant  \bar{t}-\bar{\eta}_{\sigma},\\
1, & t\geqslant \bar{t}-\eta_{\sigma/2},
\end{cases} \quad
|\zeta'_2(t)|\leqslant \frac{2}{\sigma (\Lambda-\lambda)}.
$$
We further assume that
\begin{equation}\label{eqUpper1}
k\geqslant 4 \tilde{C}\,\mathcal{K}(\bar{t}),\quad \text{i.e.,}\quad  k\geqslant \bigg(4 \tilde{C}\int\limits^{\bar{t}}_{\bar{t}-\Lambda}\,\,\,\int\limits_{\mathbb{R}^N\setminus B_R(\bar{x})}\frac{[u_+(x, \tau)]^{p-1}}{|x-\bar{x}|^{N+sp}}dxd\tau\bigg)^{\frac{1}{q}}.
\end{equation} 
We apply the energy estimate \eqref{eqenergy} to the function  $u^+_{\bar{k}_\sigma(t)}=(u-\bar{k}_\sigma(t))_+$  and obtain
\begin{equation}\label{eqUpper2}
\begin{aligned}
 &\sup\limits_{\bar{t}-\bar{\eta}_{\sigma}\leqslant \tau \leqslant \bar{t}}\int\limits_{\bar{B}_\sigma\times\{\tau\}}\,\,\int\limits^u_{\bar{k}_\sigma(\tau)}
 z^{q-1}(z-\bar{k}_\sigma(\tau))\,dz\zeta^p\,dx\\&+\int\limits^{\bar{t}}_{\bar{t}-\bar{\eta}_\sigma}\,\,\int\limits_{\bar{B}_\sigma}
 \int\limits_{\bar{B}_\sigma}
 \frac{|u^+_{\bar{k}_\sigma(\tau)}(x, \tau) -u^+_{\bar{k}_\sigma(\tau)}(y, \tau)|^p}{|x-y|^{N+sp}}\min[\zeta(x, \tau), \zeta(y, \tau)]^p\,dxdy\,d\tau
 \\&\leqslant -\tilde{C}^q\,\iint\limits_{Q_\sigma}\mathcal{K}'(\tau)\,\int\limits^u_{\bar{k}_\sigma(\tau)}z^{q-1}\,
 dz\zeta^p\,dxd\tau+ \gamma \iint\limits_{Q_\sigma}\int\limits^u_{\bar{k}_\sigma(\tau)}
 z^{q-1}(z-\bar{k}_\sigma(\tau))\,dz\zeta^{p-1}\,|\zeta'_\tau|dxd\tau\\&+\gamma \iint\limits_{Q_\sigma}\int\limits_{B_\sigma}\big[u^+_{\bar{k}_\sigma(\tau)}(x, \tau)+u^+_{\bar{k}_\sigma(\tau)}(y, \tau)\big]^p\,\frac{|\zeta(x, \tau)-\zeta(y, \tau)|^p}{|x-y|^{N+sp}}dx dy d\tau\\&+\gamma \iint\limits_{Q_\sigma}
 u^+_{\bar{k}_\sigma(\tau)}(x, \tau)\zeta^p(x, \tau)\bigg(\int\limits_{\mathbb{R}^N\setminus B_\sigma}\frac{\big[u^+_{\bar{k}_\sigma(\tau)}(y, \tau)\big]^{p-1}}{|x-y|^{N+sp}} dy\bigg)dxd\tau=\mathcal{J}_1+\mathcal{J}_2+\mathcal{J}_3+\mathcal{J}_4.
 \end{aligned}
\end{equation} 
On the set $Q_\sigma \cap\big\{u\geqslant \bar{k}_{\sigma}(\cdot)\big\}$, we have
$$u\leqslant \frac{\gamma}{\sigma}(u-k_\sigma(t))_+.$$
Therefore, $\mathcal J_2$ and $\mathcal J_3$ can be estimated as follows:
\begin{equation*}
\mathcal{J}_2 +\mathcal{J}_3 \leqslant \frac{\gamma}{\sigma^{q+1}(\Lambda-\lambda)}\iint\limits_{Q_\sigma}[u^+_{k_\sigma(t)}]^{q+1}\,dxdt+
\frac{\gamma}{\sigma^{p}(R-r)^{sp}}\iint\limits_{Q_\sigma}[u^+_{k_\sigma(t)}]^p\,dx dt.
\end{equation*}
Choosing $k$ such that
\begin{equation}\label{eqUpper3}
k\geqslant \bigg(\frac{(R-r)^{sp}}{\Lambda-\lambda}\bigg)^{\frac{1}{p-1-q}},
\end{equation}
we obtain
\begin{equation*}
\mathcal{J}_2 +\mathcal{J}_3 \leqslant \frac{\gamma}{\sigma^{p}(R-r)^{sp}}\iint\limits_{Q_\sigma}[u^+_{k_\sigma(t)}]^p\,dx dt.
\end{equation*}
Let $\bar{\epsilon}\in(0,1)$ be specified later.  In the case $0< q< 1$, we additionally assume that
\begin{equation}\label{eqUpper4}
k\geqslant \bar{\epsilon} M_1,\quad M_\sigma:=\sup\limits_{Q_\sigma}\,u.
\end{equation}
By the algebraic Lemma~\ref{lemAlg}, we obtain
\begin{equation}\label{eqUpper5}
\begin{cases}
\int\limits^u_{\bar{k}_\sigma(\tau)}z^{q-1}\,dz \geqslant \gamma\,u^+_{\bar{k}_\sigma(\tau)}\Big[u+\bar{k}_\sigma(\tau)\Big]^{q-1}\geqslant \gamma u^+_{\bar{k}_\sigma(\tau)}\,k^{q-1},\quad \text{if}\quad q\geqslant 1,\\
\int\limits^u_{\bar{k}_\sigma(\tau)}z^{q-1}\,dz \geqslant \gamma u^+_{\bar{k}_\sigma(\tau)}\Big[u+\bar{k}_\sigma(\tau)\Big]^{q-1}\geqslant \gamma u^+_{\bar{k}_\sigma(\tau)}\,\Big[M_\sigma+k\Big]^{q-1}\geqslant \gamma \bar{\epsilon}^{1-q}u^+_{\bar{k}_\sigma(\tau)}\,k^{q-1},\,\, \text{if}\,\, 0< q< 1.
\end{cases}
\end{equation}
Therefore, in both cases we have
\begin{equation*}
\begin{aligned}
\mathcal{J}_1&\leqslant -\tilde{C}\bar{\epsilon} \frac{k^{q-1}}{\gamma}\iint\limits_{Q_\sigma}u^+_{\bar{k}_\sigma(\tau)}\,\mathcal{K}'(\tau)\zeta^p\,dxd\tau
\\&=-\frac{\tilde{C}\bar{\epsilon}}{\gamma}\iint\limits_{Q_\sigma}u^+_{\bar{k}_\sigma(\tau)}(x, \tau)\,\zeta^p(x, \tau)\bigg(\int\limits_{\mathbb{R}^N\setminus B_R(\bar{x})}\frac{[u_+(y, \tau)]^{p-1}}{|y-\bar{x}|^{N+sp}}\,dy\bigg)\,dxd\tau.
\end{aligned}
\end{equation*}
Finally, using the facts that
$$|y-x|\geqslant \frac{R-\bar{r}_\sigma}{R} |y-\bar{x}|\geqslant \frac{(1-\frac{3}{4}\sigma)(R-r)}{R}|y-\bar{x}|\geqslant \frac{1}{4}\frac{R-r}{R}|y-\bar{x}|,\,\, x\in \bar{B}_\sigma,\,\, y\in \mathbb{R}^N\setminus B_R(\bar{x}),$$
and
$$|y-x|\geqslant r_\sigma-\bar{r}_\sigma= \frac{1}{4}\sigma\,(R-r),\quad x\in \bar{B}_\sigma,\quad y\in \mathbb{R}^N\setminus B_\sigma,$$
we obtain
\begin{equation*}
\begin{aligned}
\mathcal{J}_4&\leqslant\gamma \iint\limits_{Q_\sigma}
 u^+_{\bar{k}_\sigma(\tau)}(x, \tau)\zeta^p(x, \tau)\bigg(\sup\limits_{x\in \bar{B}_\sigma}\int\limits_{\mathbb{R}^N\setminus B_R(\bar{x})}\frac{\big[u^+_{\bar{k}_\sigma(\tau)}(y, \tau)\big]^{p-1}}{|x-y|^{N+sp}} dy\bigg)dxd\tau\\&+\gamma \iint\limits_{Q_\sigma}
 u^+_{\bar{k}_\sigma(\tau)}(x, \tau)\zeta^p(x, \tau)\bigg(\sup\limits_{x\in \bar{B}_\sigma}\int\limits_{B_R(\bar{x})\setminus B_\sigma}\frac{\big[u^+_{\bar{k}_\sigma(\tau)}(y, \tau)\big]^{p-1}}{|x-y|^{N+sp}} dy\bigg)dxd\tau\\
&\leqslant
 \gamma \Big(\frac{R}{R-r}\Big)^{N+sp}\iint\limits_{Q_\sigma}
 u^+_{\bar{k}_\sigma(\tau)}(x, \tau)\zeta^p(x, \tau)\bigg(\frac{k^{p-1}}{R^{sp}}+\int\limits_{\mathbb{R}^N\setminus B_R(\bar{x})}\frac{\big[u_+(y, \tau)\big]^{p-1}}{|y-\bar{x}|^{N+sp}} dy\bigg)dxd\tau\\&+\frac{\gamma M^{p-1}_1 R^N}{(\sigma(R-r))^{N+sp}}\iint\limits_{Q_\sigma}
 u^+_{\bar{k}_\sigma(\tau)}(x, \tau)dxd\tau\\&\leqslant \gamma \Big(\frac{R}{R-r}\Big)^{N+sp}\iint\limits_{Q_\sigma}
 u^+_{\bar{k}_\sigma(\tau)}(x, \tau)\zeta^p(x, \tau)\bigg(\int\limits_{\mathbb{R}^N\setminus B_R(\bar{x})}\frac{\big[u_+(y, \tau)\big]^{p-1}}{|y-\bar{x}|^{N+sp}} dy\bigg)dxd\tau\\&+ \gamma \frac{\bar{\epsilon}^{1-p}}{\sigma^{N+sp}} \Big(\frac{R}{R-r}\Big)^{N+sp} (R-r)^{-sp} k^{p-1}\iint\limits_{Q_\sigma}
 u^+_{\bar{k}_\sigma(\tau)}(x, \tau)dxd\tau\\&\leqslant \gamma \Big(\frac{R}{R-r}\Big)^{N+sp}\iint\limits_{Q_\sigma}
 u^+_{\bar{k}_\sigma(\tau)}(x, \tau)\zeta^p(x, \tau)\bigg(\int\limits_{\mathbb{R}^N\setminus B_R(\bar{x})}\frac{\big[u_+(y, \tau)\big]^{p-1}}{|y-\bar{x}|^{N+sp}} dy\bigg)dxd\tau\\&+ \gamma \frac{\bar{\epsilon}^{1-p}(R-r)^{-sp}}{\sigma^{N+(s+1)p-1}} \Big(\frac{R}{R-r}\Big)^{N+sp} \iint\limits_{Q_\sigma}
 [u^+_{k_\sigma(\tau)}]^pdxd\tau.
 \end{aligned}
\end{equation*}
The first term on the right-hand side of this inequality is cancelled by the right-hand side of the estimate for $\mathcal{J}_1$, provided that $\tilde{C}$ is chosen to satisfy
 $\frac{\tilde{C} \bar{\epsilon}}{\gamma} = \gamma \big(\frac{R}{R-r}\big)^{N+sp}$.

Next, we estimate the left-hand side of \eqref{eqUpper2}. Using
\eqref{eqUpper5}
\begin{equation*}
\begin{aligned}
\int\limits^{u}_{\bar{k}_\sigma(\tau)}z^{q-1}(z-\bar{k}_\sigma(\tau)) dz&\geqslant \int\limits^{u}_{\frac{1}{2}(\bar{k}_\sigma(\tau)+u)}z^{q-1}(z-\bar{k}_\sigma(\tau))\,dz
\\&\geqslant \frac{1}{2}(u-\bar{k}_\sigma(\tau))\,\int\limits^{u}_{\frac{1}{2}(\bar{k}_\sigma(\tau)+u)}z^{q-1}\,dz
\geqslant \frac{\bar{\epsilon}}{\gamma}k^{q-1}\,\big[u^+_{\bar{k}_\sigma(\tau)}\big]^2.
\end{aligned}
\end{equation*}
Therefore, collecting the previous estimates, inequality \eqref{eqUpper2} can be rewritten as
\begin{equation}\label{eqUpper6}
\begin{aligned}
&k^{q-1}\,\sup\limits_{\bar{t}-\bar{\eta}_{\sigma}\leqslant \tau \leqslant \bar{t}}\,\,\,\int\limits_{\bar{B}_\sigma\times\{\tau\}}\big[u^+_{\bar{k}_\sigma(\tau)}\big]^2\,\zeta^p\,dx\\&+
\int\limits^{\bar{t}}_{\bar{t}-\bar{\eta}_\sigma}\,\,\int\limits_{\bar{B}_\sigma}\int\limits_{\bar{B}_\sigma}
 \frac{|u^+_{\bar{k}_\sigma(\tau)}(x, \tau) -u^+_{\bar{k}_\sigma(\tau)}(y, \tau)|^p}{|x-y|^{N+sp}}\min[\zeta(x, \tau), \zeta(y, \tau)]^p\,dxdy\,d\tau
\\&\leqslant \frac{\gamma}{\sigma^\gamma \bar{\epsilon}^\gamma}\,(R-r)^{-sp} \Big(\frac{R}{R-r}\Big)^{N+sp} \iint\limits_{Q_\sigma}
 [u^+_{k_\sigma(\tau)}]^p dxd\tau.
 \end{aligned}
\end{equation}
Now, using the Sobolev embedding theorem, see Lemma \ref{lemEmbedding}, we obtain 
\begin{multline*}
\qquad\qquad\qquad\iint\limits_{Q_{\sigma/2}}[u^+_{k_{\sigma/2}(\tau)}]^p dxd\tau\leqslant
\Big(\iint\limits_{\bar{Q}_\sigma}\big[u^+_{\bar{k}_\sigma(\tau)}(x, \tau)\big]^{p(1+\frac{2s}{N})}[\zeta(x, \tau)]^{p(1+\frac{sp}{N})}\,dxd\tau\Big)^{\frac{N}{N+2s}}\\\times \big|Q_\sigma \cap\big\{u\geqslant k_{\sigma/2}(\cdot)\big\}\big|^{\frac{2s}{N+2s}}\leqslant  \gamma \bigg(\sup\limits_{\bar{t}-\bar{\eta}_{\sigma}\leqslant \tau \leqslant \bar{t}}\int\limits_{\bar{B}_\sigma\times\{\tau\}}\big[u^+_{\bar{k}_\sigma(\tau)}\big]^2\,\zeta^p\,dx\bigg)^{\frac{sp}{N+2s}}\\\times
\bigg(\int\limits^{\bar{t}}_{\bar{t}-\bar{\eta}_\sigma}\int\limits_{\bar{B}_\sigma}\int\limits_{\bar{B}_\sigma}
 \frac{|u^+_{\bar{k}_\sigma(\tau)}(x, \tau)\zeta(x, \tau) -u^+_{\bar{k}_\sigma(\tau)}(y, \tau)\zeta(y,\tau)|^p}{|x-y|^{N+sp}}\,dxdyd\tau
+\frac{\gamma}{\sigma^\gamma(R-r)^{sp}}\iint\limits_{\bar{Q}_\sigma}[u^+_{\bar{k}_\sigma(\tau)}]^p dxd\tau \bigg)^{\frac{N}{N+2s}}\\\times \big|Q_\sigma \cap\big\{u\geqslant k_{\sigma/2}(\cdot)\big\}\big|^{\frac{2s}{N+2s}}.
\end{multline*}
To estimate the nonlocal term in the last inequality, we use
\begin{multline*}
|u^+_{\bar{k}_\sigma(\tau)}(x, \tau)\zeta(x, \tau) -u^+_{\bar{k}_\sigma(\tau)}(y, \tau)\zeta(y,\tau)|^p\leqslant\\\leqslant
\gamma|u^+_{\bar{k}_\sigma(\tau)}(x, \tau) -u^+_{\bar{k}_\sigma(\tau)}(y, \tau)|^p\zeta^p(x, \tau)+\gamma |\zeta(x, \tau) -\zeta(y,\tau)|^p[u^+_{\bar{k}_\sigma(\tau)}(x, \tau)]^p.
\end{multline*}
Substituting this estimate into the preceding inequality, applying \eqref{eqUpper6}, and setting
\[
\mathbf{Y}(\sigma):=
\iint\limits_{Q_{\sigma}}[u^+_{k_{\sigma}(\tau)}]^pdxd\tau,
\]
we arrive at
\begin{equation*}
\mathbf{Y}(\sigma/2)\leqslant \frac{\gamma}{\bar{\epsilon}^\gamma\sigma^\gamma}
k^{-\frac{sp(q+1)}{N+2s}} (R-r)^{-\frac{sp(N+sp)}{N+2s}} \Big(\frac{R}{R-r}\Big)^{\frac{(N+sp)^2}{N+2s}}\big[\mathbf{Y}(\sigma)\big]^{1+\frac{sp}{N+2s}}.
\end{equation*}
Repeating the iteration from Lemma~\ref{lemDG}, with
$\sigma_j:=2^{1-j},\,\, Y_j:=\mathbf{Y}(\sigma_j),  j=1,2,\ldots$, we obtain $Y_j\to0$ as $j\to\infty$, provided that
$$Y_1=\mathbf{Y}(1)\leqslant 
\frac{\bar{\epsilon}^\gamma}{\gamma}\,k^{q+1}(R-r)^{N+sp}\Big(\frac{R-r}{R}\Big)^{\frac{(N+sp)^2}{sp}},
$$
which, together with our choices \eqref{eqUpper1}, \eqref{eqUpper3} and \eqref{eqUpper4}, yields
\begin{equation*}
\begin{aligned}
M_{0}\leqslant& \,\bar{\epsilon}\,M_1+\frac{\gamma}{ \bar{\epsilon}^\gamma}\bigg((R-r)^{-N-sp}\Big(\frac{R}{R-r}\Big)^{\frac{(N+sp)^2}{sp}}\iint\limits_{Q_1} u^p\,dxd\tau\bigg)^{\frac{1}{q+1}}+
\gamma \Big(\frac{(R-r)^{sp}}{\Lambda-\lambda}\Big)^{\frac{1}{p-1-q}}\\&+\frac{\gamma}{\bar{\epsilon}^\gamma} \bigg( \Big(\frac{R}{R-r}\Big)^{N+sp}\int\limits^{\bar{t}}_{\bar{t}-\Lambda}\,\,\,\int\limits_{\mathbb{R}^N\setminus B_R(\bar{x})}\frac{[u_+(x, \tau)]^{p-1}}{|x-\bar{x}|^{N+sp}}dxd\tau\bigg)^{\frac{1}{q}}.
\end{aligned}
\end{equation*}
Using Young's inequality, we arrive at
\begin{equation}\label{eqUpper7}
\begin{aligned}
M_{0}\leqslant&\, \bar{\epsilon}\,M_1+\frac{\gamma}{ \bar{\epsilon}^\gamma}\bigg((R-r)^{-N-sp}\Big(\frac{R}{R-r}\Big)^{\frac{(N+sp)^2}{sp}}\iint\limits_{Q_1} u^{p-1}\,dxd\tau\bigg)^{\frac{1}{q}}+
\gamma \Big(\frac{(R-r)^{sp}}{\Lambda-\lambda}\Big)^{\frac{1}{p-1-q}}\\&+\frac{\gamma}{\bar{\epsilon}^\gamma} \bigg( \Big(\frac{R}{R-r}\Big)^{N+sp}\int\limits^{\bar{t}}_{\bar{t}-\Lambda}\,\,\,\int\limits_{\mathbb{R}^N\setminus B_R(\bar{x})}\frac{[u_+(x, \tau)]^{p-1}}{|x-\bar{x}|^{N+sp}}dxd\tau\bigg)^{\frac{1}{q}}.
\end{aligned}
\end{equation}
Next, we use \eqref{eqUpper7} in an interpolation argument.
For fixed $\rho$, $\theta>0$ and $\sigma \in (0, 1)$, choose $r$ and $\lambda$ such that $\frac{1}{2}\rho \leqslant r < r(1+\sigma) \leqslant \rho$ and $\frac{1}{2} \theta \leqslant \lambda < \lambda (1+\sigma)\leqslant \theta$.
Applying \eqref{eqUpper7} with $R=r(1+\sigma)$ and $\Lambda=\lambda(1+\sigma)$ yields
\begin{equation*}
\begin{aligned}
\sup\limits_{Q^-_{r, \lambda}(\bar{x}, \bar{t})} u&\leqslant \bar{\epsilon}\,\sup\limits_{Q^-_{r(1+\sigma), \lambda(1+\sigma)}(\bar{x}, \bar{t})} u+\frac{\gamma}{ \bar{\epsilon}^\gamma \sigma^\gamma}\bigg[\bigg(\rho^{-N-sp}\iint\limits_{Q^-_{\rho, \theta}(\bar{x}, \bar{t})} u^{p-1}\,dxd\tau\bigg)^{\frac{1}{q}}+
 \Big(\frac{\rho^{sp}}{\theta}\Big)^{\frac{1}{p-1-q}}\\&+ \bigg( \int\limits^{\bar{t}}_{\bar{t}-\theta}\,\,\,\int\limits_{\mathbb{R}^N\setminus B_{\frac{1}{2}\rho}(\bar{x})}\frac{[u_+(x, \tau)]^{p-1}}{|x-\bar{x}|^{N+sp}}dxd\tau\bigg)^{\frac{1}{q}}\bigg].
\end{aligned}
\end{equation*}
By interpolation, we obtain
\begin{equation*}
\begin{aligned}
\sup\limits_{Q^-_{\frac{1}{2}\rho, \frac{1}{2}\theta}(\bar{x}, \bar{t})} u&\leqslant \gamma\bigg[\bigg(\rho^{-N-sp}\iint\limits_{Q^-_{\rho, \theta}(\bar{x}, \bar{t})} u^{p-1}\,dxd\tau\bigg)^{\frac{1}{q}}+ \bigg( \int\limits^{\bar{t}}_{\bar{t}-\theta}\int\limits_{\mathbb{R}^N\setminus B_{\frac{1}{2}\rho}(\bar{x})}\frac{[u_+(x, \tau)]^{p-1}}{|x-\bar{x}|^{N+sp}}dxd\tau\bigg)^{\frac{1}{q}}\\&+
 \Big(\frac{\rho^{sp}}{\theta}\Big)^{\frac{1}{p-1-q}}\bigg]\leqslant \gamma \bigg[\bigg( \int\limits^{\bar{t}}_{\bar{t}-\theta}\,\,\,\int\limits_{\mathbb{R}^N}\frac{[u_+(x, \tau)]^{p-1}}{(\rho+|x-\bar{x}|)^{N+sp}}dxd\tau\bigg)^{\frac{1}{q}}+
 \Big(\frac{\rho^{sp}}{\theta}\Big)^{\frac{1}{p-1-q}}\bigg].
\end{aligned}
\end{equation*}
This completes the proof of Proposition \ref{pr1.2}.

\section{Proof of Theorem \ref{th1.1}}\label{sec:mainProof}

We first prove the right-hand inequality in \eqref{eqHarnack2} of Theorem~\ref{th1.1}.  We apply Proposition~\ref{pr1.2} in the cylinder
$$Q^-_{\rho, \theta}(x_0, t_0)=B_\rho(x_0)\times(t_0-\theta, t_0),\qquad \theta=\Big(\frac{1}{2\bar{C}_3} u(x_0, t_0)\Big)^{q-p+1}\rho^{sp},$$
where $\bar{C}_3 >0$ is the constant from Proposition \ref{pr1.2}. This gives
\begin{equation}\label{eq6.1}
u(x_0, t_0)\leqslant \sup\limits_{Q^-_{\frac{1}{2}\rho, \frac{1}{2}\theta}(x_0, t_0)} u\leqslant 2\bar{C}_3  \bigg( \int\limits^{t_0}_{t_0-\theta}\,\,\int\limits_{\mathbb{R}^N}\frac{[u_+(x, \tau)]^{p-1}}{(\rho+|x-x_0|)^{N+sp}}dxd\tau\bigg)^{\frac{1}{q}}, 
\end{equation}
which yields the right-hand side of  \eqref{eqHarnack2}. To prove the right-hand side of \eqref{eqHarnack1}, we apply Proposition \ref{pr1.1}. In view of \eqref{eq6.1}, we set
$$
\mathcal I:=\frac{1}{2\bar{C}_3}u(x_0,t_0).
$$
Then
$$ \mathcal{I}\leqslant \bigg( \int\limits^{t_0}_{t_0-\theta}\,\,\int\limits_{\mathbb{R}^N}\frac{[u_+(x, \tau)]^{p-1}}{(\rho+|x-x_0|)^{N+sp}}dxd\tau\bigg)^{\frac{1}{q}},\quad \theta=\mathcal{I}^{q-p+1} \rho^{sp}.$$
Applying \eqref{eqWeakProp3} from Proposition \ref{pr1.1}, we arrive at the required  inequality  \eqref{eqHarnack1}, namely,
\begin{equation}\label{eq6.2}
\frac{1}{2\bar{C}_3}u(x_0,t_0)=\mathcal{I}\leqslant \bar{C}_2\,\inf\limits_{B_\rho(x_0)} u(\cdot, t_0+\bar{C}_* \Big(\frac{1}{2\bar{C}_3}u(x_0, t_0)\Big)^{q-p+1}\rho^{sp}),
\end{equation}
provided that
\begin{equation*}
\left(
\int\limits_{t_0-4\rho^{sp}\bar{C}_*(\frac{1}{2\bar{C}_3}u(x_0, t_0))^{q-p+1} }^{t_0+4\rho^{sp}\bar{C}_*(\frac{1}{2\bar{C}_3}u(x_0, t_0))^{q-p+1}}
\,\,\int\limits_{\mathbb{R}^N\setminus B_\rho(x_0)}
\frac{[u_-(x,t)]^{p-1}}
{|x-x_0|^{N+sp}}
,dx,dt
\right)^{\frac1q}
\leqslant
\frac{1}{2\bar{C}_1\bar{C}_3} u(x_0, t_0).
\end{equation*}
To prove the left-hand inequality in \eqref{eqHarnack1}, we use the results of \cite[Chapter 5, Section 3]{DiBGiaVes2012}, which require only the continuity of the solution and the right-hand inequality in \eqref{eqHarnack1}, established above. We leave the details of the proof to the reader.

To establish the left-hand inequality in \eqref{eqHarnack2}, we apply inequality \eqref{eqShr8} from the shrinking Lemma \ref{lemShrinking1}  with $\frac{1}{2} \epsilon k=C_2 u(x_0, t_0)$, in the cylinder $Q^*:=B_\rho(x_0)\times (t_0-C_*\eta, t_0-\frac{1}{2} C_* \eta)$, where
$\eta:=[u(x_0, t_0)]^{q-p+1}\rho^{sp}$  and $C_2$, $C_*> 0$ are the constants defined in \eqref{eqHarnack1} of Theorem \ref{th1.1}. We obtain
\begin{equation*}
\inf\limits_{t\in (t_0-C_*\eta, t_0-\frac{1}{2}C_*\eta)} \big|B_{\rho}(x_0)\cap\big\{u(\cdot, t)\leqslant C_2 u(x_0, t_0)\big\}\big| \leqslant\frac{\gamma(C_2) [u(x_0, t_0)]^q}{\int\limits^{t_0-\frac{1}{2}C_* \eta}_{t_0-C_*\eta}\int\limits_{\mathbb{R}^N}\frac{[u_+(y, \tau)]^{p-1}}{(\rho+|x_0-y|)^{N+sp}}dy d\tau}|B_\rho(x_0)|.
\end{equation*}
By the left-hand inequality in \eqref{eqHarnack1}, we have
$$
\inf\limits_{t\in (t_0-C_*\eta, t_0-\frac{1}{2}C_*\eta)} \big|B_{\rho}(x_0)\cap\big\{u(\cdot, t)\leqslant C_2 u(x_0, t_0)\big\}\big|=|B_\rho(x_0)|.
$$
Combining this with the preceding estimate, we obtain
$$
\int\limits^{t_0-\frac{1}{2}C_* \eta}_{t_0-C_*\eta}\int\limits_{\mathbb{R}^N}\frac{[u_+(y, \tau)]^{p-1}}{(\rho+|x_0-y|)^{N+sp}}dy d\tau\leqslant \gamma [u(x_0, t_0)]^q.$$
This completes the proof of Theorem \ref{th1.1}.

\section*{Acknowledgements} 
Igor Skrypnik and Mariia Savchenko were supported by the National
Research Foundation of Ukraine, project No. 2025.07/0369 “Qualitative methods of nonlinear
analysis of heterogeneous structures.”


\bibliographystyle{abbrvurl}
\bibliography{ref}

\end{document}